\newsavebox\myboxA
\newsavebox\myboxB
\newlength\mylenA
\newcommand*\xoverline[2][0.75]{%
    \sbox{\myboxA}{$\m@th#2$}%
    \setbox\myboxB\null% Phantom box
    \ht\myboxB=\ht\myboxA%
    \dp\myboxB=\dp\myboxA%
    \wd\myboxB=#1\wd\myboxA% Scale phantom
    \sbox\myboxB{$\m@th\overline{\copy\myboxB}$}%  Overlined phantom
    \setlength\mylenA{\the\wd\myboxA}%   calc width diff
    \addtolength\mylenA{-\the\wd\myboxB}%
    \ifdim\wd\myboxB<\wd\myboxA%
       \rlap{\hskip 0.5\mylenA\usebox\myboxB}{\usebox\myboxA}%
    \else
        \hskip -0.5\mylenA\rlap{\usebox\myboxA}{\hskip 0.5\mylenA\usebox\myboxB}%
    \fi}
\newcommand{\arXiv}[1]{\href{https://arxiv.org/abs/#1}{arXiv:#1}}
\newcommand{\FF}{{\cal F}}
\newcommand{\DD}{{\cal D}}
\newcommand{\PP}{{\cal P}}
\newcommand{\eps}{{\varepsilon}}
\newcommand{\ch}{{\mbox{\rm ch}}}
\newcommand{\myth}{{\mbox{\rm th}}}
\newcommand{\e}{\mathbb{E}}
\newcommand{\p}{\mathbb{P}}
\newcommand{\Reals}{\mathbb{R}}
\newcommand{\la}{\langle}
\newcommand{\ra}{\rangle}
\newcommand{\ME}{\mbox{\it ME}}
\newcommand{\MCE}{\mbox{\it MCE}}
\newcommand{\bx}{\mathbf{x}}
\newcommand{\ignore}[1]{\relax}
\newcommand{\Distr}{\stackrel{d}{=}}
\newcommand{\lm}{\lambda}
\newcommand{\G}{\mathbf{G}}
\newcommand{\Gr}{\mathcal{G}}
\newcommand{\sg}{\sigma}
\newcommand{\pos}{\mathrm{Poisson}}
\newcommand{\pr}[1]{\mathbb{P}\left [ #1 \right]}
\newcommand{\E}[1]{\mathbb{E}\left [ #1 \right ]}
\newtheorem{theorem}{Theorem}
\newtheorem{lemma}{Lemma}[section]
\newtheorem{proposition}{Proposition}
\numberwithin{equation}{section}
\author{Wei-Kuo Chen\thanks{School of Mathematics, University of Minnesota. Email: wkchen@umn.edu}
		\and
		David Gamarnik\thanks{Sloan School of Management, MIT. Email: gamarnik@mit.edu}
		\and
		Dmitry Panchenko\thanks{Department of Mathematics, University of Toronto. Email: panchenk@math.toronto.edu}
		\and
		Mustazee Rahman\thanks{Department of Mathematics, MIT. Email: mustazee@mit.edu}
		}
\title{
Suboptimality of local algorithms for a class of max-cut problems
}
\date{}
\begin{document}

\maketitle

\begin{abstract}
 We show that in random $K$-uniform hypergraphs of constant average degree, for even $K \geq 4$, local algorithms 
 defined as factors of i.i.d. can not find nearly maximal cuts when the average degree is sufficiently large. These algorithms have been used frequently to obtain lower bounds for the max-cut problem on random graphs, but it was not known whether they could be successful in finding nearly maximal cuts. This result follows from the fact that the overlap of any two nearly maximal cuts in such hypergraphs does not take values in a certain non-trivial interval -- a phenomenon referred to as the overlap gap property -- which is proved by comparing diluted models with large average degree with appropriate fully connected spin glass models, and showing the overlap gap property in the latter setting.
\end{abstract}
\vspace{0.5cm}
\emph{Key words}: local algorithms, maximum cut problems, spin glasses.\\
\emph{AMS 2010 subject classification}: 05C80, 60F10, 60G15, 60K35, 68W20, 82B44

\section{Introduction}\label{sec:intro}
This paper considers the problem of {algorithmically} finding nearly optimal spin configurations in the diluted $K$-spin model.
{We specifically focus on local algorithms defined as factors of i.i.d., the formal definition of which is provided in Section~\ref{sec:localg}}. The diluted $K$-spin model is also known as the max-cut problem for $K$-uniform Erd\H{o}s-R\'{e}nyi hypergraphs of constant average degree, and also as the random $K$-XORSAT model. The problem is only interesting for even $K$ and we prove that, for even $K \geq 4$, local algorithms fail to find the nearly optimal spin configurations (maximal cuts) once the average degree is large enough.

The proof is based on finding a structural constraint for the overlap of any two nearly optimal spin configurations -- the \emph{overlap gap property} -- that goes against certain properties of local algorithms. For $K=2$, the overlap gap property is not expected to hold, which is why this case is excluded. The structural constraint is derived from recent results on the mean field $K$-spin spin glass models, in particular, the Parisi formula and the Guerra-Talagrand replica symmetry breaking bound at zero temperature. We begin with a discussion of the model and the notion of algorithms that we use.

\paragraph{The $K$-spin model}
The set of $\pm 1$ spin configurations on $N$ vertices will be denoted by
$$\Sigma^N = \{-1,1\}^N.$$
Given any $K$-uniform hypergraph $G = ([N], E(G))$ on $N$ vertices, a spin configuration $\sg \in \Sigma^N$ and an 
edge $e = (v_1, \ldots, v_K) \in E(G)$, let us introduce the notation
\begin{equation} \label{eqn:theta} 
\theta(\sg,e) = - \, \sg_{v_1} \cdots \sg_{v_K}\,.
\end{equation}
The $K$-spin Hamiltonian of $G$ is the function $H_G: \Sigma^N \to \mathbb{R}$ defined by
\begin{equation} \label{eqn:Hgraph}
H_G(\sg) = \sum_{e \in E(G) }\, \theta(\sg,e).
\end{equation}
The \emph{cut density} of $\sg$ is $H_G(\sg)/N$. We consider directed hypergraphs and allow the same edge to appear multiple times, as well as a vertex to appear in an edge multiple times. In the case of the sparse Erd\H{o}s-R\'{e}nyi hypergraphs, which will be our main concern, this happens only for an order one number of edges and vertices with high probability as the graph size tends to infinity.

The objective of the $K$-spin model for a given graph $G$ is to maximize the cut density over all
spin configurations and to find the maximizers or near maximizers. When edges consist of distinct vertices, this objective is trivial if $K$ is odd with the maximum being achieved by the all $(-1)$-valued spin configuration. Therefore, throughout the paper, we will assume that the parameter $K$ is an even integer.

\paragraph{The diluted $K$-spin model}

The $K$-uniform Erd\H{o}s-R\'{e}nyi hypergraph on $N$ vertices with connectivity $\lm$, denoted  as $\mathrm{ER}(K, \lm, N)$, is sampled as follows. First, sample a random number $|E| \sim \pos(\lm N)$. Then for each $1 \leq i \leq |E|$, sample an \emph{edge} $e_i \in [N]^K$ uniformly at random, where $[N] = \{1,\ldots, N\}$. The graph $\G_N \sim \mathrm{ER}(K, \lm, N)$ has vertex set $V(\G_N) = [N]$ and edge set $E(\G_N) = \{e_1, \ldots, e_{|E|}\}$.

Let us consider the $K$-spin Hamiltonian for the graph $\mathrm{ER}(K,\lm,N)$:
\begin{equation} \label{eqn:HgraphTwo}
H_{\lm,N}(\sg) \triangleq H_{\G_N}(\sg) = \sum_{e \in E(\G)}\, \theta(\sg,e).
\end{equation}
This is the Hamiltonian of the so called diluted $K$-spin model.
Using Azuma's inequality it is easy to show that the maximum of $H_{\lm,N}(\sg)/N$ over all $\sg \in \Sigma^N$ is concentrated around its expectation as $N$ increases.
Moreover, the limit 
\begin{align}
M(K,\lm)\triangleq \lim_{N\to\infty} 
\e \max_{\sg \in \Sigma^N}\, \frac{H_{\lm,N}(\sg)}{N}
\end{align}
exists. This is proved for $K=2$ in \cite{BGT}. For even $K\geq 2$, the existence of this limit follows from the same argument as in \cite{DeSanctis}, which proves (for a different model) the existence of the thermodynamic limit of the free energy at positive temperature as a consequence of subadditivity -- obtained using an analogue of the Guerra-Toninelli interpolation \cite{GT} -- which, of course, implies the existence of the limit at zero temperature. The exact computation of $M(K,\lm)$ is however challenging, and, as we will discuss below, the appropriate large $\lm$ limit of $M(K,\lm)$ recovers the famous Parisi formula, which expresses the ground state energy of the $K$-spin Sherrington-Kirkpatrick model.

\paragraph{Local algorithms}

There is considerable interest in devising algorithms that generate spin configurations with provably high cut density for general diluted models.
Hatami et.~al.~\cite{HLS} conjectured that nearly optimal solutions to many such problems can be devised through local decision rules in the sense
of factor of i.i.d.~processes, as we explain below. Much effort has been made to address this question for problems such as perfect matchings
(dimer covers), independent sets (hardcore model), max-cut (spin model), graph colouring (Potts model), etc.
See \cite{BV,EL,FM,HV,HW,Lyons,LN,R} and references therein.

However, this conjecture was disproved in~\cite{GS} for the independent set model by using an approach based on the overlap gap property for
independent sets, similar to the approach used in this paper. The tightest possible result for the same problem
was obtained in \cite{RV} for half-optimal independent sets by looking at overlaps of many independents sets and establishing a multidimensional variant of the overlap gap property. The tightness of the result stems from the fact that less than half-optimal independent sets can be constructed by local
algorithms~\cite{HW}. 
Sub-optimality of a sequential version of a local algorithm was established for the random NAE-K-SAT model in~\cite{GS2}, also by establishing
the overlap gap property for multi-overlaps. The conjecture has remained open for the diluted $K$-spin model, and, in this paper, we prove that for $K \geq 4$ and all large enough $\lm$, local algorithms defined as factors of i.i.d.~can not find spin configurations whose cut density on $\mathrm{ER}(K, \lm, N)$ is arbitrarily close to $M(K,\lm)$.

Let us explain what we mean by local algorithms, specifically, algorithms defined as factors of i.i.d.
A formal definition is provided in the next section.
A \emph{local algorithm} takes as input a graph $G$ and decorates the
vertices of $G$ with i.i.d.~random variables $X(v)$ for $v \in V(G)$. The algorithm then generates a spin configuration $\sg$ by deciding, for each vertex $v$, the value $\sg_v$ as a function ({factor}) of the randomly decorated $r$-neighbourhood of $v$ for some radius $r$ that is independent of the size of $G$ (hence the term ``local"). The decision rule is the same for every vertex and is applied in parallel to all the vertices. The output of the algorithm is then a stochastic process over $G$.

Local algorithms are essentially randomized, distributed algorithms on graphs. They
originate from ergodic theory where they are called factor of i.i.d.~processes. Examples include continuous time
Glauber dynamics run for a bounded time interval \cite{BS2}, as well as the Belief Propagation and Survey Propagation based algorithms
run for a bounded number of iterations. Another well-known example is the R\"{o}dl nibble method for the packing problem.

Interest in local algorithms arose partly because any stationary stochastic process over the Cayley graph of a
finite generated amenable group, such as $\mathbb{Z}^d$, can be approximated in distribution by local algorithms
over the graph; see \cite{Lyons} for a discussion and references. On the other hand, there is a developing theory for
processes that can be modeled by local algorithms over non-amenable graphs and expander graph sequences,
such as random regular graphs or sparse Erd\H{o}s-R\'{e}nyi graphs with large enough connectivity (which behave
like expanders in many ways due to the giant component). In this setting, local algorithms can generate various graph
structures as previously indicated and those constructions provide the best lower bounds to date for many
optimization problems over sparse random graphs. Local algorithms can also approximate (in distribution) the local statistics
of eigenvectors of random regular graphs \cite{BS, HV} and certain stationary Gaussian processes over transitive graphs \cite{BV}.

\paragraph{Main result}
Our main result about sub-optimality of local algorithms is stated informally in Theorem \ref{thm:lafinformal} below
and then precisely in Theorem \ref{thm:laf}, after appropriate terminology is introduced in Section \ref{sec:localg}.

The \emph{mean field}, or fully connected, $K$-spin Hamiltonian is the random function $H_N: \Sigma^N \to \Reals$ given by
\begin{equation} \label{eqn:HSK}
H_N(\sg) = \frac{1}{N^{(K-1)/2}}\, \sum_{(i_1,\ldots, i_K) \in [N]^K}
g_{i_1, \ldots, i_K} \sg_{i_1} \cdots \sg_{i_K},
\end{equation}
where the $g_{i_1, \ldots, i_K}$ are i.i.d.~standard Gaussian random variables. Guerra and Toninelli proved in \cite{GT2} that
\begin{equation} \label{eqn:freeenergy}
\lim_{N \to \infty}
\e \max_{\sg \in \Sigma^N} \frac{H_N(\sg)}{N} \triangleq P(K)
\end{equation}
exists, and the precise description of this limit, given by the so called Parisi formula at zero temperature, will be discussed below and used crucially in the proof of the key results. Using another interpolation of Guerra and Toninelli from \cite{GT} that compares diluted models with large connectivity and fully connected models, Dembo et.~al.~\cite{DMS} (for $K=2$) and Sen \cite{Sen} showed that
\begin{equation} \label{eqn:DMS}
M(K,\lm) = P(K) \lm^{1/2} + O(\lm^{1/3})\;\; \text{as}\; \lm \to \infty.
\end{equation}
In other words, the leading contribution to $M(K,\lm)$ for large $\lambda$ is $P(K) \lm^{1/2}$. We prove the following
hardness of approximation result for $M(K,\lm)$, which will be made formal in Theorem \ref{thm:laf} below after necessary definitions.
\begin{theorem}[Informal] \label{thm:lafinformal}
	For any even $K \geq 4$, there exists $\eta > 0$ such that the following holds for all large enough $\lm$. Suppose $A$ is a local algorithm 
constructed as a factor of i.i.d.~that generates spin configurations on $K$-uniform hypergraphs. Let $\sg_{A} \in \Sigma^N$ be its output on the graph $\mathrm{ER}(K,\lm,N)$. Then, for all large enough $N$,
	$$ 
	\e \frac{H_{\lm,N}(\sg_A)}{N} \leq (P(K) - \eta) \lm^{1/2}.
	$$
\end{theorem}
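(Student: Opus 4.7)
The plan is to prove an overlap gap property (OGP) for near-maximizers of $H_{\lm,N}$ and to show that it is incompatible with any factor-of-i.i.d.\ local algorithm whose output has near-optimal expected cut density. Concretely, I would first prove OGP for the mean field $K$-spin Hamiltonian $H_N$ in \eqref{eqn:HSK} for even $K\ge 4$, transfer it to $H_{\lm,N}$ via a Guerra-Toninelli interpolation of the type underlying \eqref{eqn:DMS}, and then use a two-replica coupling of the algorithm's i.i.d.\ input to force an intermediate overlap forbidden by the transferred OGP.

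\paragraph{OGP in the mean field.} Set $R(\sg^1,\sg^2) = \frac{1}{N}\sum_{i=1}^N \sg^1_i\sg^2_i$. The first target is to establish the existence of $0 < a < b < 1$, $\delta > 0$ and $c > 0$ such that
\[
\p\Bigl(\exists\,\sg^1,\sg^2 \in \Sigma^N\,:\, H_N(\sg^i)/N \ge P(K) - \delta\text{ for } i=1,2\text{ and } |R(\sg^1,\sg^2)|\in(a,b)\Bigr) \le e^{-cN}.
\]
The mechanism is the Guerra-Talagrand two-replica interpolation bound evaluated at zero temperature: for $K\ge 4$ the Parisi minimizer is not replica-symmetric, so constraining the overlap to lie in $(a,b)$ strictly raises the constrained Parisi functional and yields a constrained two-replica ground-state value strictly below $2P(K)$. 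Gaussian concentration of $\max_\sg H_N(\sg)/N$ then converts this into the exponential tail bound above.

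\paragraph{Transfer to the diluted model.} The Guerra-Toninelli interpolation between $H_{\lm,N}/N$ and $\lm^{1/2}H_N/N$ used in \cite{GT,DMS,Sen}, applied jointly to two replicas under the constraint $|R(\sg^1,\sg^2)|\in(a,b)$, transfers the preceding bound to
\[
\max_{|R(\sg^1,\sg^2)|\in(a,b)}\, \frac{H_{\lm,N}(\sg^1) + H_{\lm,N}(\sg^2)}{N} \le (2P(K) - \gamma)\lm^{1/2}
\]
with high probability, for some $\gamma > 0$ and all $\lm$ sufficiently large.

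\paragraph{Defeating local algorithms, and the main obstacle.} Fix $\eta = \gamma/8$ and suppose for contradiction that a local algorithm $A$ of radius $r$ satisfies $\e H_{\lm,N}(\sg_A)/N \ge (P(K)-\eta)\lm^{1/2}$; after multiplying the output by an independent uniform $\varepsilon\in\{\pm 1\}$ we may assume the law of $\sg_A$ is symmetric under $\sg\leftrightarrow-\sg$. Couple two runs by taking i.i.d.\ input fields $X,X'$ and, for $t\in[0,1]$, the interpolated field $X^{(t)}$ that at each vertex independently equals $X$ with probability $t$ and $X'$ with probability $1-t$, and set $\sg^{(t)} = \sg_A(X^{(t)})$, so that $\sg^{(1)} = \sg_A(X)$ and $\sg^{(0)}$ is independent of $\sg_A(X)$. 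The map $t\mapsto\e|R(\sg_A(X),\sg^{(t)})|$ is continuous, equals $1$ at $t=1$, and equals $o(1)$ at $t=0$ by sign symmetry together with bounded-degree concentration of $R$ under independent inputs. Intermediate value and Azuma-type concentration of $R$ (for $r$-local factors on the locally tree-like $\mathrm{ER}(K,\lm,N)$) yield some $t^*$ for which $|R(\sg_A(X),\sg^{(t^*)})|\in(a,b)$ with probability $1-o(1)$, while concentration of $H_{\lm,N}(\sg_A)/N$ around its expectation ensures each of $\sg_A(X)$ and $\sg^{(t^*)}$ has cut density at least $(P(K)-2\eta)\lm^{1/2}$ with probability $1-o(1)$; summing the cut densities gives $(2P(K)-4\eta)\lm^{1/2} > (2P(K)-\gamma)\lm^{1/2}$, contradicting the transferred OGP. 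The main technical difficulty is the Guerra-Talagrand two-replica bound at zero temperature for $K\ge 4$ and its preservation, with a uniform-in-$\lm$ gap, under the Poisson-Gaussian interpolation; the algorithmic side is routine given the bounded-degree structure of $\mathrm{ER}(K,\lm,N)$.
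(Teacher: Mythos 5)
Your overall architecture (mean-field OGP via the zero-temperature Guerra--Talagrand bound, transfer to the diluted model by Guerra--Toninelli interpolation, then an interpolation of the algorithm's i.i.d.\ input combined with the intermediate value theorem) is the same as the paper's. There is, however, one genuine gap, at the $t=0$ endpoint of your continuity argument. You run both replicas on the \emph{same} graph $\G_N$ and claim that $\e|R(\sg_A(X),\sg^{(0)})| = o(1)$ ``by sign symmetry together with bounded-degree concentration of $R$ under independent inputs.'' This is not true in general. Conditionally on $\G_N$, with $X$ and $X'$ independent one has $\e[R\mid \G_N]=\frac1N\sum_i m_i^2$ where $m_i=\e[\sg_A(X)_i\mid\G_N]$ is the \emph{conditional} magnetization at vertex $i$ given the graph; these $m_i$ can be of order one even when the global magnetization vanishes (extreme case: a factor that ignores $X$ entirely and decides $\sg_v$ from the isomorphism type of $B_r(\G_N,v)$, for which $|R|\equiv 1$ at $t=0$). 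The global sign flip $\varepsilon$ does not help, because the OGP is a statement about $|R_{1,2}|$, and $|\varepsilon\varepsilon' R|=|R|$. If $|R|$ at $t=0$ already exceeds $b$, the intermediate value argument need not produce any $t^*$ with $|R|\in(a,b)$, and the contradiction evaporates. One would need to separately rule out large conditional-on-graph correlations for \emph{near-optimal} factors, and no such argument is given.

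The paper's resolution of exactly this issue (flagged explicitly in its introduction) is to couple the \emph{graphs} as well as the labelings: it builds $t$-coupled hypergraphs $\G_{N,t}^1,\G_{N,t}^2$ sharing a $\pos(t\lm N)$ set of common edges, so that at $t=0$ the two instances, hence the two outputs, are genuinely independent and the limiting overlap equals $(\e\sigma^1)^2$, the square of the (unconditional) magnetization, which is then shown to be small for near-optimal configurations by an interpolation argument (Lemma~\ref{prop3}). The price is that the OGP must be proved for the \emph{coupled} diluted model uniformly in $t$ (Theorem~\ref{prop4}), which in turn requires the constrained two-replica Guerra--Talagrand bound for the $t$-correlated mean-field Hamiltonians $H^\ell=\sqrt{t}H+\sqrt{1-t}H^{\ell\prime}$ for all $t\in[0,1]$, not just the $t=1$ case your proposal treats. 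To repair your proof you would need to either import this coupled-graph construction and the accompanying uniform-in-$t$ OGP, or supply a new argument that near-optimal local algorithms on a fixed sparse graph have $o(1)$ conditional overlap under independent inputs; the latter is precisely what the paper says it could not do directly.
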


\paragraph{A summary of the proof}
We informally describe the key steps in establishing this result. For $\eta>0$,
call a spin configuration $\sg \in \Sigma^N$  to be $\eta$-optimal if the cut density
$H_{\lm,N}(\sg)/N$ is at least $(P(K) - \eta) \lm^{1/2}$.  
A key ingredient in the proof of the main result is the \emph{overlap gap property}.
It is the existence, when $K \geq 4$, of an $\eta_1 > 0$ and a positive-length interval $[a,b] \subset [0,1]$
such that the absolute value of  the overlap, defined in \eqref{eqn:overlap}, of \emph{any two}
$\eta_1$-optimal spin configurations does not take values in $[a,b]$ with high probability (w.h.p.)
as $N \to \infty$, provided that the connectivity $\lm$ is also sufficiently large.
This is the content of Theorem \ref{Thm1}.

For technical reasons, we need a strengthened version of the overlap gap property, namely, the overlap gap
property for \emph{coupled} diluted $K$-spin models. Specifically, we consider two coupled copies 
$\G_{N,t}^j\sim \mathrm{ER}(K, \lm, N), j=1,2$, where $t\in [0,1]$ is a fixed parameter indicating some fraction of common edges in the two graphs. The two graphs are identical when $t=1$ and are independent when $t=0$. The details of the construction are given in  Section~\ref{sec:coupling}. We establish the overlap gap property for this coupled model with respect to the aforementioned interval $[a,b]$. Namely,
there exists an $\eta_2>0$ such that for all sufficiently large $\lm$  and for all $t\in [0,1]$ the following holds:
for every $\eta_2$-optimal $\sigma_1$ for $\G_{N,t}^1$ and every $\eta_2$-optimal $\sigma_2$ for $\G_{N,t}^2$, the absolute
value of the overlap between $\sigma_1$ and $\sigma_2$ does not take values in $[a,b]$ w.h.p. as $N\to\infty$.
This is the content of Theorem~\ref{prop4}.

The overlap gap property for the coupled model discussed above is derived as a corollary of a similar overlap gap property,
with the same interval $[a,b]$ as above, for the mean field $K$-spin model \eqref{eqn:HSK}.
The overlap gap property of the latter model is stated as Theorem~\ref{Thm2} and proved in Section \ref{sec:Thm2proof}
by using the Parisi formula at zero temperature for this model. The connection between the overlap gap property for the graph model of interest
and the overlap gap property for the mean field model above is established via an interpolation technique
that connects their respective nearly optimal spin configurations. The interpolation technique is also used
in~\cite{ChenP17, DMS, JKS, PKsat, Sen}. This is the content of Section \ref{sec:Thm1proof}.

Finally, the overlap gap property for the coupled model is used to establish limits on the largest cut achievable by local algorithms (defined
as factors of i.i.d). The proof is by contradiction, assuming that a local algorithm exists that provides a spin configuration achieving
some $\eta$-optimal cut value in expectation. Standard concentration results can be used to show that this construction
leads to an $\eta$-optimal cut w.h.p. as $N\to\infty$. Then we proceed by constructing two $\eta$-optimal and
coupled spin configurations $\sg^1$ and $\sg^2$, based on factors of i.i.d., on the two $t$-coupled graphs $\G_{N,t}^1$ and $\G_{N,t}^2$, respectively. Here we use $t$-coupled sources of i.i.d.~decorations on the nodes of the coupled graph.
In particular, when $t=0$, the decorations are independent, and, since the graphs are independent as well,
$\sg^1$ and $\sg^2$ are uncorrelated and have near zero asymptotical overlap. For this part, we also establish
that every nearly optimal spin configuration is nearly balanced w.h.p.; see Lemma \ref{prop3}.

Next we show that the overlap of $\sg^1$ and $\sg^2$ is a continuous function of $t$.
When $t=1$, not only are the graphs identical but the spin configurations are as well
since they are based on the same set of i.i.d.~decorations. Therefore their overlap is 1 when $t=1$.
The continuity in $t$ along with the endpoint values implies that for some $t$ the
overlap of $\sg^1$ and $\sg^2$ falls into the interval $[a,b]$. Thus, the cut values of $\sg^1$ and
$\sg^2$ will not be $\eta$-optimal with the $\eta$ referenced in Theorem~\ref{prop4}. This is the contradiction.

Let us explain why we needed to establish the overlap gap property for the coupled model.
If we proceeded with the same idea of $t$-coupled i.i.d.~decorations on the \emph{same} graph, we could not necessarily argue
that the resulting spin configurations $\sg^1$ and $\sg^2$ have a small overlap when $t=0$ since they would not be uncorrelated. In principle,
they could have a non-trivial overlap due to the inherent randomness of neighborhood structures of a typical vertex in
the $\mathrm{ER}(K, \lm, N)$ model.

To conclude, we remark that the properties of local algorithms that were utilized in the proof of
the main theorem are the following. First, concentration of the cut density and magnetization of
spin configurations generated by any local algorithm $f$. Second, concentration of the overlap,
$R_N(f,t)$, of the $t$-coupled spin configurations $\sg^1$ and $\sg^2$ generated from $f$ via our construction.
Third, the property that $t \to \e R_N(f,t)$ fills out a sufficiently dense subset of $[0,1]$ for large $N$.
Any class of algorithms that satisfy these properties will be sub-optimal in the sense of Theorem \ref{thm:laf}
(or the informal Theorem \ref{thm:lafinformal}).

\subsection{Overlap gap property} \label{sec:introgap}

The \emph{magnetization} of a spin configuration $\sg \in \Sigma^N$ is defined as
\begin{equation} \label{eqn:mag}
m(\sg) = \frac{1}{N}\sum_{i \in [N]} \sg_i\,.
\end{equation}
The \emph{overlap} of two spin configurations $\sg^1, \sg^2 \in \Sigma^N$ is defined as
\begin{equation} \label{eqn:overlap}
R_{1,2} = \frac{1}{N} \sum_{i=1}^N \sg^1_i \sg^2_i = \frac{\sg^1 \cdot \sg^2}{N}.
\end{equation}
As mentioned earlier, we need a statement of the aforementioned overlap gap property for coupled copies of the diluted $K$-spin model,
which is Theorem \ref{prop4} below. An already interesting illustration is the following special case.
Here, $\G_N \sim \mathrm{ER}(K, \lm, N)$ and $H_{\lambda, N}$ is its Hamiltonian as defined in (\ref{eqn:HgraphTwo}).
\begin{theorem} \label{Thm1}
	For any even $K\geq 4$, there exist $0 < a < b < 1$, $\eta_0 > 0$, $\lm_0>0$ and $N_0\geq 1$ such that, for $\lambda\geq \lambda_0$ and $N\geq N_0$, the following holds with probability at least $1-Le^{-N/L}$ for some $L=L(\eta_0)$: for any two spin configurations $\sigma^1,\sigma^2$ that satisfy 
	$$
	\frac{H_{\lambda, N}(\sg^{\ell})}{N} \geq \sqrt{\lambda}\bigl(P(K)-\eta_0\bigr), \; \ell=1,2,
	$$
	the absolute value of their overlap satisfies $|R_{1,2}| \notin [a,b]$.
\end{theorem}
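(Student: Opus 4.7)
The plan is to transfer the overlap gap property from the mean field $K$-spin model (Theorem~\ref{Thm2}) to the diluted model via a two-replica Guerra--Toninelli interpolation \cite{GT,GT2}, following the scheme used in \cite{DMS,Sen} to establish \eqref{eqn:DMS}. Theorem~\ref{Thm2} supplies an interval $[a,b]\subset(0,1)$ and a constant $\delta>0$ such that, in the $N\to\infty$ limit, any pair of mean field configurations with $|R_{1,2}|\in[a,b]$ has average cut density at most $P(K)-\delta$. The task is to show that the same $[a,b]$ works in the diluted model at sufficiently large $\lm$, up to a correction that is $o(\sqrt{\lm})$, and then to upgrade the resulting expectation bound to a high-probability statement.

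Introduce the constrained ground state
\begin{equation*}
\widetilde M_{[a,b]}(\lm,N) \;=\; \E{\max_{\sg^1,\sg^2\,:\,|R_{1,2}|\in[a,b]} \frac{H_{\lm,N}(\sg^1)+H_{\lm,N}(\sg^2)}{2N}},
\end{equation*}
and the analogue $\widetilde P_{[a,b]}(K,N)$ using the mean field Hamiltonian $H_N$ from \eqref{eqn:HSK}. The main technical step is the uniform-in-$N$ comparison
\begin{equation*}
\widetilde M_{[a,b]}(\lm,N) \;\leq\; \sqrt{\lm}\,\widetilde P_{[a,b]}(K,N) + O(\lm^{1/3}).
\end{equation*}
To establish this, I would work at positive inverse temperature $\beta$ with the constrained partition function $Z_\beta = \sum_{\sg^1,\sg^2}\mathbf{1}\{|R_{1,2}|\in[a,b]\}\exp(\beta(H^{(s)}(\sg^1)+H^{(s)}(\sg^2)))$, where $H^{(s)}$ interpolates between a Poisson$(\lm N)$ diluted Hamiltonian at $s=0$ and $\sqrt{\lm}$ times the Gaussian mean field Hamiltonian at $s=1$, coupled across the two replicas in the standard way. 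Because the overlap constraint depends only on the configurations and not on the interactions, differentiating $\beta^{-1}\e\log Z_\beta$ in $s$ and applying Gaussian and Poisson integration by parts produces the same cumulant estimate that drives \cite{DMS,Sen}, yielding the $O(\lm^{1/3})$ correction. Sending $\beta\to\infty$ and then invoking Theorem~\ref{Thm2} gives $\widetilde M_{[a,b]}(\lm,N)/\sqrt{\lm}\leq P(K)-\delta/2$ for all $\lm\geq\lm_0$ and $N\geq N_0$.

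Choosing $\eta_0<\delta/8$, if two configurations with $|R_{1,2}|\in[a,b]$ both satisfied $H_{\lm,N}(\sg^\ell)/N\geq\sqrt{\lm}(P(K)-\eta_0)$, the random quantity inside the expectation defining $\widetilde M_{[a,b]}(\lm,N)$ would exceed its mean by at least $\sqrt{\lm}\,\delta/4$. To convert this into the $1-Le^{-N/L}$ bound, I would apply Azuma--Hoeffding to the Doob martingale obtained by revealing the edges of $\G_N$ one at a time: conditional on $|E(\G_N)|$, each edge changes the constrained maximum by at most $1/N$, and $|E(\G_N)|\sim\mathrm{Poisson}(\lm N)$ itself has exponential tails, so the resulting deviation probability is $\exp(-cN)$ with $c$ depending on $\eta_0$ and $\lm$.

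The main obstacle is the interpolation step itself: one must verify that the two-replica Guerra--Toninelli scheme tolerates the hard constraint $|R_{1,2}|\in[a,b]$ while preserving the $O(\lm^{1/3})$ accuracy that makes the argument go through. The positive-temperature formulation above is the natural way around this because the indicator $\mathbf{1}\{|R_{1,2}|\in[a,b]\}$ is a function of $\sg^1,\sg^2$ alone and therefore drops out of the differentiation identities; the delicate point is then only the zero-temperature limit, which is by now standard. A secondary but routine technicality is ensuring that all error terms in the interpolation and the Azuma step are uniform in $N$ so that the final probability bound has the claimed form.
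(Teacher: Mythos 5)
Your proposal is correct and follows essentially the same route as the paper: a two-replica Guerra--Toninelli interpolation carrying the overlap constraint (the paper's Lemma \ref{lem3}, with the constraint set fixed throughout so that it does not interfere with the Gaussian/Poisson integration by parts), comparison of the constrained diluted ground state to the constrained mean field one up to $O(\lm^{1/3})$, an appeal to Theorem \ref{Thm2}, and concentration via Azuma plus Poisson tails. The only cosmetic difference is that the paper tunes the edge-coupling strength $\delta=\lm^{-1/3}$ to pass to zero temperature rather than sending $\beta\to\infty$ separately, and the Azuma constant is in fact $\lm$-independent since the required deviation $\eta_0\sqrt{\lm}N$ matches the $\sqrt{\lm N\cdot N}$ scale of the martingale.
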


The proof of Theorem \ref{Thm1} (or more general Theorem \ref{prop4} below) will based on approximating the diluted model in terms of the fully connected mean field model. The following theorem establishes the overlap gap property for the mean field $K$-spin Hamiltonian $H_N$ in (\ref{eqn:HSK}). Its proof is the most technical part of the paper.

\begin{theorem}\label{Thm2}
	For even $K\geq 4$, there exist $0<a<b<1$ and $\eta>0$ such that, for large enough $N$,
	\begin{equation}
	\label{Thm2:eq1}
	\frac{1}{N}\, \e\max_{\substack{\sg^1, \sg^2 \in \Sigma^N: \\ |R_{1,2}|\in [a,b]}} \, \bigl(H_N(\sg^1)+H_N(\sg^2)\bigr) 
	\leq \frac{2}{N}\, \e\max_{\sg \in \Sigma^N} H_N(\sg) - \eta.
	\end{equation}
\end{theorem}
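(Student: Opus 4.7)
The plan is to combine the Parisi formula at zero temperature with a Guerra-Talagrand replica-symmetry-breaking upper bound for the two-replica free energy constrained to a fixed overlap value, and then to convert a strict point-wise inequality into a uniform one on an interval. For $q \in [-1,1]$ set
\[
\phi_N(q) = \frac{1}{N}\, \e \max_{\sigma^1,\sigma^2 \in \Sigma^N:\, R_{1,2} \approx q} \bigl(H_N(\sigma^1) + H_N(\sigma^2)\bigr),
\]
where $R_{1,2} \approx q$ means the overlap lies within some $o(1)$ neighbourhood of $q$ (a discretisation that vanishes in the limit). Since $H_N$ is Gaussian with a covariance that depends smoothly on the overlap, Borell-Sudakov-Tsirelson concentration of the maximum together with a Gaussian interpolation in $q$ (sliding one configuration while fixing the other) yield an $N$-uniform H\"{o}lder bound on $\phi_N$. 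Extracting a uniformly convergent subsequence produces a continuous limit $\phi:[-1,1]\to\Reals$ satisfying $\phi(q) = \phi(-q)$ (since $K$ even gives $H_N(-\sigma)=H_N(\sigma)$, so flipping $\sigma^2$ flips the overlap without changing the objective) and $\phi(q) \le 2P(K)$ everywhere.

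The core step is to show $\phi(q_0) < 2P(K)$ at some $q_0 \in (0,1)$. Writing $\phi_N(q)$ as the $\beta \to \infty$ limit of the constrained free energy
\[
F_N(\beta,q) = \frac{1}{\beta N}\, \e \log \sum_{\sigma^1,\sigma^2:\, R_{1,2}\approx q} \exp\bigl(\beta(H_N(\sigma^1)+H_N(\sigma^2))\bigr)
\]
and applying the Guerra-Talagrand two-replica RSB interpolation bound (of the flavour used by Talagrand and Panchenko for coupled systems), one obtains an upper bound $\phi(q) \le \mathcal{Q}(q;\zeta,\alpha)$ in which $\zeta$ is a finite Parisi measure on $[0,1]$ governing within-replica structure and $\alpha$ parametrises a cross-replica hierarchy compatible with the target overlap $q$. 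Decoupling the two replicas and invoking the single-replica Parisi formula at zero temperature gives $\inf_{\zeta} \mathcal{Q}(q;\zeta,\text{decoupled}) = 2 P(K)$, so it suffices to exhibit one admissible $(\zeta,\alpha)$ and one interior $q_0$ at which $\mathcal{Q}(q_0;\zeta,\alpha) < 2 P(K)$.

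To produce such a $q_0$, the strategy is to take $\zeta = \zeta^*$ (a finite approximation of the zero-temperature Parisi minimiser for the pure $K$-spin model), introduce a single-step cross-replica splitting at an appropriately chosen level, and compute the derivative of $\mathcal{Q}(q;\zeta^*,\alpha)$ with respect to the cross-replica parameter at the decoupled point. A non-vanishing derivative forces a strict drop, which transfers to $\phi$ by passing $\beta \to \infty$. This is the main obstacle of the proof: it requires specific analytic properties of the zero-temperature Parisi minimiser for the pure $K$-spin model, which for even $K \ge 4$ exhibits RSB with a non-trivial gap in the support of its Parisi measure; this gap is exactly what leaves room for a coupled bound strictly below the decoupled one at some $q_0$. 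For $K = 2$ the minimiser is continuously supported and the analogous coupled Parisi bound collapses to $2P(2)$ for every $q$, explaining the restriction $K \geq 4$.

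Finally, once $\phi(q_0) \leq 2P(K) - 2\eta_0$ is in hand, continuity and evenness of $\phi$ extend this to a strict inequality $\phi(q) \leq 2 P(K) - \eta_0$ on a closed interval $[a,b] \subset (0,1)$ containing $q_0$, and hence also on $[-b,-a]$ by symmetry. Uniform convergence $\phi_N \to \phi$ on this compact set, combined with $\frac{1}{N}\e \max_\sigma H_N \to P(K)$ from the Parisi formula, then yields the stated finite-$N$ inequality for all sufficiently large $N$, with $\eta = \eta_0/2$ say.
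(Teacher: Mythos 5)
Your overall architecture matches the paper's: constrain the two-replica maximum to a fixed overlap $q$, bound it by a coupled Guerra--Talagrand functional, observe that at the ``decoupled point'' the bound equals $2P(K)$, and argue that a perturbation produces a strict drop at some interior $q_0$, which then extends to an interval by continuity and to negative overlaps by the symmetry $H_N(-\sigma)=H_N(\sigma)$. However, there is a genuine gap at exactly the step you yourself flag as ``the main obstacle'': you never prove that the derivative at the decoupled point is non-vanishing, and the heuristic you offer for it is not the mechanism that actually works. The paper's argument is concrete: with the specific choice $\gamma_q=\tfrac{\gamma_P}{2}1_{[0,q)}+\gamma_P 1_{[q,1)}$ (the halving on $[0,q)$ is needed so that the coupled functional at Lagrange multiplier $\lambda=0$ equals exactly $2\mathcal{P}(\gamma_P)$), the derivative of the GT bound in the overlap-constraint multiplier $\lambda$ at $\lambda=0$ is computed, via a stochastic-control representation of the coupled Parisi PDE, to be $\e\,\partial_x\Phi_{\gamma_P}(q,X_{\gamma_P}(q))^2-q$. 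Strict negativity of this quantity for small $q>0$ does not come from ``a non-trivial gap in the support of the Parisi measure'' as a structural RSB input; it comes from the elementary fact that $\xi(s)=s^K$ has $\xi''(0)=0$ for $K\geq 4$, which, combined with the It\^{o}-isometry identity $\e(\partial_x\Phi_{\gamma_P}(s,X(s)))^2=\int_0^s\xi''(w)\,\e(\partial_{xx}\Phi_{\gamma_P}(w,X(w)))^2\,dw$ and an a priori bound on $\partial_{xx}\Phi_{\gamma_P}$, forces $\e(\partial_x\Phi_{\gamma_P}(s,X(s)))^2<s$ on some $(0,c)$. (That the support of $\gamma_P$ avoids a neighbourhood of $0$ is a consequence of this, via the consistency equation for support points, not an input.) For $K=2$ the argument fails because $\xi''(0)=2\neq 0$, which is consistent with, but more precise than, your appeal to full RSB.

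Two further points you should not gloss over. First, your claim that $\inf_\zeta\mathcal{Q}(q;\zeta,\mathrm{decoupled})=2P(K)$ for the coupled functional requires the factor-of-two adjustment of the Parisi order parameter on $[0,q)$ mentioned above; without it the decoupled coupled bound does not reduce to twice the single-replica Parisi functional. Second, your route to continuity of $q\mapsto\phi(q)$ (uniform H\"older bounds on the constrained maximum via Gaussian interpolation in $q$) is harder than necessary and not obviously correct as stated for a hard overlap constraint; the paper instead uses continuity of $\mathcal{T}_q(\lambda,\gamma)$ in $q$ and upper semicontinuity of the infimum over $(\lambda,\gamma)$, which suffices to pass from one $q_0$ to an interval $[a,b]$. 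Justifying the differentiability in $\lambda$ of the coupled PDE solution (needed for the derivative computation) for a general, possibly non-atomic $\gamma$ is itself a nontrivial regularity statement that your outline omits entirely.
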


The proof of Theorem \ref{Thm2} will utilize the zero temperature Parisi formula recently established by Auffinger and Chen in \cite{ChenAuf}, and the Guerra-Talagrand replica symmetry breaking bound at zero temperature, which is an extension of the corresponding bound at positive temperature used earlier by Guerra and Talagrand \cite{Guerra, TalUltra, TPF, SG2} to study the mean field $K$-spin model. Similar techniques have recently been used in \cite{BAJ} to study the questions of dynamics at positive temperature. For $K=2$, the case of ordinary graphs, the overlap gap property (\ref{Thm2:eq1}) is not expected to hold. See \cite{ACZ} for a discussion and recent rigorous results on the overlap structure of mean field spin models.

\paragraph{Outline of the paper}
Local algorithms are formally defined in Section \ref{sec:localg} and the main result is stated in Theorem \ref{thm:laf} following the necessary definitions. Section \ref{sec:hoa} contains the proof of Theorem \ref{thm:laf}. Section \ref{sec:Thm1proof} contains the proof of Theorem \ref{Thm1} and Theorem \ref{prop4}, assuming the validity of Theorem \ref{Thm2}. The proof of Theorem \ref{Thm2} is in Section \ref{sec:Thm2proof}.

\section{Definitions and main result} \label{sec:localg}

%We begin with some definitions. Formal statement of the main theorem, Theorem \ref{thm:laf}, is at the end of the section.

\paragraph{Hypergraphs and local algorithms:}
A $K$-uniform (directed) hypergraph is a pair $G = (V,E)$ where
$V$ is the set of vertices and $E \subseteq V^K$ are the edges,
each edge being a $K$-tuple of vertices. We write $v \in e$ ($v$ lies in $e$)
if the edge $e$ contains $v$ as a coordinate. Two vertices are said to be
connected if they lie in a common edge. The degree of a vertex is the
number of edges that contain it, counted with multiplicity as a vertex
may lie in an edge more than once. The distance between two vertices $v\not= w$ is
the minimal integer $d\geq 1$ such that there exists a sequence of vertices $v_1, \ldots, v_{d+1}$ with $v=v_1$ and $w=v_{d+1}$ and with any two consecutive $v_j, v_{j+1}$ for $1\leq j\leq d$ belonging to the same edge. Distance from $v$ to itself is zero. The $r$-neighbourhood
of $v$, denoted $B_r(G,v)$, is the subgraph of $G$ induced by all vertices within
distance $r$ of $v$. 

A rooted $K$-uniform hypergraph $(G,v)$ is a $K$-uniform hypergraph with a distinguised
vertex $v \in V(G)$ called the root. If $r$ is the smallest integer such that $B_r(G,v) = G$ then we
say $(G,v)$ \emph{has radius} $r$. Let 
\begin{align}
\begin{split}\label{eqn:Grr0}
\Gr_r &= \Bigl\{ (G,v) \,\mid\, (G,v) \mbox{ is a finite, connected,} \\
	&\qquad\mbox{$K$-uniform rooted hypergraphs with radius at most $r$}\Bigr\}.
\end{split}
\end{align}
Two rooted hypergraphs $(G,v)$ and $(G',v')$ are \emph{isomorphic}, denoted $(G,v)\cong (G',v')$, if there is a bijection $\phi\colon V(G) \to V(G')$ such that $\phi(v) = v'$ and $e \in E(G)$ if and only if $\phi(e) \in E(G')$, where $\phi$ acts on edges coordinate-wise. From now on, when we talk about hypergraphs, one can assume that we already picked a particular representative of an isomorphism class and think of isomorphisms as automorphisms of these representatives. In particular, we can redefine $\Gr_r$ as
\begin{equation}\label{eqn:Grr}
\Gr_r = \Bigl\{\mbox{representatives of isomorphism classes of (\ref{eqn:Grr0})}\Bigr\}.
\end{equation}
For a sequence of random rooted hypergraphs $(G_N, v_N)$, we define the sequence to \emph{convergence in distribution} in a natural way as convergence of the probabilities $\pr{B_r(G_N,v_N) \cong (G,v)}$ for every $r$ and every $(G,v)\in \Gr_r$. In this case, some elementary measure theoretic and
topological arguments imply that there is a random rooted graph $(G_{\infty}, v_{\infty})$ such that the aforementioned probabilities converge
to $\pr{B_r(G_{\infty},v_{\infty}) \cong (G,v)}$. This notion of convergence is often called local weak convergence of graphs or also
Benjamini-Schramm convergence; see \cite{HLS} and references therein.

\paragraph{Factors on rooted, labelled hypergraphs}
Let $\Lambda_r$ denote the collection of all \emph{$[0,1]$-labelled hypergraphs} of radius at most $r$, that is, the collection of all triples $(G,v,x)$, where
\begin{enumerate}
	\item $(G,v) \in \Gr_r$ defined in (\ref{eqn:Grr}).
	\item $x \in [0,1]^{V(G)}$ is a {labelling} of vertices of $G$.
\end{enumerate}
Two rooted $[0,1]$-labelled hypergraphs $(G,v,x)$ and $(G',v',x')$ are \emph{isomorphic} if there is a bijection $\phi\colon V(G) \to V(G')$, which is an isomorphism of $(G,v)$ and $(G',v')$, and such that
$$
x' = x\circ \phi^{-1}.
$$
A function $f \colon \Lambda_r \to \{-1,1\}$ is called a \emph{factor of radius $r$}, denoted $f\in\FF_r$, if it is measurable and 
$$
f(G,v,x) = f(G', v', x')
$$
for all isomorphic $(G,v,x)$ and $(G',v',x')$ in $\Lambda_r$. In other words, $f$ is constant on isomorphism classes of $\Lambda_r$. 

\paragraph{Local algorithms for $K$-uniform hypergraphs}
A local algorithm $A$ for $K$-uniform hypergraphs comes with an associated factor $f\colon \Lambda_r \to \{-1,1\}$. The input to $A$ is a finite $K$-uniform hypergraph $G$ and a \emph{random labeling} $X = (X(u))_{u \in V(G)}$ of $V(G)$ such that the $X(u)$'s are i.i.d. uniform random variables on $[0,1]$. The \emph{output} of the algorithm is a $\{-1,1\}$-valued stochastic process $\sg(f,X) =  (\sg(f,X)(u))_{u \in V(G)}$ defined by
\begin{equation} \label{eqn:AGv}
\sg(f,X)(u) = f \big (\, B_r(G,u), u,  \bigl(X(w)\bigr)_{w \in B_r(G,u)}\big).
\end{equation}
For simplicity of notation, when we write $w \in B_r(G,u)$ we really mean $w \in V(B_r(G,u)).$ Notice that the radius of $B_r(G,u)$ can be smaller than $r$, which is why a factor $f$ of radius $r$ was defined on hypergraphs of radius less than or equal to $r$.

One obvious property of factors that we will use implicitly throughout the paper is the following.
Suppose a factor $f \in \FF_r$ outputs the spin configuration $\sg(f,X)$ when run on a $K$-uniform hypergraph $G$.
Then for any subgraph $G' \subseteq G$, the distribution of the restriction of $\sg(X,f)$ onto $G'$ depends only
on the isomorphism class of the $r$-neighbourhood of $G'$ in $G$, i.e., on the subgraph of $G$ induced by $\cup_{v \in V(G')} B_r(G,v)$.
Below, we will use the same property for coupled hypergraph models without mentioning it.

\paragraph{Main result}
As above, $\G_N \Distr \mathrm{ER}(K,\lm,N)$ and $H_{\lambda, N}$ was defined in (\ref{eqn:HgraphTwo}).
Recall the definition of  the quantity $P(K)$ from \eqref{eqn:freeenergy}.

\begin{theorem} \label{thm:laf}
	Local algorithms defined as factors of i.i.d.~can not find spin configurations in $\mathrm{ER}(K,\lm,N)$ that are nearly optimal in the following sense.
	For even $K \geq 4$, there exists $\eta>0$ such that
	\begin{equation}\label{ThmMaineqn}
	\limsup_{\lm \to \infty}\, \sup_{r}\sup_{f\in\FF_r} \,
	\limsup_{N \to \infty}\; \e \frac{H_{\lambda, N}(\sg(f,X))}{\sqrt{\lm} N} \leq P(K)-\eta.
	\end{equation}\end{theorem}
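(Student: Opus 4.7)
The plan is a proof by contradiction, leveraging the overlap gap property for the coupled diluted model in Theorem \ref{prop4}. Fix $\eta$ strictly smaller than the $\eta_2$ of Theorem \ref{prop4}, with enough slack to absorb concentration errors. If (\ref{ThmMaineqn}) fails for this $\eta$, then for some $\lambda$ exceeding the threshold of Theorem \ref{prop4} and some factor $f \in \FF_r$ one has $\e H_{\lambda, N}(\sigma(f,X))/(\sqrt{\lambda}\, N) \geq P(K) - \eta$ along a subsequence of $N$. The strategy is to run $f$ on two $t$-coupled copies of $\mathrm{ER}(K,\lambda,N)$ equipped with $t$-coupled i.i.d.\ label fields, track the expected overlap of the two outputs as $t$ sweeps across $[0,1]$, and exhibit some $t^{\ast} \in (0,1)$ at which the overlap w.h.p.\ lies in the forbidden window $[a,b]$ while both outputs remain $\eta_2$-optimal, contradicting Theorem \ref{prop4}.

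Concretely, using the $t$-coupling of Section \ref{sec:coupling}, on the common vertex set $[N]$ one samples two graphs $\G_{N,t}^j \sim \mathrm{ER}(K,\lambda,N)$ ($j=1,2$) sharing an expected $t$-fraction of edges, and, independently, a $t$-coupled pair of labels $X^j \in [0,1]^N$ that agree coordinatewise with probability $t$ and are otherwise independent. Let $\sigma^j = \sigma(f, X^j)$ be the output of $f$ on $(\G_{N,t}^j, X^j)$ and set $R_N(f,t) = \sigma^1 \cdot \sigma^2 / N$. Because $f$ has fixed radius $r$, resampling any single label or any single edge changes only $O(1)$ coordinates of $\sigma^j$, so Azuma--McDiarmid inequalities yield exponential concentration, uniformly in $t$, of both $H_{\lambda, N}(\sigma^j)/(\sqrt{\lambda}\, N)$ and $R_N(f,t)$ about their expectations. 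In particular, along the subsequence each $\sigma^j$ is $\eta_2$-optimal with probability $1-o(1)$ for every $t \in [0,1]$.

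Next consider $\rho_N(t) := \e\, R_N(f,t)$. At $t = 1$ the two graphs and two label fields coincide, so $\sigma^1 = \sigma^2$ and $\rho_N(1) = 1$. At $t = 0$ the two copies are mutually independent; vertex exchangeability of $\mathrm{ER}(K,\lambda,N)$ together with $f$ being a factor yields $\e[\sigma^j_i] = \e\, m(\sigma^j)$ for every $i$, hence $\rho_N(0) = (\e\, m(\sigma^1))^2$, which by Lemma \ref{prop3} can be made arbitrarily close to $0$ upon choosing $\lambda$ and $N$ large. The bounded-radius locality of $f$ implies $\rho_N$ is a continuous function of $t$ on $[0,1]$, so by the intermediate value theorem there is $t^{\ast} \in (0,1)$ with $\rho_N(t^{\ast}) \in [a,b]$. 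Concentration at this $t^{\ast}$ then places $|R_N(f, t^{\ast})| \in [a,b]$ with probability $1-o(1)$, which combined with the $\eta_2$-optimality of both $\sigma^1$ and $\sigma^2$ contradicts Theorem \ref{prop4}.

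The main obstacle is establishing the quantitative continuity of $t \mapsto \rho_N(t)$ and the concentration of $R_N(f,t)$; both rely crucially on the bounded-radius locality of $f$. A small perturbation of $t$ alters only a small fraction of shared edges and labels in the coupling, and each such alteration propagates only to an $O(1)$-sized ball of spins, which gives the continuity of $\rho_N$ and drives the Azuma--McDiarmid sensitivity bounds. The remaining ingredients (exponential concentration of cut density, and the near-balance of nearly optimal configurations supplied by Lemma \ref{prop3}) are then routine given locality, so the bulk of the technical work lies in carefully quantifying how the joint law of $(\G_{N,t}^j, X^j)_{j=1,2}$ depends on $t$ and combining this with the locality of $f$.
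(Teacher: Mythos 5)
Your overall strategy is the same as the paper's: proof by contradiction via the coupled graph model of Section \ref{sec:coupling}, sweeping the coupling parameter $t$ from $0$ to $1$, using continuity of the expected overlap plus the intermediate value theorem to land the overlap in the forbidden window, and then invoking Theorem \ref{prop4}. Your label coupling differs slightly (you couple the labels coordinatewise with probability $t$, independently of the graphs, whereas the paper ties the label coupling to the shared edges), but either choice serves the argument, since Theorem \ref{prop4} makes no reference to the labels and both couplings give the right endpoint behaviour at $t=0$ and $t=1$.

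The genuine gap is in the concentration step. You assert that resampling a single edge or a single label changes only $O(1)$ coordinates of $\sigma^j$, so that Azuma--McDiarmid gives exponential concentration of the cut density and of $R_N(f,t)$, uniformly in $t$. This is false as stated: in $\mathrm{ER}(K,\lambda,N)$ the maximum degree is unbounded (of order $\log N/\log\log N$), so resampling one edge or one label can alter the outputs at up to $\Delta^{O(r)}$ vertices with $\Delta$ the maximum degree; the worst-case Lipschitz constant is therefore not $O(1)$ and the plain bounded-differences inequality does not apply. The paper instead proves concentration by a second-moment argument: the local contributions $H_{\G_N,v}(\sg(f,X))$ and $H_{\G_N,u}(\sg(f,X))$ (and likewise the local overlap contributions) are asymptotically independent for distinct $u,v$ by local weak convergence to independent Galton--Watson trees, giving variance $o(N^2)$ and hence concentration in probability, which is all that is needed. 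Relatedly, because you work with the finite-$N$ function $\rho_N(t)$, your $t^{\ast}$ depends on $N$, so you genuinely need concentration uniform in $t$; the paper sidesteps this by first passing to the limit $R(f,t)$ computed on the coupled Galton--Watson tree, proving continuity of that limiting function in $t$, and fixing a single $t^{\ast}$ independent of $N$ before invoking concentration at that one value. Both issues are repairable by adopting the paper's order of quantifiers and its variance argument, but as written the concentration step would fail.
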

The result establishes that, for large enough $\lm$, the performance of any local algorithm is bounded away from the optimal value $P(K)$ by some additive gap $\eta$ regardless of the choice of the factor $f$ and its depth $r$. By the concentration result in Lemma \ref{lemma:Factor-concentration} below, the same holds with high probability rather than on average.

\section{Proof of the main result} \label{sec:hoa}

\subsection{Concentration of cut density and magnetization} 
We begin by establishing a concentration of the cut density achievable by local algorithms.
\begin{lemma}\label{lemma:Factor-concentration}
	For every $\lm,r,f\in\FF_r$ and $\epsilon>0,$
	\begin{align*}
	\lim_{N\rightarrow\infty}
	\p\Bigl(\bigl|H_{\G_N}\bigl(\sg(f,X)\bigr)-\e H_{\G_N}\bigl(\sg(f,X)\bigr)\bigr|\ge \epsilon N\Bigr)=0.
	\end{align*}
\end{lemma}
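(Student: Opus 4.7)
The plan is to show $\mathrm{Var}(F) = O(N)$ for $F := H_{\G_N}(\sg(f,X))$, from which Chebyshev's inequality immediately gives the stated convergence. The argument proceeds via the Efron--Stein inequality after Poissonizing the edge sampling: the graph $\G_N \sim \mathrm{ER}(K,\lambda,N)$ is equivalent to placing, for each ordered tuple $e \in [N]^K$, an independent $\mathrm{Poisson}(\lambda/N^{K-1})$-many copies of $e$. Together with the i.i.d.\ labels $X(v)$, this presents $F$ as a function of $N^K + N$ mutually independent inputs.

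The locality of $f \in \FF_r$ provides bounded differences with respect to each input. Resampling a label $X(v)$ can change $\sg(f,X)(u)$ only for $u \in B_r(\G_N, v)$, and hence changes the Hamiltonian only through edges incident to $B_r(\G_N, v)$; letting $E_{r+1}(v)$ denote these edges, this yields $|F - F^{(v)}| \le 2|E_{r+1}(v)|$. Resampling the multiplicity of a tuple $e$ affects only spins in $B_r$ of $e$ in the original and resampled graph, producing an analogous bound involving the edges near $e$ in both realizations.

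Because the Erd\H{o}s--R\'enyi hypergraph of constant average degree is locally a Poisson Galton--Watson object with bounded mean offspring, an induction on $r$ shows that the second moment of $|E_{r+1}(v)|$ is bounded by a constant $C(\lambda,K,r)$ independent of $N$. Applying Efron--Stein,
\begin{equation*}
\mathrm{Var}(F) \;\le\; \tfrac{1}{2} \sum_{v \in [N]} \E\bigl[(F-F^{(v)})^2\bigr] \;+\; \tfrac{1}{2} \sum_{e \in [N]^K} \E\bigl[(F-F^{(e)})^2\bigr] \;\le\; C'(\lambda,K,r)\, N,
\end{equation*}
where the edge sum contributes $O(N)$ after weighting by the per-tuple Poisson rate $\lambda/N^{K-1}$. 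Chebyshev's inequality then gives $\p(|F - \E F| \ge \epsilon N) = O(1/N) \to 0$, which is the desired conclusion.

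The main technical burden is the uniform-in-$N$ second moment estimate on $|E_{r+1}(v)|$, which requires tracking Poisson moments through the recursive branching exploration of the neighborhood; this is standard but needs care to keep all constants independent of $N$. A minor issue is that the edge-resampling differences involve neighborhoods in two different graphs, but since the resampled tuple is independent of the remaining edge counts, both neighborhoods are controlled by the same Poisson moment bound.
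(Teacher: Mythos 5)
Your proposal is correct, but it takes a genuinely different route from the paper. The paper writes $H_{\G_N}(\sg(f,X))=\sum_{v}H_{\G_N,v}(\sg(f,X))$ with each vertex carrying a $1/K$ share of its incident edges, expands the variance over pairs of vertices, and kills the off-diagonal covariances by a soft argument: the pair of $(r+1)$-neighbourhoods of two fixed distinct vertices converges in distribution to two \emph{independent} Galton--Watson hypertrees, and together with uniform exponential integrability of $H_{\G_N,v}$ this forces each covariance to vanish, giving only $\mathrm{Var}=o(N^2)$. You instead Poissonize the edge multiplicities and apply Efron--Stein, controlling the influence of each label and each tuple by the number of edges in an $(r+1)$-neighbourhood, whose second moment is bounded uniformly in $N$ by stochastic domination by a Poisson branching process; the $N^K$ tuple terms are each damped by the per-tuple rate $\lambda/N^{K-1}$ (resampling changes nothing unless the two independent Poisson multiplicities differ), so the edge sum is indeed $O(N)$. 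Your argument yields the quantitatively stronger $\mathrm{Var}=O(N)$ and hence a rate $O(1/N)$ in Chebyshev, and it avoids local weak convergence entirely, at the price of explicit moment bookkeeping for neighbourhood sizes in two coupled realizations of the graph; the paper's softer argument suffices for the stated $o(1)$ conclusion and reuses the same local-convergence machinery that is needed anyway for the coupled-overlap concentration and continuity statements later in Section 3.
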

\begin{proof}
	For each node $v\in [N]$, let $N_\ell(v)$ denote the set of edges $e$ containing $v$ in the $\ell$-th position of the 
	edge, for $\ell=1,2,\ldots,K$. 
	Let 
	\begin{align*}
	H_{\G_N,v}\bigl(\sg(f,X)\bigr)=
	\frac{1}{K} \, \sum_{1\le \ell\le K} \, \sum_{e\in N_\ell(v)} \, \theta\bigl(\sg(f,X),e\bigr). 
	\end{align*}
	Observe that $H_{\G_N}(\sg(f,X))=\sum_{v\in [N]}H_{\G_N,v}(\sg(f,X))$, since every edge $e$ appears in exactly $K$ terms. Let us expand the variance of $H_{\G_N}(\sg(f,X))$ in terms of $H_{\G_N,v}(\sg(f,X))$ for $v\in [N]$. We obtain,
	\begin{align*}
	&
	\e \Bigl(H_{\G_N}\bigl(\sg(f,X)\bigr)-\e H_{\G_N}\bigl(\sg(f,X)\bigr) \Bigr)^2 
	\\
	&= \sum_{v,u\in [N]}
	\e \Bigl(H_{\G_N,v}\bigl(\sg(f,X)\bigr)-\e H_{\G_N,v}\bigl(\sg(f,X)\bigr) \Bigr)\\
	&\qquad\qquad\quad
	\Bigl(H_{\G_N,u}\bigl(\sg(f,X)\bigr)-\e H_{\G_N,u}\bigl(\sg(f,X)\bigr)\Bigr).
	\end{align*}
	Let us fix any two distinct vertices $u$ and $v$ in $[N]$, for instance, $u=1$ and $v=2$.
	A classical fact from the theory of random graph states that for every fixed $r$ and $\lm$, as $N\to\infty$,
	$B_{r+1}(\G_N,u)\cap B_{r+1}(\G_N,v)=\emptyset$ with high probability and the pair $\left(B_{r+1}(\G_N,u),B_{r+1}(\G_N,v)\right)$
	converges in distribution to a pair of independent Galton-Watson processes both truncated at $r+1$ generations.
	In this limiting Galton-Watson process (hypertree), for each vertex, the number of offsprings (i.e.~hyperedges which contain this vertex
	and $K-1$ new distinct vertices that belong to the next generation) has $\pos(\lm K)$ distribution. Note that for every vertex $v$
	the value $H_{\G_N,v}(\sg(f,X))$ is determined by $B_{r+1}(\G_N,v)$ and the i.i.d.~labelling of the vertices in $B_{r+1}(\G_N,v)$.
	Therefore, we conclude that the random variables $H_{\G_N,v}(\sg(f,X))$ and $H_{\G_N,u}(\sg(f,X))$ are also asymptotically
	independent as $N\to\infty$. Both of these random variables are uniformly exponentially integrable in $N$, i.e., for every $t>0$,
	$\sup_N\E{\exp(tH_{\G_N,v}(\sg(f,X)))}<\infty$. Thus,
	\begin{align*}
	&\lim_{N\to\infty}
	\e \Bigl(H_{\G_N,v}\bigl(\sg(f,X)\bigr)-\e H_{\G_N,v}\bigl(\sg(f,X)\bigr) \Bigr)\\
	&\qquad\quad\Bigl(H_{\G_N,u}\bigl(\sg(f,X)\bigr)-\e H_{\G_N,u}\bigl(\sg(f,X)\bigr)\Bigr)
	=0.
	\end{align*}
	Notice that, by symmetry, this expectation is the same for all pairs of distinct vertices $u$ and $v$, and we conclude that the variance
	of $H_{\G_N}(\sg(f,X))$ satisfies
	\begin{equation*}
	\e \Bigl(H_{\G_N}\bigl(\sg(f,X)\bigr)-\e H_{\G_N}\bigl(\sg(f,X)\bigr) \Bigr)^2  = o(N^2)
	\end{equation*}
	as $N \to \infty$. Applying Chebyshev's inequality, we obtain the result.
\end{proof}

The identical proof also gives concentration of the magnetization.
\begin{lemma}\label{lemma:mag-concentration}
	For every $\lm,r,f\in\FF_r$ and $\epsilon>0,$
	\begin{align*}
	\lim_{N\rightarrow\infty}
	\p\Bigl(\bigl| m\bigl(\sg(f,X)\bigr)-\e m\bigl(\sg(f,X)\bigr)\bigr|\ge \epsilon N\Bigr)=0.
	\end{align*}
\end{lemma}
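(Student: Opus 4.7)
The plan is to mimic the variance computation from the proof of Lemma~\ref{lemma:Factor-concentration}, with the decomposition into single-vertex contributions being even cleaner in this case. Writing
\[
m\bigl(\sg(f,X)\bigr) = \frac{1}{N}\sum_{v\in[N]} \sg(f,X)(v),
\]
each term $\sg(f,X)(v) = f(B_r(\G_N,v),v,(X(w))_{w\in B_r(\G_N,v)})$ is a bounded random variable of modulus $1$, and is measurable with respect to the $\sigma$-algebra generated by $B_r(\G_N,v)$ together with the i.i.d.\ labels on its vertices.

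Expanding the variance of $Nm(\sg(f,X)) = \sum_v \sg(f,X)(v)$ gives
\[
\e\Bigl(Nm\bigl(\sg(f,X)\bigr) - \e Nm\bigl(\sg(f,X)\bigr)\Bigr)^2
= \sum_{u,v\in[N]} \Cov\bigl(\sg(f,X)(u),\, \sg(f,X)(v)\bigr).
\]
Fix two distinct vertices, say $u=1$ and $v=2$. As recalled in the previous proof, the pair $(B_{r}(\G_N,u),B_{r}(\G_N,v))$ converges in distribution, as $N\to\infty$, to a pair of independent truncated Galton-Watson hypertrees, and in particular $B_r(\G_N,u)\cap B_r(\G_N,v) = \emptyset$ with high probability. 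When this intersection is empty, the corresponding labellings are independent as well, so $\sg(f,X)(u)$ and $\sg(f,X)(v)$ become asymptotically independent. Because $|\sg(f,X)(v)|\le 1$, the bounded convergence theorem applies directly (no uniform integrability argument is needed), and one obtains
\[
\lim_{N\to\infty}\Cov\bigl(\sg(f,X)(u),\, \sg(f,X)(v)\bigr) = 0.
\]

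By the symmetry of the $\mathrm{ER}(K,\lm,N)$ distribution, the covariance is the same for every pair of distinct vertices, while the diagonal contribution from $u=v$ is at most $1$ per vertex. Summing yields $\Var(Nm(\sg(f,X))) = o(N^2)$, and Chebyshev's inequality then gives, for every $\epsilon>0$,
\[
\p\Bigl(\bigl|m\bigl(\sg(f,X)\bigr) - \e m\bigl(\sg(f,X)\bigr)\bigr|\ge \epsilon\Bigr)
\le \frac{\Var(Nm(\sg(f,X)))}{\epsilon^2 N^2} \longrightarrow 0,
\]
which in particular implies the stated bound. There is no real obstacle here beyond bookkeeping: the boundedness of the spins makes this step strictly easier than the analogous step for $H_{\G_N}$, where one had to invoke uniform exponential integrability to control the unbounded edge contributions.
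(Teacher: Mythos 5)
Your proposal is correct and follows essentially the same route as the paper, which simply states that the identical variance/covariance argument from Lemma \ref{lemma:Factor-concentration} applies; your observation that boundedness of the spins lets you replace the exponential-integrability step by bounded convergence is a valid minor simplification. The only cosmetic point is that the lemma's event is stated with threshold $\epsilon N$ (almost certainly a typo for $\epsilon$), and you correctly prove the stronger $\epsilon$-version, which implies it.
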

Notice also that, by symmetry, $\e m(\sg(f,X)) = \e \sg_1(f,X),$ where $\sigma_1$ is the first coordinate of $\sigma.$ In the proof of Theorem \ref{thm:laf}, we will need one more property of magnetization that we discuss next.

The proof of Theorem \ref{thm:laf} below will proceed by contradiction, assuming that, for any $\eta>0$, we can find arbitrarily large $\lambda$ for which there exists a factor $f\in\FF_r$ for some $r$ such that
\begin{equation}\label{contradiction}
\e \frac{H_{\G_N}(\sg(f,X))}{N} > \sqrt{\lambda}(P(K)-\eta)
\end{equation}
for some arbitrarily large $N$. The concentration inequality in Lemma \ref{lemma:Factor-concentration} then implies that, we can find arbitrarily large $N$ for which, with high probability,
\begin{equation}\label{contradiction2}
\frac{H_{\G_N}(\sg(f,X))}{N} > \sqrt{\lambda}(P(K)-\eta).
\end{equation}
It will be important to know that the magnetization $m(\sg(f,X))$ of the output $\sg(f,X)$ of the
algorithm is small for large $\lambda$, with high probability.
%Actually, it is straightforward to show, using the first moment method and Poisson concentration, that, with high probability, any configuration %with $H_{\G_N}(\sg)\geq 0$ must have absolute magnetization $|m(\sg)|$ of order $\lambda^{-1/(2K)},$ which would be enough for our %purposes. However,
For this, as a byproduct of the interpolation technique in Section \ref{sec:Thm1proof}, we will prove the following. 

\begin{lemma}
	\label{prop3} For any $\eta_0>0$, there exist $\lambda_0$ and $N_0$ such that for any $\lambda\geq \lambda_0$ and $N\geq N_0$, the following event holds with probability at least $1-Le^{-N/L}$ for some $L=L(\eta)$: Whenever $\sigma$ satisfies 
	\begin{align*}
	\frac{H_{\G_N}(\sigma)}{N}&> \sqrt{\lambda}\bigl(P(K)-\eta_0\bigr),
	\end{align*}
	it must also satisfy $|m(\sigma)|<\lambda^{-1/(2K)}(4\eta_0)^{1/K}.$
\end{lemma}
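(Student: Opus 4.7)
My plan is to derive Lemma~\ref{prop3} from a magnetization-sensitive upper bound on the diluted Hamiltonian, obtained as a by-product of the Guerra--Toninelli interpolation developed in Section~\ref{sec:Thm1proof}. The key observation is that, for any fixed $\sigma\in\Sigma^N$, the expectation over the random edges is $\e H_{\G_N}(\sigma) = -\lm N\, m(\sigma)^K$, since $|E|\sim\mathrm{Poisson}(\lm N)$ and each edge is uniform on $[N]^K$. I would therefore work with the centered Hamiltonian
$$
\tilde H_{\G_N}(\sigma)\;:=\;H_{\G_N}(\sigma) + \lm N\, m(\sigma)^K
$$
and aim to show that, for all $\lm$ and $N$ sufficiently large,
$$
\max_{\sigma\in\Sigma^N}\frac{\tilde H_{\G_N}(\sigma)}{N} \;\leq\; \sqrt{\lm}\bigl(P(K)+\eta_0\bigr)
$$
with probability at least $1-L\, e^{-N/L}$ for some $L=L(\eta_0)$. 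Granting this, the conclusion of the lemma is immediate: on this high-probability event, for any $\sigma$ satisfying $H_{\G_N}(\sigma)/N > \sqrt{\lm}(P(K)-\eta_0)$, subtracting the two inequalities yields
$$
\lm\, m(\sigma)^K \;=\; \frac{\tilde H_{\G_N}(\sigma)-H_{\G_N}(\sigma)}{N} \;<\; 2\eta_0\sqrt{\lm},
$$
so $|m(\sigma)| < (2\eta_0)^{1/K}\lm^{-1/(2K)} < (4\eta_0)^{1/K}\lm^{-1/(2K)}$, as required.

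To obtain the in-expectation bound $\e\max_\sigma \tilde H_{\G_N}(\sigma)/N \leq \sqrt{\lm}\, P(K) + \smallO{\sqrt{\lm}}$, I would reuse the Guerra--Toninelli interpolation between the diluted $K$-spin model and the fully connected mean field model that Section~\ref{sec:Thm1proof} deploys to derive Theorem~\ref{Thm1} from Theorem~\ref{Thm2}. Applied to a fixed $\sigma$, that interpolation compares $H_{\G_N}(\sigma)$ with the Gaussian surrogate $\sqrt{\lm}\, H_N(\sigma)$ and already yields the Dembo--Montanari--Sen estimate \eqref{eqn:DMS}; running the same interpolation with the deterministic shift $\lm N m(\sigma)^K$ built in (equivalently, restricting to each magnetization-level set $\{\sigma\colon m(\sigma)=m\}$ and carrying the explicit $-\lm m^K$ term through the bound) should yield the same constant $\sqrt{\lm}\, P(K)+\smallO{\sqrt{\lm}}$, because the shift is precisely the Poisson first-moment contribution that the interpolation cancels against its zero-mean Gaussian counterpart.

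The expectation bound can then be promoted to exponential concentration by standard methods: conditional on $|E|=n$, the functional $\max_\sigma \tilde H_{\G_N}$ has bounded differences of size $2$ in the $n$ i.i.d.~uniform edges, so McDiarmid's inequality gives sub-Gaussian tails at scale $\bigO{\sqrt{n}}$; a Poisson tail bound on $|E|$ handles its own fluctuation, and choosing the deviation proportional to $\eta_0\sqrt{\lm}\, N$ absorbs the $\lm$-dependence and delivers a bound of the form $L\, e^{-N/L}$ with $L$ depending only on $\eta_0$. The hard part of the whole argument is the interpolation step: one must verify that the magnetization shift $\lm N m(\sigma)^K$ can indeed be transferred through the interpolation with the sharp constant $P(K)$, rather than some weaker constant that a direct concentration argument alone would produce --- a weaker constant would not suffice, since the conclusion requires the bound to be tight up to an additive $\eta_0\sqrt{\lm}$. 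Everything beyond this refinement (the concentration and the final algebra) is routine.
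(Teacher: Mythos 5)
Your proposal is correct and uses the same engine as the paper — the Guerra--Toninelli interpolation of Lemma~\ref{lem3}, whose magnetization term $-\lambda\int_0^1\e\bigl\la m(\sigma^{1})^K+m(\sigma^{2})^K\bigr\ra_s\,ds$ is exactly the Poisson first-moment contribution you propose to center by — but it is packaged differently, and the step you flag as the ``hard part'' is avoidable. Rather than carrying the deterministic shift $\lambda N m(\sigma)^K$ through the interpolation (or decomposing into magnetization level sets and taking a union bound), the paper applies Lemma~\ref{lem3} directly with $A=\{\sigma:|m(\sigma)|\ge c\}$ for $c=(4\eta_0)^{1/K}\lambda^{-1/(2K)}$, $S=[0,1]$ and $t=1$: since $K$ is even, the Gibbs measure is supported on configurations with $m(\sigma)^K\ge c^K$, so the penalty term is at least $2\lambda c^K=8\sqrt{\lambda}\eta_0$ and one only needs a one-sided bound, not an exact cancellation with the sharp constant. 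Bounding the restricted Gaussian maximum by the unrestricted one then gives $\frac{1}{N}\e\max_{\sigma\in A}H_\lambda(\sigma)\le\sqrt{\lambda}(P(K)-2\eta_0)$ for large $\lambda$, and Poisson plus Azuma concentration (as in your last paragraph) upgrades this to the stated high-probability event, on which every $\eta_0$-optimal $\sigma$ must lie outside $A$. Your route would also work — your subtraction argument is valid and even yields the slightly better constant $(2\eta_0)^{1/K}$ — but it requires either verifying the modified interpolation with the drift term or handling $O(N)$ level sets with a union bound, neither of which is needed once one notices that Lemma~\ref{lem3} is already stated for an arbitrary constraint set $A$.
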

Take $\eta_0=1/8$ in the above.
In (\ref{contradiction}) and (\ref{contradiction2}), we can assume that $\eta\leq 1/8,$ $\lambda\geq \lambda_0$ and $N\geq N_0$ and, as a result, with high probability we get that the absolute magnetization $|m(\sg(f,X))|<\lambda^{-1/(2K)}2^{-1/K}$. Lemma \ref{lemma:mag-concentration} implies that $|\e m(\sg(f,X))|\leq \lambda^{-1/(2K)}$, although the condition of how large $N$ should be can now depend on $f$ and $\lambda.$ To summarize, we have the following.

\begin{lemma}\label{Lem-magnet}
	There exists $\lambda_0$ such that, for $\lambda\geq \lambda_0$, if (\ref{contradiction}) holds then $|\e m(\sg(f,X))|\leq \lambda^{-1/(2K)}$, for all large enough $N$.
\end{lemma}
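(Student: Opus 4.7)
The plan is to combine Lemma \ref{prop3} with the two concentration estimates (Lemmas \ref{lemma:Factor-concentration} and \ref{lemma:mag-concentration}). Lemma \ref{prop3} says that, with high probability over $\G_N$, every nearly optimal spin configuration on $\G_N$ is nearly balanced. Concentration of the cut density will transfer the ``nearly optimal'' hypothesis from the expectation $\e H_{\G_N}(\sg(f,X))$ to the random variable itself, yielding a high-probability bound on $|m(\sg(f,X))|$; concentration of the magnetization will then transfer that bound back to $|\e m(\sg(f,X))|$.

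Apply Lemma \ref{prop3} with $\eta_0=1/8$ to obtain $\lambda_0, N_0$ and $L$ such that, for every $\lambda\geq\lambda_0$ and $N\geq N_0$, with probability at least $1-Le^{-N/L}$ the random graph $\G_N$ has the property that every $\sigma\in\Sigma^N$ satisfying $H_{\G_N}(\sigma)/N>\sqrt{\lambda}(P(K)-1/8)$ also satisfies $|m(\sigma)|<\lambda^{-1/(2K)}\,2^{-1/K}$. Now suppose (\ref{contradiction}) holds for some $\eta\leq 1/8$ and $\lambda\geq\lambda_0$. Since the chain
\[
\e H_{\G_N}(\sg(f,X))/N \,>\, \sqrt{\lambda}(P(K)-\eta) \,\geq\, \sqrt{\lambda}(P(K)-1/8)
\]
carries a strictly positive gap (one may take $\eta<1/8$ strictly, or equivalently enlarge $\eta_0$ slightly in the previous step), Lemma \ref{lemma:Factor-concentration} yields
\[
\p\Bigl(H_{\G_N}(\sg(f,X))/N > \sqrt{\lambda}(P(K)-1/8)\Bigr) \,\longrightarrow\, 1 \quad\text{as } N\to\infty.
\]
Intersecting with the high-probability event from Lemma \ref{prop3} gives $|m(\sg(f,X))|<\lambda^{-1/(2K)}\,2^{-1/K}$ with probability tending to one as $N\to\infty$.

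Finally, set $\delta:=\lambda^{-1/(2K)}\bigl(1-2^{-1/K}\bigr)>0$. By Lemma \ref{lemma:mag-concentration}, the event $|m(\sg(f,X))-\e m(\sg(f,X))|<\delta$ also holds with probability tending to one as $N\to\infty$, where the required $N$ may depend on $f$ and $\lambda$. For all sufficiently large $N$, both high-probability events occur simultaneously with positive probability, and since $\e m(\sg(f,X))$ is a deterministic number, the triangle inequality evaluated at any point of the intersection gives
\[
|\e m(\sg(f,X))| \,<\, \lambda^{-1/(2K)}\,2^{-1/K} + \delta \,=\, \lambda^{-1/(2K)},
\]
which is the claimed bound. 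All the real content is in Lemma \ref{prop3}; the rest is routine concentration bookkeeping. The only delicate point is extracting enough slack from the strict inequality in (\ref{contradiction}) to deduce the high-probability bound on $H_{\G_N}(\sg(f,X))$, which is handled either by assuming $\eta<1/8$ strictly or by enlarging $\eta_0$ slightly beyond $1/8$.
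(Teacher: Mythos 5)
Your proposal is correct and follows essentially the same route as the paper: apply Lemma \ref{prop3} with $\eta_0=1/8$, use Lemma \ref{lemma:Factor-concentration} to upgrade the expectation bound (\ref{contradiction}) to the high-probability bound (\ref{contradiction2}), intersect events to get $|m(\sg(f,X))|<\lambda^{-1/(2K)}2^{-1/K}$ w.h.p., and finish with Lemma \ref{lemma:mag-concentration}. Your extra care about the slack needed to pass from the expectation to the high-probability statement is a reasonable refinement of a point the paper leaves implicit (and which is harmless in the application, where $\eta=\eta_0/2$ gives strict room).
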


\subsection{The coupled graph model and its properties} \label{sec:coupling}

The key idea in the proof of Theorem \ref{thm:laf} will be running the local algorithm on a pair of coupled hypergraphs
\begin{equation} \label{eqn:cplmodel}
\G_{N,t}^1 \,\,\mbox{ and }\,\, \G_{N,t}^2
\end{equation}
defined on the same set of $N$ vertices and indexed by the coupling parameter $t\in [0,1]$. The construction is as follows. Generate three independent random variables
$$|E_t| \Distr \pos(t \lm N) \;\;\text{and}\;\; |E_{t,1}| \;,\; |E_{t,2}| \Distr \pos((1-t)\lm N).$$
The somewhat unusual choice of notations $|E_{\cdot}|$ is motivated by having $E_{\cdot}$ denote
sets of edges in graphs constructed below, where $|E_{\cdot}|$ are the cardinalities of these sets. 
\begin{enumerate}
	\item For each $1 \leq i \leq |E_t|$, generate an edge $e_i \in [N]^K$ uniformly at random, independently for all $i$. The set of these edges is denoted by $E_t$. 
	
	\item For each $\ell = 1,2$ and each $1\le i\le |E_{t,\ell}|$ generate an edge $e_i \in [N]^K$ uniformly at random, independently for all $i,\ell$ and independently from the set $E_t$. For a given $\ell,$ the set of these edges is denoted $E_{t,\ell}.$
\end{enumerate}

The hypergraph $\G_{N,t}^\ell$ is defined by the edge set $E_t\cup E_{t,\ell}$ for $\ell=1,2$. Both $\G_{N,t}^1$ and $\G_{N,t}^2$ clearly have the same distribution $\mathrm{ER}(K,\lm,N)$. When $t=0$, the two hypergraphs are sampled independently of each other and, when $t=1$, the two are identical.

The following overlap gap property for the coupled random graph model will be crucial in the proof of Theorem \ref{thm:laf}. It will be proved in Section \ref{sec:Thm1proof}. The special case corresponding to $t=1$ was stated in Theorem \ref{Thm1} above.
\begin{theorem}\label{prop4} 
	For any even $K\geq 4$, there exist $0 < a < b < 1,$ $\eta_0 > 0$, $\lm_0>0$ and $N_0\geq 1$ such that, for any $t\in [0,1]$, $\lambda\geq \lambda_0$, $N\geq N_0$, the following holds with probability at least $1-Le^{-N/L}$ for some $L=L(\eta_0)$: whenever two spin configurations $\sigma^1,\sigma^2$ satisfy 
	$$
	\frac{H_{\G^{\ell}_N}(\sg^{\ell})}{N} \geq \sqrt{\lambda}\bigl(P(K)-\eta_0\bigr)
	$$
	then the absolute value of their overlap satisfies $|R_{1,2}| \notin [a,b]$.
\end{theorem}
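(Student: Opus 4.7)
My plan is to deduce Theorem~\ref{prop4} from the mean-field overlap gap property (Theorem~\ref{Thm2}) by a Guerra--Toninelli-type interpolation that compares the coupled diluted model to a pair of coupled Gaussian mean-field Hamiltonians, and then to upgrade the resulting expected-value bound to a high-probability statement by concentration of measure. Throughout, I fix $a,b,\eta$ as in Theorem~\ref{Thm2} and set
\[
S_{a,b}=\bigl\{(\sigma^1,\sigma^2)\in\Sigma^N\times\Sigma^N:|R_{1,2}|\in[a,b]\bigr\}.
\]
The first step is a reduction to a coupled mean-field estimate. I introduce independent mean-field $K$-spin Hamiltonians $H_0,H^1,H^2$ as in \eqref{eqn:HSK} and set $\widetilde H^{\ell}(\sigma)=\sqrt{t}\,H_0(\sigma)+\sqrt{1-t}\,H^{\ell}(\sigma)$ for $\ell=1,2$. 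A direct covariance computation shows that the pair $(H_{\G^1_N},H_{\G^2_N})$ and the pair $(\sqrt{\lambda}\,\widetilde H^{1},\sqrt{\lambda}\,\widetilde H^{2})$ match to leading order; in particular the cross-covariance at $(\sigma^1,\sigma^2)$ is $t\lambda N R_{1,2}^{K}$ in both cases. Running the Guerra--Toninelli interpolation in parallel on the two replicas, as in \cite{DMS,Sen,ChenP17,PKsat}, gives for any constraint set $S$
\[
\frac{1}{N}\,\e\max_{(\sigma^1,\sigma^2)\in S}\bigl(H_{\G^1_N}(\sigma^1)+H_{\G^2_N}(\sigma^2)\bigr) \le \sqrt{\lambda}\cdot\frac{1}{N}\,\e\max_{(\sigma^1,\sigma^2)\in S}\bigl(\widetilde H^{1}(\sigma^1)+\widetilde H^{2}(\sigma^2)\bigr) + O(\lambda^{1/3}).
\]

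Next I would establish the coupled mean-field overlap gap uniformly in $t\in[0,1]$, namely
\[
\frac{1}{N}\,\e\max_{(\sigma^1,\sigma^2)\in S_{a,b}}\bigl(\widetilde H^{1}(\sigma^1)+\widetilde H^{2}(\sigma^2)\bigr) \le 2P(K)-\tfrac{\eta}{2}\qquad\text{for all }t\in[0,1].
\]
At $t=1$ one has $\widetilde H^{1}=\widetilde H^{2}=H_0$, so this is exactly Theorem~\ref{Thm2}. For $t\in[0,1)$ the two replicas are only partially correlated, and I would apply a zero-temperature two-replica Guerra--Talagrand replica-symmetry-breaking bound in which the Parisi-type ansatz tracks the overlap $R_{1,2}$ together with the coupling weight $\sqrt{t}$; the cross-covariance contribution is $tR_{1,2}^{K}$, so the variational functional depends continuously on $t$, and since the bound is strict at $t=1$ (by Theorem~\ref{Thm2}) it remains strict uniformly on $[0,1]$ after possibly shrinking $\eta$. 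Combined with the interpolation inequality above, this yields, for all sufficiently large $\lambda$,
\[
\frac{1}{N}\,\e\max_{S_{a,b}}\bigl(H_{\G^1_N}(\sigma^1)+H_{\G^2_N}(\sigma^2)\bigr) \le \sqrt{\lambda}\bigl(2P(K)-\tfrac{\eta}{4}\bigr).
\]
Choosing $\eta_0<\eta/16$, if two $\eta_0$-optimal $\sigma^{1},\sigma^{2}$ existed with $|R_{1,2}|\in[a,b]$ their summed cut densities would exceed $\sqrt{\lambda}(2P(K)-\eta/4)N$, a contradiction in expectation.

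To upgrade to the claimed high-probability event I would use McDiarmid's bounded-differences inequality applied conditionally on the Poisson counts $(|E_t|,|E_{t,1}|,|E_{t,2}|)$: each edge of $\G^{\ell}_N$ perturbs $\max_\sigma H_{\G^{\ell}_N}(\sigma)$ by at most $2$, so the restricted and unrestricted maxima concentrate around their conditional means with Gaussian rate $\exp(-cN)$, with $c$ depending only on $\eta_0$ (not on $\lambda$, since the scale $\sqrt{\lambda}$ in the threshold cancels the $\lambda N$ in the sum of squared increments). Combining with a standard Poisson tail on the edge counts gives the $1-Le^{-N/L}$ conclusion with $L=L(\eta_0)$. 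Lemma~\ref{prop3} is obtained as a byproduct of the same interpolation: applying it under the additional constraint $|m(\sigma)|>\lambda^{-1/(2K)}(4\eta_0)^{1/K}$ and comparing to the corresponding magnetization-restricted mean-field ground state, which by a Parisi-type lower bound is not nearly optimal, produces the stated exclusion.

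The main obstacle is the uniform-in-$t$ control in the second paragraph. A direct Gaussian comparison to $t=1$ does not work: the auto-covariances of $\widetilde H^{1},\widetilde H^{2}$ match $H_0$, but the cross-covariance shrinks by the factor $t$, and Slepian/Sudakov--Fernique go in the wrong direction. One therefore needs a genuine two-replica zero-temperature Guerra--Talagrand bound whose Parisi functional depends on $t$, and the hardest case is $t=0$, where the two Hamiltonians are independent and one must still show that enforcing $|R_{1,2}|\in[a,b]$ between two independent mean-field systems costs a macroscopic amount of energy. This is the technical crux, and I would expect it to be handled by the same Parisi/Guerra--Talagrand machinery invoked for the proof of Theorem~\ref{Thm2}, suitably lifted to the two-replica setting.
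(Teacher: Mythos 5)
Your overall architecture matches the paper's proof exactly: interpolate the coupled diluted pair $(H_{\G^1_N},H_{\G^2_N})$ against the coupled Gaussian pair $\widetilde H^\ell=\sqrt{t}H_0+\sqrt{1-t}H^\ell$ with matching covariances (this is Lemma~\ref{lem3}, whose magnetization term is nonpositive for even $K$ and can be dropped for an upper bound), establish a coupled mean-field overlap gap uniformly in $t$, and finish with Poisson plus Azuma/McDiarmid concentration. The concentration step and the derivation of Lemma~\ref{prop3} as a byproduct of the same interpolation are also as in the paper.

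The genuine gap is the step you yourself flag as the crux: the bound
$N^{-1}\e\max_{|R_{1,2}|\in[a,b]}(\widetilde H^1(\sigma^1)+\widetilde H^2(\sigma^2))\le 2P(K)-\mathrm{const}$ for \emph{all} $t\in[0,1]$. Theorem~\ref{Thm2} only gives $t=1$, and your justification ``since the bound is strict at $t=1$ it remains strict uniformly on $[0,1]$ after possibly shrinking $\eta$'' is not valid: continuity of the functional in $t$ extends strictness only to a neighbourhood of $t=1$, not to the whole interval. What is actually needed is strictness at \emph{every} $t\in[0,1]$ separately, after which continuity plus compactness of $[0,1]$ upgrades pointwise strictness to a uniform gap $f(t)<2P(K)-6\eta_0$. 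The paper obtains the pointwise statement for $t\in[0,1)$ not by a new argument but by invoking the disorder chaos results of Chen--Handschy--Lerman and Chen--Panchenko (\cite[Theorem 2]{ChenHL}, \cite[Theorem 2]{ChenP17}), which are precisely the two-replica zero-temperature Guerra--Talagrand bounds you anticipate; note also that, contrary to your assessment, the $t=0$ case is the \emph{easiest} there, since for independent systems the derivative of the Guerra--Talagrand bound in the Lagrange multiplier equals $-q$, so any nonzero overlap constraint is immediately penalized. Your proposal is therefore a correct skeleton with the central lemma left unproved; filling it requires either reproving the two-replica Guerra--Talagrand bound for correlated copies or citing the existing disorder chaos theorems, together with the continuity--compactness argument to make the gap uniform in $t$.
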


To run local algorithms on the pair of hypergraphs $\G_{N,t}^1$ and $\G_{N,t}^2$ defined above, we need to define their labelings appropriately, as follows. Let us say that a vertex $v \in [N]$ is shared if it is incident to some shared edge in $E_t$, and non-shared otherwise. Consider a pair of independent random labellings
$X^1$ and $Y$ indexed by $[N]$. Define a new random labelling $X^2$ by
\begin{equation*}
X^2(v) = 
\begin{cases}
X^1(v), & \text{if}\; v \; \text{is shared} \\
Y(v), & \text{if}\; v \; \text{is non-shared.}
\end{cases}
\end{equation*}
Given a factor $f\in\FF_r$, let $\sg^\ell=\sg(f,X^\ell)$ be its output on $\G_{N,t}^\ell$ for $\ell=1,2$ and let
\begin{equation}
R_N(f,t) = \frac{1}{N}\sum_{i=1}^N \sg_i^1\sg_i^2
\end{equation}
be their overlap. 

In the proof of Lemma \ref{lemma:Factor-concentration}, we used a classical fact that a neighbourhood of a vertex in the Erd\H{o}s-R\'{e}nyi hypergraph converges to a Galton-Watson (hyper-)tree and, for any two distinct vertices, their neighbourhood configurations are asymptotically independent. Of course, the same result holds for coupled hypergraphs, where in the limit we get coupled Galton-Watson processes defined as follows. 

We start with a given vertex, call it a root, which is common to both trees. We generate Poisson$(t\lambda K)$ number of `shared' offsprings, where offspring of a vertex is a hyperedge containing this vertex and $K-1$ new vertices, and we also generate two independent Poisson$((1-t)\lambda K)$ sets of `non-shared' offsprings for each tree. The new vertices in `shared' edges are labelled `shared' and new vertices in `non-shared' edges are labelled `non-shared'. The root itself is labelled `non-shared' if 
Poisson$(t\lambda K) = 0$, otherwise, it is labelled `shared' (this will be important for the labelling process below). From the second generation on, any `non-shared' vertex produces Poisson$(\lambda K)$ number of only `non-shared' edges, and any `shared' vertex produces Poisson$(t\lambda K)$ number of `shared' edges and two independent Poisson$((1-t)\lambda K)$ sets of `non-shared' edges, one for each tree. 

Notice that the term `shared' does not correspond to the edges $E_t$ in the construction $\G_{N,t}^1$ and $\G_{N,t}^2$ above, but rather has a meaning of `disjoint' in a local sense. Namely, for a given vertex $v$, its neighbourhoods $B_r(\G_{N,t}^1,v)$ and $B_r(\G_{N,t}^2,v)$ can have edges belonging to $E_t$ for any $r$. However, locally, for a finite $r$, the vertices in these two neighbourhood are disjoint with high probability, unless they are all connected to $v$ through edges in $E_t$. In the sense of local neighbourhoods, in the limit `non-shared' corresponds to `disjoint' for finite $N$. In particular, since disjoint vertices are labelled by independent i.i.d.~labelings, the labelings of two Galton-Watson trees will be the same for `shared' vertices and independent for `non-shared' vertices.

First of all, since for any two distinct vertices $u,v\in [N],$ their neighbourhoods $(B_r(\G_{N,t}^1,v), B_r(\G_{N,t}^2,v))$ and $(B_r(\G_{N,t}^1,u), B_r(\G_{N,t}^2,u))$ are asymptotically independent labelled coupled Galton-Watson processes with $r$ generation described above, the proof identical to Lemma \ref{lemma:Factor-concentration} gives the following.
\begin{lemma}\label{lemma:overlap-concentration}
	For every $\lm,r,f\in\FF_r$, $t\in[0,1]$ and $\epsilon>0,$
	\begin{align*}
	\lim_{N\rightarrow\infty}
	\p\Bigl(\bigl|R_N(f,t) -\e R_N(f,t) \bigr)\bigr|\ge \epsilon \Bigr)=0.
	\end{align*}
\end{lemma}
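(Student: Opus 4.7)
The plan is to mirror the proof of Lemma \ref{lemma:Factor-concentration} almost verbatim, with the per-vertex atoms $Y_v := \sg_v^1\sg_v^2$ replacing the local energies $H_{\G_N,v}(\sg(f,X))$. Writing $R_N(f,t) = N^{-1}\sum_{v\in[N]} Y_v$, I would expand the variance as
$$\e\bigl(R_N(f,t) - \e R_N(f,t)\bigr)^2 = \frac{1}{N^2}\sum_{u,v\in[N]} \Cov(Y_u, Y_v),$$
bound the diagonal contribution by $O(1/N)$ using $|Y_v|\le 1$, and reduce to showing $\Cov(Y_u,Y_v)\to 0$ for a single pair of distinct vertices (say $u=1$, $v=2$) by symmetry.

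For the off-diagonal bound, I would observe that $Y_v$ is a measurable function of the coupled pair of neighborhoods $(B_r(\G_{N,t}^1,v), B_r(\G_{N,t}^2,v))$ together with the restrictions of the labellings $X^1, X^2$ to the vertices therein. The standard Erd\H{o}s-R\'enyi local-convergence argument, applied to the superposition $\G_{N,t}^1\cup\G_{N,t}^2$ (an Erd\H{o}s-R\'enyi hypergraph of connectivity at most $(2-t)\lambda$), yields two facts simultaneously: first, for a fixed vertex $v$, the pair $(B_r(\G_{N,t}^1,v), B_r(\G_{N,t}^2,v))$ converges in distribution to the coupled Galton-Watson hypertree described just above the lemma; second, for two distinct vertices $u\ne v$ the two coupled $(r+1)$-neighborhoods are vertex-disjoint with probability $1-o(1)$. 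On that high-probability event, the fact that $X^1$ and $Y$ are i.i.d. across $[N]$ forces the labelled coupled neighborhoods, and hence $Y_u$ and $Y_v$, to be independent. Uniform integrability is automatic since $|Y_v|\le 1$, so $\Cov(Y_u,Y_v)\to 0$.

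Putting the pieces together, the variance of $R_N(f,t)$ is $o(1)$, and Chebyshev's inequality delivers the conclusion. The one point requiring care, compared to the uncoupled case, is that both the graph coupling (via the shared edges $E_t$) and the labelling coupling (via $X^2 = X^1$ on shared vertices and $X^2 = Y$ otherwise) must be tracked simultaneously; this is the main technical step. However, once the two pairs of $(r+1)$-neighborhoods around $u$ and $v$ are vertex-disjoint, the shared/non-shared dichotomy becomes irrelevant, because independent i.i.d. labels on disjoint vertex sets produce independent outputs regardless of the coupling rule. Consequently, no ingredient beyond the edge-counting estimate already invoked in the proof of Lemma \ref{lemma:Factor-concentration} is needed.
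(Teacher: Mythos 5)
Your proposal is correct and follows essentially the same route as the paper, which simply notes that the coupled labelled neighbourhoods of two distinct vertices are asymptotically independent coupled Galton--Watson processes and then invokes the variance/Chebyshev argument of Lemma \ref{lemma:Factor-concentration} verbatim. Your additional observations (boundedness of $Y_v=\sg_v^1\sg_v^2$ making integrability trivial, and the irrelevance of the shared/non-shared dichotomy once the neighbourhoods are vertex-disjoint) are accurate refinements of that same argument.
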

Next, if we write, by symmetry, $\e  R_N(f,t) = \e \sigma_1^1 \sigma_1^2$ then the convergence of $(B_r(\G_{N,t}^1,1),B_r(\G_{N,t}^2,1))$ with its labelings to labelled coupled Galton-Watson process implies that 
\begin{equation}\label{GW-overlap}
\lim_{N\to\infty} \e  R_N(f,t) = \e \sigma^1 \sigma^2 \triangleq R(f,t),
\end{equation}
where $\sigma^1$ and $\sigma^2$ are outputs of the factor $f$ at the root of these two Galton-Watson trees. Since there are countably many configurations for this coupled tree process and, obviously, the probability of each configuration is continuous in $t$, we have the following.
\begin{lemma}\label{lemma:overlap-continuity}
	The function $R(f,t)$ is continuous in $t.$
\end{lemma}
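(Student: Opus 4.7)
The plan is to write $R(f,t)$ as an explicit convex combination whose atoms are $t$-independent and whose weights are continuous in $t$. Since $f\in\FF_r$, the pair $(\sigma^1,\sigma^2)$ at the common root depends only on the pair of rooted $r$-neighbourhoods $(B_r(T^1,\rt),B_r(T^2,\rt))$ of the coupled Galton--Watson hypertrees at that root, together with (i) the shared vs.\ non-shared label on each of their vertices, which dictates whether the $[0,1]$-decorations $X^1$ and $X^2$ agree at that vertex or are independent uniforms, and (ii) the $[0,1]$-decorations themselves.

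Isomorphism classes of such finite, rooted, coupled, shared/non-shared decorated hypergraph pairs form a countable set $\T$. I would then expand
\begin{equation*}
R(f,t) \;=\; \sum_{\tau\in\T} p_\tau(t)\, q_\tau,
\end{equation*}
where $p_\tau(t)$ is the probability that the coupled $r$-neighbourhood structure at the root equals $\tau=(\tau^1,\tau^2)$, and $q_\tau = \e [\sigma^1\sigma^2 \mid \text{structure}=\tau]$ is obtained by integrating $f(\tau^1,\rt,x^1)\,f(\tau^2,\rt,x^2)$ against the product of uniform $[0,1]$-measures on vertex labels, subject to the shared/non-shared identification. Crucially, $q_\tau$ carries no $t$-dependence and satisfies $|q_\tau|\le 1$, while $p_\tau(t)$ is a finite product of Poisson point masses of the form $e^{-t\lambda K}(t\lambda K)^k/k!$ and $e^{-(1-t)\lambda K}((1-t)\lambda K)^k/k!$ --- one contribution per vertex of $\tau$ --- hence continuous (indeed analytic) in $t\in[0,1]$.

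To upgrade pointwise continuity of each summand to continuity of the sum, the one remaining ingredient is a uniform tail bound on the size of the coupled neighbourhood. For this I would use that each marginal $B_r(T^\ell,\rt)$ is an $r$-truncated $\pos(\lambda K)$ Galton--Watson hypertree whose distribution is independent of $t$; the parameter $t$ only affects the joint coupling of the two trees. Consequently, the total vertex count of the coupled structure is stochastically dominated, uniformly in $t\in[0,1]$, by twice the size of such a tree, whose expectation is bounded by $\sum_{j=0}^{r}(\lambda K)^j<\infty$. Given $\eps>0$, I may therefore fix a finite $\T_M\subset\T$ (all $\tau$ with at most $M$ vertices) so that $\sum_{\tau\notin\T_M}p_\tau(t)<\eps$ for every $t\in[0,1]$; combined with $|q_\tau|\le 1$ and the (finite-sum) continuity of $\sum_{\tau\in\T_M}p_\tau(t)q_\tau$, this yields continuity of $R(f,t)$. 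The only mildly delicate point is this uniform-in-$t$ tail bound, and it reduces immediately to the marginal observation above.
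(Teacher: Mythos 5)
Your argument is correct and is essentially the paper's proof: the paper likewise decomposes $R(f,t)$ over the countably many configurations of the coupled Galton--Watson tree process and notes that each configuration's probability is continuous in $t$. You additionally supply the uniform-in-$t$ tail bound (via the $t$-independent $\pos(\lambda K)$ marginal tree size) needed to pass continuity through the infinite sum, a step the paper treats as obvious.
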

Notice that $R(f,1)=1$, since for $t=1$ the two trees are completely `shared' with probability one, and $R(f,0)=(\e \sigma^1)^2$, since for $t=0$ the trees are completely independent, or `non-shared', with probability one. Moreover, $\e \sigma^1$ is the limit of magnetization,
\begin{equation}\label{mag-limit}
\lim_{N\to\infty} \e  m(\sigma(f,X)) = \lim_{N\to\infty}\e  \sigma_1(f,X) = \e \sigma^1.
\end{equation}
With this, we are ready to prove Theorem~\ref{thm:laf}.

\subsection{Completing the proof of Theorem~\ref{thm:laf}}

Consider $a,b,\eta_0$ defined in Theorem \ref{prop4} and let $\lambda_0$ be the larger of the two values $\lambda_0$ defined in Theorem \ref{prop4} and Lemma \ref{Lem-magnet}. 

The proof of Theorem \ref{thm:laf} below will proceed by contradiction. Assume that, for $\eta=\eta_0/2$, we can find 
$$
\lambda\geq \max(\lambda_0, a^{-K}),
$$
for which there exists a factor $f\in\FF_r$ for some $r$ such that
\begin{equation}\label{contradiction3}
\e \frac{H_{\G_N}(\sg(f,X))}{N} > \sqrt{\lambda}(P(K)-\eta)
\end{equation}
for arbitrarily large $N$. Lemma \ref{Lem-magnet} together with (\ref{mag-limit}) implies that $|\e \sigma^1|\leq \lambda^{-1/(2K)}$ and, therefore, 
$$
R(f,0)=(\e \sigma^1)^2\leq \lambda^{-1/K}<a.
$$ 
Since $R(f,t)$ in (\ref{GW-overlap}) is continuous, $R(f,0)<a$ and $R(f,1)=1$, there exists $t\in (0,1)$ such that $R(f,t)=(a+b)/2.$ For this $t$, (\ref{GW-overlap}) and Lemma \ref{lemma:overlap-concentration} imply that, with high probability, the overlap $R_N(f,t)$ will belong to the interval $(a,b)$ for large enough $N.$ On the other hand, (\ref{contradiction3}) and Lemma \ref{lemma:Factor-concentration} imply that 
$$
\frac{H_{\G^{\ell}_N}(\sg^{\ell})}{N} \geq \sqrt{\lambda}\bigl(P(K)-\eta_0\bigr),
$$
with probability going to one as $N\to\infty.$ By Theorem \ref{prop4}, with high probability we must have that $R_N(f,t)$ does not belong to the interval $[a,b]$, which is a contradiction. \qed

\section{Overlap gap in the diluted model} \label{sec:Thm1proof}

This section proves Theorem \ref{prop4}, as well as Theorem \ref{Thm1} as a simpler illustration. In order to prove these results, we will first use the Guerra-Toninelli interpolation from \cite{GT} to connect the diluted $K$-spin model to the mean field $K$-spin model. This connection has been used several times in the literature \cite{DMS, Sen, PKsat, JKS, ChenP17}. Theorem \ref{Thm1} will then be deduced from the conclusion of Theorem \ref{Thm2}.

Before starting with the proof we set some notation that will be used throughout the remaining Sections \ref{sec:Thm1proof} and \ref{sec:Thm2proof}. For $\lm > 0$ let $H_{\lm}$ denote the Hamiltonian of the diluted $K$-spin model with connectivity $\lm$ on $N$ vertices, that is, the $K$-spin Hamiltonian for the hypergraph $\mathrm{ER}(K,\lm,N)$. Recall the Hamiltonians $H_{\G_{N,t}^1}$ and $H_{\G_{N,t}^2}$ from \eqref{eqn:cplmodel}. Denote them respectively by $H_\lambda^1$ and $H_{\lambda}^2$ (we keep the dependence on the coupling parameter $t$ implicit). We also denote by $H$ the Hamiltonian of the mean field, or fully connected, $K$-spin model on $N$ vertices as defined in \eqref{eqn:HSK}.  Let $H'$ and $H''$ be independent copies of $H.$ For $t\in [0,1]$, set 
\begin{align}
\begin{split}
\label{label:comeback}
H^1&=\sqrt{t}H+\sqrt{1-t}H',
\\
H^2&=\sqrt{t}H+\sqrt{1-t}H''.
\end{split}
\end{align}
We will assume throughout that $N$ is even and suppress the dependence on $t$ and $N$ for clarity. 

First we establish a key interpolation lemma. Let $A\subseteq \Sigma^N,$ $S\subseteq [0,1]$, $t\in[0,1]$ and $\lambda>0$ be fixed.
For $s\in [0,1]$, consider the interpolating Hamiltonian
\begin{equation}
H(s,\sigma^1,\sigma^2) = \sum_{\ell=1}^2 \bigl(\delta H_{\lambda(1-s)}^\ell(\sigma^\ell) +\sqrt{s}\beta H^\ell(\sigma^\ell)\bigr),
\label{Hampert}
\end{equation}
where the Hamiltonians $H_{\lambda(1-s)}^\ell$ correspond to the coupled diluted $K$-spin model with the coupling parameter $t$ and connectivity parameter $\lambda(1-s)$. The parameters $\delta>0$ and $\beta>0$ will be chosen later. Let
\begin{equation}
\varphi(s) = \frac{1}{N}\, \e \log \, \sum_{\sg^1,\sg^2 \in A : |R_{1,2}| \in S} \exp H(s,\sigma^1,\sigma^2).
\label{varphis}
\end{equation}
Let us denote by $\la\,\cdot\,\ra_s$ the average with respect to the Gibbs measure
\begin{equation}
G_s(\sigma^1,\sigma^2) = \frac{\exp H(s,\sigma^1,\sigma^2)}{\sum_{{\sg^1,\sg^2 \in A : |R_{1,2}| \in S}} \exp H(s,\sigma^1,\sigma^2)}
\label{Gees}
\end{equation}
defined on $\{(\sigma^1,\sigma^2)\in A\times A \,:\, |R_{1,2}|\in S\}$. The following holds.
\begin{lemma} \label{lem3}
	For any $A\subseteq \Sigma^N,$ $S\subseteq [0,1]$, $t\in[0,1]$ and $\lambda>0,$ 
	\begin{align}
	\begin{split}
	\label{lem3:eq1}
	&\frac{1}{N}\e\max_{\sg^1,\sg^2 \in A : |R_{1,2}| \in S}\bigl(H_\lambda^1(\sigma^1)+H_\lambda^2(\sigma^2)\bigr)
	\\
	&=\,\, \frac{\sqrt{\lambda}}{N} \, \e\max_{\sg^1,\sg^2 \in A : |R_{1,2}| \in S}\bigl(H^1(\sigma^1)+H^2(\sigma^2)\bigr)\\
	&\qquad -\lambda\int_0^1\e \bigl\la m(\sigma^{1})^K+m(\sigma^{2})^K\bigr\ra_sds
	+O\bigl(\lambda^{1/3}\bigr),
	\end{split}
	\end{align}
	where $O\bigl(\lambda^{1/3}\bigr)\leq L \lambda^{1/3}$ for some $L>0$ independent of $N,A,S,t,\lambda.$
\end{lemma}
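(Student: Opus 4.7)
My plan is a Guerra-Toninelli interpolation along $s\in[0,1]$ from the coupled diluted model (at $s=0$) to the coupled mean-field model (at $s=1$), with $\beta = \delta\sqrt{\lambda}$ and $\delta>0$ a parameter to be optimized at the end. The elementary sandwich $\max_\Sigma f \leq \tfrac{1}{\delta}\log\sum_\Sigma e^{\delta f} \leq \max_\Sigma f + \tfrac{\log|A\times A|}{\delta}$, applied at $s=0$ and at $s=1$ (where $H(s,\Sigma)$ has the form $\delta g(\Sigma)$ with $g$ independent of $\delta$ after factoring out $\delta$), together with $\log|A\times A| \leq 2N\log 2$, identifies $\varphi(0)/\delta$ with the left-hand side of \eqref{lem3:eq1} and $\varphi(1)/\delta$ with the mean-field term $\tfrac{\sqrt{\lambda}}{N}\,\e\max(H^1(\sigma^1)+H^2(\sigma^2))$, each up to an additive $O(1/\delta)$ that is uniform in $N$. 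Using $\varphi(0)-\varphi(1) = -\int_0^1\varphi'(s)\,ds$, the lemma reduces to showing $\tfrac{1}{\delta}\int_0^1\varphi'(s)\,ds = \lambda\int_0^1 \e\la m(\sigma^1)^K+m(\sigma^2)^K\ra_s\,ds + O(\lambda^{1/3})$.

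The derivative $\varphi'(s)$ decomposes into a Gaussian piece (from $\sqrt{s}\beta H^\ell$) and a Poisson piece (from $\delta H^\ell_{\lambda(1-s)}$). Gaussian integration by parts, using the coupled covariance kernel $\e[H^\ell(\sigma)H^{\ell'}(\tau)] = N R_{\sigma,\tau}^K$ if $\ell=\ell'$ and $tN R_{\sigma,\tau}^K$ otherwise (which follow from \eqref{label:comeback}), yields
\[
\partial^G_s\varphi(s) \,=\, \tfrac{\beta^2}{2}\,\e\bigl\la 2+2t R_{1,2}^K - R_{1,1'}^K - R_{2,2'}^K - t\bigl(R_{1,2'}^K+R_{2,1'}^K\bigr)\bigr\ra_s
\]
with the average over two independent Gibbs replicas of $(\sigma^1,\sigma^2)$. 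The Poisson piece, using $\partial_\mu\e F(X)=\e[F(X+1)-F(X)]$ for $X\sim\pos(\mu)$ applied separately to the three independent Poisson edge pools (shared, non-shared for each copy), yields
\[
\partial^P_s\varphi(s) \,=\, -\lambda\,\e\,\e_e\Bigl[t\log\la e^{\delta(\theta^1+\theta^2)}\ra_s + (1-t)\sum_{\ell=1,2}\log\la e^{\delta\theta^\ell}\ra_s\Bigr]
\]
with $\theta^\ell=\theta(\sigma^\ell,e)$ and $e$ sampled uniformly from $[N]^K$. The magnetization integral is extracted using the identity $\log\la e^{\delta\theta^\ell}\ra_s = \log\cosh\delta+\log(1+\tanh\delta\la\theta^\ell\ra_s)$ (valid since $\theta\in\{\pm 1\}$) together with the key averaging $\e_e\la\theta(\sigma^\ell,e)\ra_s = -\e\la m(\sigma^\ell)^K\ra_s$ obtained by averaging the product $-\sigma^\ell_{v_1}\cdots\sigma^\ell_{v_K}$ over uniform vertices.

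In the zero-temperature limit $\delta\to\infty$ the Gibbs measure $\la\cdot\ra_s$ concentrates on its argmax, so $\e\la R_{a,a'}^K\ra_s\to 1$ for same-copy replicas and $\e\la R_{1,2'}^K\ra_s\to\e\la R_{1,2}^K\ra_s$, which forces the Gaussian bracket to collapse to zero; simultaneously $\tanh\delta\to 1$ and $\tfrac{1}{\delta}\log\la e^{\delta\theta^\ell}\ra_s\to\theta(\sigma^{\ell*},e)$, producing exactly the claimed magnetization integral from $\partial^P_s\varphi/\delta$. The main obstacle is the quantitative error analysis at finite $\delta$: the Gaussian prefactor $\beta^2/(2\delta)=\delta\lambda/2$ is large, so one must bound $\e\la 1-R_{a,a'}^K\ra_s$ by a small negative power of $\delta$; and the Taylor expansion of $\log(1+\tanh\delta\la\theta^\ell\ra_s)$ beyond first order produces multi-replica overlap moments $\e\la M_{1,\ldots,k}^K\ra_s$ that must be matched against the Gaussian residual. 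Following the error bookkeeping of \cite{DMS, Sen, PKsat, JKS, ChenP17} and optimizing $\delta$ as an appropriate power of $\lambda$ yields the total error $O(\lambda^{1/3})$ as claimed.
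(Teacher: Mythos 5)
Your setup (the interpolating Hamiltonian, the Gaussian covariance structure $\e H^\ell(\sigma)H^{\ell'}(\tau)=NR^K$ or $tNR^K$, the Poisson differentiation over the three independent edge pools, and the extraction of the magnetization via $\e_e\la\theta(\sigma^\ell,e)\ra_s=-\e\la m(\sigma^\ell)^K\ra_s$) matches the paper's proof. The decisive divergence, and the gap, is in how you dispose of the replica-overlap terms $\e\la R_{1,1'}^K\ra_s$, $\e\la R_{2,2'}^K\ra_s$, etc., produced by the Gaussian integration by parts. You propose to take $\delta\to\infty$ and argue that the Gibbs measure concentrates on its argmax, so that $\e\la R_{a,a'}^K\ra_s\to 1$ for two independent replicas from the same copy, while the prefactor $\beta^2/(2\delta)=\delta\lambda/2$ diverges; you would therefore need $\e\la 1-R_{a,a'}^K\ra_s$ to decay faster than $1/(\delta^2\lambda)$. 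This is false for these models: at low temperature the overlap of two independent replicas does \emph{not} concentrate at $1$ — the set of near-maximizers is exponentially large and supports a nontrivial overlap distribution (this is precisely the replica symmetry breaking phenomenon that the rest of the paper exploits). No bound of the required form holds uniformly in $s$ and $N$, so the term $\delta\lambda\,\e\la 1-R_{1,1'}^K\ra_s$ cannot be controlled in your regime.

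The paper works in the opposite regime, $\delta$ \emph{small}, and kills the overlap terms by an exact algebraic cancellation rather than by concentration. Choosing $\beta=\sqrt{\lambda}\,\mbox{th}(\delta)$ (not $\delta\sqrt{\lambda}$), the $n=2$ term in the Taylor expansion of $\log(1-\mbox{th}(\delta)\Delta)$ in the Poisson derivative produces $+\frac{\lambda\,\mbox{th}(\delta)^2}{2}\e\la (R_{1,1'})^K+(R_{2,2'})^K+t(R_{1,2'})^K+t(R_{2,1'})^K\ra_s$, which cancels the Gaussian contribution identically for every $s$ and every $\delta$; the same happens for the $2t\e\la(R_{1,2})^K\ra_s$ term against the $n=1$ cross term. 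What remains is the magnetization term with coefficient $\lambda\,\mbox{th}(\delta)/\delta=\lambda+O(\lambda\delta^2)$, the $n\geq 3$ Taylor remainders of size $O(\lambda\delta^3)$, the constant $-2\lambda\log\ch\delta+\lambda\,\mbox{th}(\delta)^2=O(\lambda\delta^4)$, and the sandwich error $O(1/\delta)$; setting $\delta=\lambda^{-1/3}$ gives $O(\lambda^{1/3})$. To repair your argument you should abandon the zero-temperature limit in $\delta$ entirely and instead tune $\beta$ so that the second-order Poisson terms cancel the Gaussian terms exactly.
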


\begin{proof} The proof will make use of the interpolating Hamiltonian \eqref{Hampert} with appropriate choices for $\beta$ and $\delta$ in terms of $\lambda$.
	First, let us compute the derivative $\varphi'(s)=\mathrm{I}+\mathrm{II}$ as a sum of two terms coming from the Gaussian integration by parts and Poisson integration by parts. Let us denote the i.i.d.~samples (replicas) from $G_s$ by $(\sigma^{j,1},\sigma^{j,2})$ for $j \geq 1$, and denote the overlap
	$$
	R_{\ell,\ell'}^{j,j'} = \frac{1}{N}\sum_{i=1}^N \sigma_i^{j,\ell}\sigma_i^{j',\ell'}.
	$$
	Taking the derivative in $\sqrt{s}$ and using Gaussian integration by parts (see e.g.~\cite[Section 1.2]{SKmodel}) gives
	$$
	\mathrm{I} = \frac{\beta^2}{2}\Bigl(2+ 2t\e\bigl\la (R_{1,2}^{1,1})^K\bigr\ra_s
	- \e\bigl\la (R_{1,1}^{1,2})^K+(R_{2,2}^{1,2})^K+t(R_{1,2}^{1,2})^K+t(R_{2,1}^{1,2})^K\bigr\ra_s\Bigr).
	$$
	The rest of the calculation below is as in the proof of the Franz-Leone upper bound for the free energy in diluted models in \cite{FL, PT}. In order to differentiate $\varphi(s)$ with respect to the Poisson random variables involved, we use that 
	$$
	\frac{d}{d s}\e f\big(\pos(s)\big) = \e f\big (\pos(s)+1 \big) - \e f\big (\pos(s) \big)
	$$
	for a Poisson random variable $\pos(s)$ having mean $s$. If a function depends on several independent Poisson random variables, by the product rule, the derivative will be a sum of such differences, where $+1$ is added to one Poisson random variable at a time. Hence, with the notation 
	$\sum :=\sum_{\sg^1,\sg^2 \in A : |R_{1,2}| \in S},$ 
	\begin{align*}
	\mathrm{II} &=  -t\lambda\Bigl(\e\log \sum  \exp H^+(s,\sigma^1,\sigma^2)
	- \e\log \sum\exp H(s,\sigma^1,\sigma^2)\Bigr)\\
	&-(1-t)\lambda\Bigl(\e\log \sum  \exp H_1^+(s,\sigma^1,\sigma^2)
	- \e\log \sum\exp H(s,\sigma^1,\sigma^2)\Bigr)\\
	&-(1-t)\lambda\Bigl(\e\log \sum  \exp H_2^+(s,\sigma^1,\sigma^2)
	- \e\log \sum\exp H(s,\sigma^1,\sigma^2)\Bigr),
	\end{align*}
	where $H^+(s,\sigma^1,\sigma^2)$ includes one additional common edge $(i_1,\ldots,i_K)$ for both configurations $\sigma^1,\sigma^2$ (this new edge is chosen independently and uniformly at random), and $H_1^+(s,\sigma^1,\sigma^2)$ and $H_2^+(s,\sigma^1,\sigma^2)$ include an additional edge on either the first or second configurations respectively. That is,
	\begin{align*}
	H^+(s,\sigma^1,\sigma^2)&= H(s,\sigma^1,\sigma^2) -\delta \sigma_{i_{1}}^1\cdots\sigma^1_{i_{K}} -\delta \sigma^2_{i_{1}}\cdots \sigma^2_{i_{K}},\\
	H_1^+(s,\sigma^1,\sigma^2)&= H(s,\sigma^1,\sigma^2) -\delta \sigma_{i_{1}}^1\cdots\sigma^1_{i_{K}},\\
	H_2^+(s,\sigma^1,\sigma^2)&= H(s,\sigma^1,\sigma^2) -\delta \sigma_{i_{1}}^2\cdots\sigma^2_{i_{K}}.
	\end{align*}
	We can then rewrite the above as
	\begin{align*}
	\mathrm{II}& =  -t\lambda \e\log \bigl\la
	\exp\bigl(- \delta \sigma^1_{i_{1}}\cdots \sigma^1_{i_{K}} -\delta \sigma^2_{i_{1}}\cdots \sigma^2_{i_{K}}\bigr)\bigr\ra_s\\
	&-(1-t)\lambda \e\log\bigl\la \exp\bigl(-\delta \sigma_{i_1}^1\cdots\sigma_{i_K}^1\bigr)\bigr\ra_s-(1-t)\lambda \e\log\bigl\la \exp\bigl(-\delta \sigma_{i_1}^2\cdots\sigma_{i_K}^2\bigr)\bigr\ra_s.
	\end{align*}
	Since the product of spins takes values $\pm 1$, we can represent
	$$
	\exp\bigl( - \delta \sigma^\ell_{i_{1}}\cdots \sigma^\ell_{i_{K}} \bigr)
	=
	\ch(\delta)\bigl(1-\myth(\delta)\sigma^\ell_{i_{1}}\cdots \sigma^\ell_{i_{K}}\bigr),
	$$
	and
	$$
	\exp\bigl(- \delta \sigma^1_{i_{1}}\cdots\sigma^1_{i_{K}} - \delta \sigma^2_{i_{1}}\cdots \sigma^2_{i_{K}} \bigr)
	= 
	\ch(\delta)^2\bigl(1-\myth(\delta)\Delta(\sigma^1,\sigma^2)\bigr)
	$$
	with the notation
	$$
	\Delta(\sigma^1,\sigma^2) = \sigma^1_{i_{1}}\cdots\sigma^1_{i_{K}} + \sigma^2_{i_{1}}\cdots \sigma^2_{i_{K}}
	-\myth(\delta)\sigma^1_{i_{1}}\cdots\sigma^1_{i_{K}}\sigma^2_{i_{1}}\cdots \sigma^2_{i_{K}},
	$$
	where $\ch()$ and $\myth()$ are hyperbolic cosine and tangent, respectively. 
	Expressing the logarithm by its Taylor series, we can rewrite
	$$
	\mathrm{II}=
	-2\lambda\log\ch\delta
	+\lambda \sum_{n\geq 1}\frac{\myth(\delta)^n}{n}\Bigl(t\e \bigl\la \Delta(\sigma^1,\sigma^2) \bigr\ra_s^n+(1-t)\sum_{\ell=1}^2\e\bigl\la \sigma_{i_1}^\ell\cdots\sigma_{i_K}^\ell\bigr\ra_s^n\Bigr).
	$$
	Using replicas, we can represent 
	$$
	\e \bigl\la \Delta(\sigma^1,\sigma^2) \bigr\ra_s^n
	=
	\e \bigl\la \prod_{j\leq n}\Delta(\sigma^{j,1},\sigma^{j,2}) \bigr\ra_s
	=
	\e \bigl\la \e' \prod_{j\leq n}\Delta(\sigma^{j,1},\sigma^{j,2}) \bigr\ra_s,
	$$
	where $\e'$ is the expectation with respect to the random indices $i_1, \ldots, i_K$, which are independent of the random
	variables in $\la\,\cdot\,\ra_s$. For $n=1$,
	$$
	\e'\sigma_{i_1}^\ell\cdots\sigma_{i_K}^\ell=m(\sigma^\ell)^K
	$$
	and
	$$
	\e' \Delta(\sigma^{1,1},\sigma^{1,2})
	= m(\sigma^{1,1})^K+m(\sigma^{1,2})^K-\myth(\delta) (R_{1,2}^{1,1})^K.
	$$
	Therefore, 
	\begin{align*}
	&\,\,\,
	t\e \bigl\la \Delta(\sigma^1,\sigma^2) \bigr\ra_s
	+(1-t)\e\bigl\la \sigma_{i_1}^1\cdots\sigma_{i_K}^1\bigr\ra_s
	+(1-t)\e\bigl\la \sigma_{i_1}^2\cdots\sigma_{i_K}^2\bigr\ra_s
	\\
	=&\,\,\,
	t\Bigl(\e \bigl\la m(\sigma^{1,1})^K+m(\sigma^{1,2})^K\bigr\ra_s-\myth(\delta) \e\bigl\la(R_{1,2}^{1,1})^K\bigr\ra_s\Bigr)
	\\
	&\,\,\,
	+(1-t)\e \bigl\la m(\sigma^{1,1})^K+m(\sigma^{1,2})^K\bigr\ra_s
	\\
	=&\,\,\,
	\e \bigl\la m(\sigma^{1,1})^K+m(\sigma^{1,2})^K\bigr\ra_s-t\myth(\delta) \e \bigl\la (R_{1,2}^{1,1})^K \bigr\ra_s
	\end{align*}
	For $n=2$, we compute similarly,
	$$
	\e \bigl\la \Delta(\sigma^1,\sigma^2) \bigr\ra_s^2
	=
	\e\bigl\la
	(R_{1,1}^{1,2})^K+ (R_{2,2}^{1,2})^K+ (R_{1,2}^{1,2})^K + (R_{2,1}^{1,2})^K
	\bigr\ra_s + \mathrm{III_1}
	$$
	and
	$$
	\sum_{\ell=1}^2\e\bigl\la \sigma_{i_1}^\ell\cdots\sigma_{i_K}^\ell\bigr\ra_s^2=\e\bigl\la \bigl(R_{1,1}^{1,2}\bigr)^K+\bigl(R_{2,2}^{1,2}\bigr)^K\bigr\ra_s,
	$$
	where $|\mathrm{III_1}|\leq L\myth(\delta).$ Finally, since $|\Delta(\sigma^1,\sigma^2)|\leq 2$ and $|\sigma_{i_1}^\ell\cdots\sigma_{i_K}^\ell|=1$, we can bound
	$$
	\Bigl|t\lambda \sum_{n\geq 3}\frac{\myth(\delta)^n}{n}
	\e \bigl\la \Delta(\sigma^1,\sigma^2) \bigr\ra_s^n\Bigr|
	\leq
	\lambda \sum_{n\geq 3}\frac{(2\myth(\delta))^n}{n}
	\leq L \lambda \delta^3
	$$
	and
	$$
	\Bigl|(1-t)\lambda \sum_{n\geq 3}\frac{\myth(\delta)^n}{n}\sum_{\ell=1}^{2}\e\bigl\la \sigma_{i_1}^\ell\cdots\sigma_{i_K}^\ell\bigr\ra_s^n\Bigr|\leq L\lambda \delta^3
	$$
	for small enough $\delta>0,$ where $L$ is a universal constant independent of everything. Collecting all the terms,
	\begin{align*}
	\mathrm{II}=&\,\,
	-2\lambda\log\ch\delta
	+\lambda \myth(\delta)\e \bigl\la m(\sigma^{1,1})^K+m(\sigma^{1,2})^K\bigr\ra_s-t\lambda \myth(\delta)^2 \e \bigl\la (R_{1,2}^{1,1})^K \bigr\ra_s
	\\ &
	+\frac{\lambda \myth(\delta)^2}{2}\e\bigl\la(R_{1,1}^{1,2})^K+(R_{2,2}^{1,2})^K+t(R_{1,2}^{1,2})^K+t(R_{2,1}^{1,2})^K\bigr\ra_s + O(\lambda\delta^3).
	\end{align*}
	Next, for a given $\lambda$ and $\delta$, we will define $\beta$ by
	\begin{equation}
	\beta = \sqrt{\lambda}\myth(\delta).
	\end{equation}
	With this choice, all the terms in $\varphi'(s) = \mathrm{I} + \mathrm{II}$ containing the overlaps $R_{\ell,\ell'}^{j,j'}$ cancel out and we get
	$$
	\varphi'(s)= \mathrm{I}+\mathrm{II}= -2\lambda\log\ch\delta + \lambda \myth(\delta)\e \bigl\la m(\sigma^{1,1})^K+m(\sigma^{1,2})^K\bigr\ra_s+\lambda\myth(\delta)^2+O(\lambda\delta^3).
	$$
	One can check that $-2\log\ch\delta + \myth(\delta)^2=O(\delta^4)$ as $\delta\to 0$ and, therefore, $$
	\varphi'(s)=\lambda \delta \e \bigl\la m(\sigma^{1,1})^K+m(\sigma^{1,2})^K\bigr\ra_s+O(\lambda\delta^3).$$ Now, integrating between $0$ and $1$ and dividing both sides by $\delta$,
	$$
	\frac{1}{\delta}\varphi(1)-\frac{1}{\delta} \varphi(0) =\frac{1}{\delta}\int_0^1\phi'(s)ds=\lambda \int_0^1\e \bigl\la m(\sigma^{1,1})^K+m(\sigma^{1,2})^K\bigr\ra_sds+O(\lambda \delta^2).
	$$
	Using the elementary estimates
	$$
	\frac{1}{N}\e\max_{{\sg^1,\sg^2 \in A : |R_{1,2}| \in S}} H(s,\sigma^1,\sigma^2) \leq \varphi(s) \leq 2\log 2 + \frac{1}{N}\e\max_{{\sg^1,\sg^2 \in A : |R_{1,2}| \in S}} H(s,\sigma^1,\sigma^2)
	$$
	at $s=0$ and $s=1$, we infer from the above that
	\begin{align*}
	\Bigl|\frac{1}{N}\e\max_{{\sg^1,\sg^2 \in A : |R_{1,2}| \in S}}\bigl(H_\lambda^1(\sigma^1)+H_\lambda^2(\sigma^2)\bigr) -  \frac{1}{\delta} \varphi(0)\Bigr|& \leq \frac{2\log 2}{\delta},\\
	\Bigl| \frac{\beta}{\delta} \frac{1}{N}\e\max_{{\sg^1,\sg^2 \in A : |R_{1,2}| \in S}}\bigl(H^1(\sigma^1)+H^2(\sigma^2)\bigr) 
	- \frac{1}{\delta} \varphi(1) \Bigr|& \leq \frac{2\log 2}{\delta}.
	\end{align*}
	By Taylor's expansion and our choice of $\beta$,
	$$
	\frac{\beta}{\delta}= \frac{\sqrt{\lambda}\myth(\delta)}{\delta} = \sqrt{\lambda} +O(\sqrt{\lambda}\delta^2)
	$$
	and, therefore,
	\begin{align*}
	&\frac{1}{N}\e\max_{{\sg^1,\sg^2 \in A : |R_{1,2}| \in S}}\bigl(H_\lambda^1(\sigma^1)+H_\lambda^2(\sigma^2)\bigr)\\
	&= 
	\frac{\sqrt{\lambda}}{N}\e\max_{{\sg^1,\sg^2 \in A : |R_{1,2}| \in S}}\bigl(H^1(\sigma^1)+H^2(\sigma^2)\bigr) \\
	&\qquad -\lambda  \int_0^1\e \bigl\la m(\sigma^{1,1})^K+m(\sigma^{1,2})^K\bigr\ra_sds +O\Bigl(\frac{1}{\delta}+ \lambda \delta^2+\sqrt{\lambda}\delta^2\Bigr).
	\end{align*}
	Letting $\delta=\lambda^{-1/3}$ in the last equation completes our proof.
\end{proof}

Using the above interpolation and Theorem \ref{Thm2} (which will be proved in the following section), we are now ready to prove Theorem \ref{prop4}.

\begin{proof}[Proof of Theorem \ref{prop4}]
	Set $S=[a,b]$ as in Theorem \ref{Thm2} and define
	\begin{align*}
	f(t):=\limsup_{N\rightarrow\infty}\frac{1}{N} \, \e\max_{|R_{1,2}| \in S}\bigl(H^1(\sigma^1)+H^2(\sigma^2)\bigr),
	\end{align*}
	where $H^1, H^2$ were defined in (\ref{label:comeback}).
	It is easy to see that $f$ is a continuous function of $t.$ By Theorem \ref{Thm2} above and \cite[Theorem 2]{ChenP17} (used with $\eps=a/2$ there), we have that 
	$$
	f(t)< 2P(K) \,\,\mbox{ for all } t\in [0,1].
	$$ 
	
	[\emph{Remark:} In fact, the case of $K$-spin model was established in Theorem 2 in \cite{ChenHL} and stated for $t\in(0,1)$ in both \cite{ChenHL, ChenP17}. However, the case of $t=0$ is implicitly included in the proof of Theorem 2 in \cite{ChenHL} and is in fact the easiest case. One should simply note that, in the notation of that paper, the derivative of the Guerra-Talagrand upper bound for $t=0$ with respect to the Lagrange multiplier $\lambda$ equals minus the overlap constraint $-q$, which results in energy penalty for non-zero overlap constraints.]
	
	The continuity of $f$ ensures that there exists some $\eta_0>0$ such that $f(t)<2P(K)-6\eta_0$ for all $t\in [0,1].$ Using Lemma \ref{lem3} with $A=\Sigma^N$ and noting that the terms involving magnetization are negative, we see that, for any $t\in [0,1]$ and large enough $N\geq N_0$,
	\begin{align*}
	\frac{1}{N}\e\max_{|R_{1,2}| \in S}\bigl(H_\lambda^1(\sigma^1)+H_\lambda^2(\sigma^2)\bigr)
	&\leq \frac{\sqrt{\lambda}}{N} \, \e\max_{ |R_{1,2}| \in S}\bigl(H^1(\sigma^1)+H^2(\sigma^2)\bigr)+L \lambda^{1/3}\\
	&\leq \sqrt{\lambda}\bigl(2P(K)-6\eta_0\bigr)+L \lambda^{1/3}
	\end{align*}
	and, thus, if $\lambda\geq \lambda_0=(L\eta_0)^{-6}$ for some large enough $L$ then  
	\begin{align*}
	\frac{1}{N}\e\max_{\substack{\sg^1,\sg^2 \in A \,: \\ |R_{1,2}| \in S}}\bigl(H_\lambda^1(\sigma^1)+H_\lambda^2(\sigma^2)\bigr)\leq 2\sqrt{\lambda}\bigl(P(K)-2\eta_0\bigr).
	\end{align*}
	By a standard concentration inequality for Poisson$(\lambda N)$ and Azuma's concentration inequality, one can check that there exists $L=L(\eta_0)$ such that, with probability at least $1-Le^{-N/L}$,
	\begin{align*}
	&\frac{1}{N}\max_{|R_{1,2}| \in S}\bigl(H_\lambda^1(\sigma^1)+H_\lambda^2(\sigma^2)\bigr)\\
	&\leq
	\frac{1}{N}\e\max_{ |R_{1,2}| \in S}\bigl(H_\lambda^1(\sigma^1)+H_\lambda^2(\sigma^2)\bigr) +\eta_0\sqrt{\lambda}\\
	&\leq 2\sqrt{\lambda}\bigl(P(K)-\eta_0\bigr).
	\end{align*}
	On this event, if $\sigma^1,\sigma^2$ satisfy
	\begin{align*}
	\frac{H_\lambda^\ell(\sigma^\ell)}{N}> \sqrt{\lambda}(P(K)-\eta_0), \ell=1,2,
	\end{align*}
	then $|R(\sigma^1,\sigma^2)|\notin S=[a,b],$ which finishes the proof.
\end{proof}

Finally, we use the interpolation to prove Lemma \ref{prop3}, which shows that near maximizers automatically have small magnetization.

\begin{proof}[Proof of Lemma \ref{prop3}]
	Let $c=(4\eta_0)^{1/K}\lambda^{-1/(2K)}$. In the setting of Lemma \ref{lem3}, let $A=\bigl\{\sigma\in\Sigma^N:|m(\sigma)|\geq c\bigr\}$, $S=[0,1]$ and $t=1$. In this case, $H_\lambda^1=H_\lambda^2=H_\lambda$ and $H^1=H^2=H.$ Since
	\begin{align*}
	\lambda \int_0^1\e\bigl\la m(\sigma^1)^K+m(\sigma^2)^K\bigr\ra_sds\geq 2\lambda c^K=8\sqrt{\lambda}\eta_0,
	\end{align*} 
	we have
	\begin{align*}
	\frac{1}{N}\e\max_{\sigma\in A}H_\lambda(\sigma)
	&\leq 
	\frac{\sqrt{\lambda}}{N} \, \e\max_{\sigma\in A} H(\sigma)
	-4\sqrt{\lambda}\eta_0+L\lambda^{1/3}
	\end{align*}
	and, thus, there exists $N_0$ such that, for all $N\geq N_0$,
	\begin{align*}
	\frac{1}{N}\e\max_{\sigma\in A} H_\lambda(\sigma)
	&\leq \frac{\sqrt{\lambda}}{N} \, \e\max_{\sigma\in\Sigma^N} H(\sigma) -4\sqrt{\lambda}\eta_0+O\bigl(\lambda^{1/3}\bigr)\\
	&\leq \sqrt{\lambda}\bigl(P(K)-3\eta_0\bigr)+L \lambda^{1/3}.
	\end{align*}
	Consequently, for $N\geq N_0$ and $\lambda\geq L^6\eta_0^{-6}$, 
	\begin{align*}
	\frac{1}{N}\e\max_{\sigma\in A} H_\lambda(\sigma)
	&\leq \sqrt{\lambda}\bigl(P(K)-2\eta_0\bigr).
	\end{align*}
	From standard concentration inequality for Poisson$(\lambda N)$ and Azuma's concentration inequality, one can check that there exists $L=L(\eta_0)$ such that, with probability at least $1-Le^{-N/L}$,
	\begin{align*}
	\frac{1}{N}\max_{\sigma\in A} H_\lambda(\sigma)
	&\leq 
	\frac{1}{N}\e\max_{\sigma\in A} H_\lambda(\sigma) + \sqrt{\lambda}\eta_0
	\leq
	\sqrt{\lambda}\bigl(P(K)-\eta_0\bigr).
	\end{align*}
	This means that, on this event, any $\sigma$ satisfying
	\begin{align*}
	\frac{H_\lambda(\sigma)}{N}&> \sqrt{\lambda}\bigl(P(K)-\eta_0\bigr)
	\end{align*}
	must also satisfy $|m(\sigma)|<c=\lambda^{-1/(2K)}(4\eta_0)^{1/K}.$
\end{proof}

\section{Overlap gap in the mean field model} \label{sec:Thm2proof}

In this section we prove Theorem \ref{Thm2}. As mentioned in the Introduction, it will be proved by means
of the Guerra-Talagrand (GT) replica symmetry breaking bound at zero temperature, which is obtained
from the positive temperature bound \cite{Guerra, TPF, TalUltra, SG2} by an appropriate rescaling procedure.
We begin by recalling the analogue of the Parisi formula for the limit of the normalized maximum of the
Hamiltonian \eqref{eqn:HSK},
\begin{equation}
\ME_N:=\frac{1}{N}\max_{\sigma \in \{-1,1\}^N}\, H(\sigma),
\end{equation}
which was proved in \cite{ChenAuf}, as well as the zero-temperature Guerra-Talagrand bound for coupled copies.

\subsection{The Parisi formula and Guerra-Talagrand bound}

Let $\mathcal{U}$ be the set of all nonnegative nondecreasing right-continuous functions $\gamma$ on the interval $[0,1)$ such that $\int_0^1\gamma(s)ds<\infty.$ We equip the space $\mathcal{U}$ with the $L^1(dx)$ norm. Let 
\begin{equation}
\xi(s)=s^K
\end{equation}
and define the zero temperature analogue of the Parisi functional \cite{Parisi79, Parisi} on $\mathcal{U}$ by
\begin{align}\label{pf}
\mathcal{P}(\gamma)=\Phi_\gamma(0,0)-\frac{1}{2}\int_0^1s\xi''(s)\gamma(s)ds,
\end{align}
where $\Phi_\gamma(s,x)$ for $(s,x)\in[0,1)\times\mathbb{R}$ is the weak \footnote{Recall from \cite{ChenHL,JagTob} that $u$ is a weak solution to $\eqref{pde}$ if it has an essentially bounded weak derivative $\partial_xu$ and for every $\phi\in C_c^\infty((0,1]\times \mathbb{R})$, $\int_0^1\int_\mathbb{R}\Bigl(-u \partial_t\phi+\frac{\xi''(t)}{2}\bigl(u\partial_{xx}\phi+\gamma(s)(\partial_xu)^2\bigr)\Bigr)dxds+\int_\mathbb{R}\phi(1,x)|x|dx=0.$}
solution of the Parisi PDE
\begin{align}
\label{pde}
\partial_s\Phi_\gamma(s,x)&=-\frac{\xi''(s)}{2}\bigl(\partial_{xx}\Phi_{\gamma}(s,x)+\gamma(s)\bigl(\partial_x\Phi_\gamma(s,x)\bigr)^2\bigr)
\end{align}
with the boundary condition $\Phi_\gamma(1,x)=|x|.$ The existence, uniqueness, and regularity properties of the solution $\Phi_\gamma(s,x)$ were studied in \cite[Appendix]{ChenHL}. The Parisi formula for the maximum energy in \cite{ChenAuf} states
\begin{align}
\label{Parisi}
\ME&:=\lim_{N\rightarrow\infty}\ME_N=\inf_{\gamma\in \mathcal{U}}\mathcal{P}(\gamma),
\end{align}
where the limit of $\ME_N$ exists almost surely. Indeed, \eqref{Parisi} was established in \cite{ChenAuf} for general mixed $p$-spin models by showing how the corresponding formulas at positive temperature, first proved for mixed even $p$-spin models in \cite{TPF} and for general mixed $p$-spin models in \cite{PUltra, PPF}, are transformed in the zero-temperature limit. The minimizer on the right-hand side exists and is unique, as was shown in \cite{ChenHL} building upon the ideas from \cite{ChenAufP}. We will denote this minimizer by $\gamma_P$ and call $\gamma_P(ds)$ the Parisi measure. 

We will now describe the Guerra-Talagrand bound for two coupled systems, which is discussed in Section 3.1 of \cite{ChenHL}. For any $(\lambda,\bx)\in \mathbb{R}\times\mathbb{R}^2,$ set
\begin{align}
\begin{split}\label{gf}
g(\lambda,\bx)&=\max\bigl(x_1+x_2+\lambda,-x_1-x_2+\lambda,x_1-x_2-\lambda,-x_1+x_2-\lambda\bigr).
\end{split}
\end{align}
Fix $\gamma \in \mathcal{U}$, $q\in[0,1)$ and $\lambda\in\Reals$. First, we define $\Gamma_\gamma(\lambda,s,\bx)$ for $(s,\bx)\in [q,1]\times\mathbb{R}^2$ to be the weak solution of the following equation,
\begin{align}
\label{pde2}
\partial_s\Gamma_{\gamma}&=-\frac{\xi''(s)}{2}\bigl(\Delta \Gamma_\gamma+\gamma(s)\bigl\la\nabla \Gamma_{\gamma},\nabla \Gamma_{\gamma}\bigr\ra\bigr),
\end{align}
with the boundary condition at $s=1$ given by
\begin{align}\label{eq1}
\begin{split}
\Gamma_{\gamma}(\lambda,1,\bx)&=g(\lambda,\bx).
\end{split}
\end{align}
Here, $\Delta\Gamma_\gamma$ is the Laplacian and $\nabla \Gamma_\gamma$ is the gradient of $\Gamma_\gamma$ in $\bx$. The existence and uniqueness of the weak solution to this PDE can be established in a similar way as in the one-dimensional case \eqref{pde} without essential changes, see \cite{ChenHL} and also \cite[Appendix 1]{BAJ}. Next, using $\Gamma_\gamma(q,x,x)$ as a boundary condition at $s=q$, we define $\Psi_\gamma(\lambda,s,x)$ for $(s,x)\in [0,q]\times\mathbb{R}$ to be the solution of
\begin{align}\label{pde3}
\partial_s\Psi_\gamma&=-\frac{\xi''(s)}{2}\bigl(\partial_{xx}\Psi_\gamma+\gamma(s)\bigl(\partial_x\Psi_\gamma\bigr)^2\bigr)
\end{align}
with the boundary condition $\Psi_\gamma(\lambda, q,x)=\Gamma_\gamma(\lambda,q,x,x).$ The existence and regularity properties of $\Gamma_\gamma$ and $\Psi_\gamma$ were studied in \cite{ChenHL} and will also be discussed in the proof of Proposition \ref{lem2} below. Finally, for all $(\lambda,\gamma)\in\mathbb{R}\times\mathcal{U}$, we define
\begin{align}\label{GTfunc}
\mathcal{T}_q(\lambda,\gamma)=\Psi_\gamma(\lambda,0,0)-\lambda q-\Bigl(\int_0^1s\xi''(s)\gamma(s)ds+\int_0^{q}s\xi''(s)\gamma(s)ds\Bigr).
\end{align}
If, for a subset $A\subset[-1,1]$ we define the maximum coupled energy as
\begin{align}
\MCE_N(A)&=\frac{1}{N}\max_{R_{1,2}\in A}\bigl(H(\sigma^1)+H(\sigma^2)\bigr)
\end{align}
and let $S_N=\{R_{1,2}\,:\,\sigma^1,\sigma^2\in \{-1,+1\}^N\}$ be the set of feasible overlap values for a given $N$ then, for any $q\in S_N\cap[0,1],$
\begin{align}\label{RSB}
\e\MCE_N(\{q\})&\leq \mathcal{T}_q(\lambda,\gamma).
\end{align}
This zero temperature Guerra-Talagrand bound follows from the classical positive temperature bound (see \cite{TPF} or \cite[Section 14.6]{SG2}) by the same rescaling argument as in the setting of one system explained in Lemma 2 in \cite{ChenAuf}. By symmetry of the model for even $K$, the same bound holds for $\e\MCE_N(\{-q\})$ as well. Our goal now will be to show that the right hand side can be made strictly less than twice $\mathcal{P}(\gamma_P)$ defined in (\ref{Parisi}) for all $q$ in some non-trivial interval $(a,b)$, and this will be done by an appropriate choice of $\gamma$ in (\ref{RSB}) and slightly perturbing $\lambda$ around zero. The following subsection prepares needed auxiliary results.

\subsection{Variational representations for $\Phi_\gamma$ and $\Psi_\gamma$}

Observe that both $\Phi_\gamma$ and $\Psi_\gamma$ are special cases of the Hamilton-Jacobi-Bellman equation, induced by a linear problem of diffusion control. In these cases, it is well-known that they can be expressed as optimal stochastic control problems by means of dynamic programming \cite{FS}. More precisely, denote by $W=\{W(w),\mathcal{G}_w,0\leq w\leq 1\}$ a standard Brownian motion, where the filtration $(\mathcal{G}_w)_{0\leq w\leq 1}$ satisfies the usual conditions (see Definition~2.25 in Chapter 1 of \cite{KS}). For $0\leq q\leq 1,$ denote by $D[q]$ the space of all progressively measurable processes $u$ with respect to $(\mathcal{G}_w)_{0\leq w\leq q}$ satisfying $\sup_{0\leq w\leq q}|u(w)|\leq 2$. Endow the space $D[q]$ with the norm
\begin{align}\label{norm}
\|u\|_{q}&=\Bigl(\e\int_0^q\! u(w)^2\, dw\Bigr)^{1/2}.
\end{align}
The following variational representation holds for $\Phi_\gamma$. 

\begin{proposition}\label{cor1} Let $\gamma\in \mathcal{U}$. For any $0\leq s\leq q\leq 1$ and $x\in\Reals,$
	\begin{align}
	\begin{split}\label{cor1:eq1}
	\Phi_\gamma(s,x)&=\max_{u\in D[q]} \e\Bigl[\Phi_\gamma\Bigl(q,x+\int_s^q\xi''\gamma udw+\int_s^q\sqrt{\xi''}dW\Bigr)-\frac{1}{2}\int_s^q\xi''\gamma u^2dw\Bigr].
	\end{split}
	\end{align}
	The maximum is attained on $u_\gamma(w)=\partial_x\Phi_\gamma(w,X_\gamma(w)),$ where $X_\gamma=(X_\gamma(w))_{s\leq w\leq q}$ is the solution to the following SDE,
	\begin{align}
	\label{cor1:eq2}
	dX_\gamma&=\xi''(w)\gamma(w) \partial_x\Phi_\gamma(w,X_\gamma)dw+\sqrt{\xi''(w)}dW,
	\end{align} 
	with the initial condition $X_\gamma(s)=x$.
\end{proposition}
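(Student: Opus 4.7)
The plan is to establish the variational formula by applying It\^o's formula to $\Phi_\gamma(w, X^u(w))$ for an arbitrary admissible control $u\in D[q]$, substituting the Parisi PDE \eqref{pde} into the resulting drift, and then completing the square in $u$. Concretely, set $X^u(w) = x + \int_s^w \xi''(r)\gamma(r)u(r)\,dr + \int_s^w \sqrt{\xi''(r)}\, dW(r)$, so that It\^o yields
\begin{align*}
d\Phi_\gamma(w, X^u(w)) = \Bigl(\partial_w \Phi_\gamma + \xi''\gamma u\,\partial_x\Phi_\gamma + \tfrac{\xi''}{2}\partial_{xx}\Phi_\gamma\Bigr)dw + \partial_x\Phi_\gamma\sqrt{\xi''}\, dW.
\end{align*}
The first and third drift terms cancel by \eqref{pde}, leaving the drift equal to $\xi''\gamma\bigl(u\,\partial_x\Phi_\gamma - \tfrac{1}{2}(\partial_x\Phi_\gamma)^2\bigr)$. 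Integrating from $s$ to $q$, taking expectations (the stochastic integral has zero mean since $\partial_x\Phi_\gamma$ is uniformly bounded by $1$), and subtracting $\tfrac{1}{2}\,\e\int_s^q \xi''\gamma u^2\,dw$ from both sides produces, after completing the square,
\begin{align*}
\e\Bigl[\Phi_\gamma(q,X^u(q)) - \tfrac{1}{2}\!\int_s^q \!\xi''\gamma u^2 \,dw\Bigr] = \Phi_\gamma(s,x) - \tfrac{1}{2}\,\e\!\int_s^q\!\xi''\gamma(u - \partial_x\Phi_\gamma)^2 dw.
\end{align*}
This is bounded above by $\Phi_\gamma(s,x)$, with equality precisely when $u(w)=\partial_x\Phi_\gamma(w,X_\gamma(w))$ and $X_\gamma$ solves the SDE \eqref{cor1:eq2}, which simultaneously proves \eqref{cor1:eq1} and identifies the maximizer.

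The main obstacle is that $\Phi_\gamma$ is only a weak solution to \eqref{pde} with essentially bounded weak derivative, so classical It\^o calculus does not apply to it directly. The standard remedy, developed in the appendix of \cite{ChenHL} for this exact PDE, is to approximate $\gamma\in\mathcal{U}$ in $L^1(ds)$ by a sequence of step functions $\gamma_n$. For each such $\gamma_n$, the Parisi PDE can be solved piecewise by Cole-Hopf-type transformations, producing a classical $C^{1,2}$ solution $\Phi_{\gamma_n}$ whose spatial derivative is uniformly bounded by $1$ (this Lipschitz bound propagates from the boundary condition $|x|$ via the maximum principle). The argument above then goes through verbatim for each $\gamma_n$, and the identity is transferred to the limit using the continuity estimates $\Phi_{\gamma_n}\to\Phi_\gamma$ and $\partial_x\Phi_{\gamma_n}\to\partial_x\Phi_\gamma$ established in \cite{ChenHL}.

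It remains to verify admissibility and existence of the optimizer. The bound $|\partial_x\Phi_\gamma|\le 1 < 2$ ensures $u_\gamma\in D[q]$ once progressive measurability is established, which follows from the strong measurability of the SDE solution $X_\gamma$. Existence of a (weak) solution to \eqref{cor1:eq2} can be obtained by Girsanov's theorem applied to the bounded drift $\xi''\gamma\,\partial_x\Phi_\gamma$, or alternatively by passing to the limit in the Lipschitz-drift SDEs associated to the step function approximants $\gamma_n$. I expect the technical heart of the proof to lie in this approximation/limit passage, since reconciling the theory of weak solutions to the Parisi PDE with stochastic calculus requires care; however, all the requisite ingredients are already present in the literature cited in the excerpt.
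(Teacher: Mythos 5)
Your verification argument (It\^o's formula applied to $\Phi_\gamma(w,X^u(w))$, cancellation via the Parisi PDE, completing the square, and handling the weak-solution issue by approximating $\gamma$ with step functions) is correct and is exactly the approach the paper invokes: it gives no detailed proof but states that the proposition "can be verified by a direct application of It\^o's formula" and defers to \cite[Theorem 5]{C15}, which carries out precisely this computation. Your additional remarks on admissibility of the optimal feedback control and existence of the SDE solution are sound and fill in the "minor modifications" the paper alludes to.
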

Similarly, the functional $\Psi_\gamma(\lambda,0,0)$ in \eqref{GTfunc} can be written via the following optimal stochastic control problem. Recall that $\Psi_\gamma(\lambda, q,x)=\Gamma_\gamma(\lambda,q,x,x).$

\begin{proposition}\label{prop2} Let $(\lambda,\gamma)\in\mathbb{R}\times\mathcal{U}.$ For any $0\leq s\leq q\leq 1$ and $x\in\Reals,$
	\begin{align}
\begin{split}	\label{prop2:eq2}
	\Psi_\gamma(\lambda,s,x)&=\max_{u\in D[q]}\e\Bigl[\Psi_\gamma\Bigl(\lambda,q,x+\int_s^q\xi''\gamma udw+\int_s^q\sqrt{\xi''}d{W}\Bigr)\\
	&\qquad\qquad\qquad\qquad-\frac{1}{2}\int_s^q\xi''\gamma u^2dw\Bigr].
	\end{split}
	\end{align}
	The maximum is attained on $u_{\gamma}^\lambda(w)=\partial_x \Psi_\gamma(\lambda, w,X_{\gamma}^\lambda(w))$, where $(X_{\gamma}^\lambda(w))_{s\leq w\leq q}$ is the solution of the following SDE,
	\begin{align}
	%\begin{split}
	\label{cor1:eq22}
	dX_{\gamma}^\lambda&=\xi''(w)\gamma(w)\partial_x\Psi_\gamma(\lambda, w,X_{\gamma}^\lambda)dw+\sqrt{\xi''(w)}d{W},
	%\end{split}
	\end{align}
	with the initial condition $X_{\gamma}^\lambda(s)=x.$
\end{proposition}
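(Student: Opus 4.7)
The plan is to mimic the standard dynamic-programming derivation for Hamilton-Jacobi-Bellman equations, since (\ref{pde3}) is exactly the HJB equation associated with the control problem on the right-hand side of (\ref{prop2:eq2}). This is precisely the argument already used for Proposition~\ref{cor1}, and the proof of Proposition~\ref{prop2} should run in parallel because (\ref{pde3}) has the same form as (\ref{pde}), only with the $\lambda$-dependent boundary value $\Gamma_\gamma(\lambda,q,x,x)$ instead of $|x|$.

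First I would restrict attention to $\gamma$ that is a step function with finitely many jumps on $[0,1)$, the class for which \cite{ChenHL} establishes that $\Psi_\gamma$ is classically smooth on each strip of constancy, with $|\partial_x \Psi_\gamma|\le 1$ (inherited from the $1$-Lipschitz boundary datum $g(\lambda,\cdot,\cdot)$, recalling $|\partial_{x_i} g|\le 1$). For any admissible $u\in D[q]$, let $X^u$ be the controlled diffusion starting at $X^u(s)=x$ with
\begin{equation*}
dX^u = \xi''(w)\gamma(w)u(w)\,dw + \sqrt{\xi''(w)}\,dW.
\end{equation*}
Applying It\^o's formula to $w\mapsto \Psi_\gamma(\lambda,w,X^u(w))$ and using (\ref{pde3}) to cancel $\partial_s\Psi_\gamma + \tfrac{\xi''}{2}\partial_{xx}\Psi_\gamma$ against $-\tfrac{\xi''\gamma}{2}(\partial_x\Psi_\gamma)^2$, the drift collapses to $\xi''(w)\gamma(w)\bigl(u\,\partial_x\Psi_\gamma - \tfrac12(\partial_x\Psi_\gamma)^2\bigr)$. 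Since $\partial_x\Psi_\gamma$ is bounded, the martingale part has zero expectation, so
\begin{equation*}
\e\,\Psi_\gamma(\lambda,q,X^u(q)) - \Psi_\gamma(\lambda,s,x) = \e\!\int_s^q \xi''\gamma\Bigl(u\,\partial_x\Psi_\gamma - \tfrac12(\partial_x\Psi_\gamma)^2\Bigr)dw.
\end{equation*}
The elementary inequality $vu-\tfrac12 v^2\le \tfrac12 u^2$ with $v=\partial_x\Psi_\gamma$, multiplied by $\xi''\gamma\ge 0$, yields the upper bound stated in (\ref{prop2:eq2}); equality holds pointwise iff $u=\partial_x\Psi_\gamma$. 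Choosing $u_\gamma^\lambda(w)=\partial_x\Psi_\gamma(\lambda,w,X_\gamma^\lambda(w))$ makes $X^u$ coincide with the strong solution $X_\gamma^\lambda$ of (\ref{cor1:eq22}) (well-posedness follows from Lipschitz continuity of $\partial_x\Psi_\gamma$ in $x$, again from \cite{ChenHL}), and this choice lies in $D[q]$ since $|\partial_x\Psi_\gamma|\le 1\le 2$.

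To pass from step-function $\gamma$ to arbitrary $\gamma\in\mathcal{U}$, I would choose step-function approximants $\gamma_n\to \gamma$ in $L^1[0,1)$ and invoke the continuity estimates from \cite{ChenHL}: $\Psi_{\gamma_n}(\lambda,s,x)\to \Psi_\gamma(\lambda,s,x)$ and $\partial_x\Psi_{\gamma_n}\to \partial_x\Psi_\gamma$ locally uniformly in $x$. Both sides of (\ref{prop2:eq2}) are stable under this limit: the right-hand supremum converges by uniform boundedness of $u$ together with dominated convergence applied to the It\^o integrals, and the SDE (\ref{cor1:eq22}) converges in distribution (or in $L^2$) via a Gronwall-type estimate.

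The main obstacle, as usual in this subject, is not the formal derivation but the low regularity of weak solutions to (\ref{pde3}) for general $\gamma$: Itô's formula cannot be applied directly, and one has to verify that the approximation scheme above preserves both the value and the maximizer. The work required is essentially identical to that carried out in \cite{ChenHL} for $\Phi_\gamma$ (i.e.\ Proposition~\ref{cor1}), and it transfers to $\Psi_\gamma$ because the only new ingredient, the $\lambda$-dependent boundary datum $\Gamma_\gamma(\lambda,q,x,x)$, inherits $1$-Lipschitz regularity in $x$ from $g(\lambda,\cdot,\cdot)$ via the analogous stochastic-control representation for the two-dimensional PDE (\ref{pde2}).
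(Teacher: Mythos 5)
Your proposal is correct and follows essentially the same route the paper takes: the paper itself gives no detailed argument, noting only that both propositions are instances of the HJB/stochastic-control representation and can be verified by a direct application of It\^{o}'s formula as in \cite[Theorem 5]{C15}, with the step-function-plus-approximation scheme you describe. One small correction: the bound inherited from the boundary datum is $|\partial_x\Psi_\gamma|\leq 2$, not $1$, since $\Psi_\gamma(\lambda,q,x)=\Gamma_\gamma(\lambda,q,x,x)$ is the diagonal restriction of a function that is $1$-Lipschitz in each of $x_1,x_2$; this still places $u_\gamma^\lambda$ in $D[q]$, so nothing breaks.
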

Alternatively, both Propositions \ref{cor1} and \ref{prop2} can also be verified by a direct application of It\^{o}'s formula. For a detailed proof, we refer the reader to \cite[Theorem 5]{C15}. Although the argument therein is for a different boundary condition and $\gamma(1-)$ is bounded, the same argument carries through with only minor modifications. Our next result gives a representation of the derivative of $\Psi_\gamma(\lambda,0,0)$ in $\lambda$ when $\lambda=0.$ 
\begin{proposition}
	\label{lem2}
	For any $0\leq q<1$ and any $\gamma\in\mathcal{U},$ the function $\lambda\mapsto\Psi_\gamma(\lambda,0,0)$ is continuously differentiable and its derivative at zero can be written as
	\begin{align}
	\label{lem2:eq1}
	\partial_\lambda\Psi_\gamma(0,0,0)=\e \partial_{x}\Phi_{\gamma}\bigl(q,X_{\gamma}^0(q)\bigr)^2,
	\end{align}
	where $X_{\gamma}^0$ is defined in \eqref{cor1:eq22} with $\lambda=0$, $s=0$ and $x=0.$
\end{proposition}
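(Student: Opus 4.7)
The strategy is to differentiate the variational representation of Proposition~\ref{prop2} for $\Psi_\gamma$ in $\lambda$ and to exploit the following simplification at $\lambda=0$: since $g(0,x_1,x_2)=|x_1|+|x_2|$ is a sum and the PDE~\eqref{pde2} has no mixed second derivatives, one checks directly that
\begin{equation*}
\Gamma_\gamma(0,s,\bx)=\Phi_\gamma(s,x_1)+\Phi_\gamma(s,x_2),\quad s\in[q,1].
\end{equation*}
Thus the two-dimensional optimal control problem for $\Gamma_\gamma(\lambda,q,\bx)$ at $\lambda=0$ decouples into two independent copies of the one-dimensional Parisi diffusion \eqref{cor1:eq2}.

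\emph{Double envelope argument.} First, Proposition~\ref{prop2} at $s=0$, $x=0$ together with convexity of $\lambda\mapsto\Psi_\gamma(\lambda,q,x)$ (inherited from the fact that $g$ is a supremum of affine functions of $\lambda$) and monotone convergence of difference quotients of convex functions gives
\begin{equation*}
\partial_\lambda\Psi_\gamma(0,0,0)=\e\bigl[\partial_\lambda\Psi_\gamma(0,q,X_\gamma^0(q))\bigr]=\e\bigl[\partial_\lambda\Gamma_\gamma(0,q,X_\gamma^0(q),X_\gamma^0(q))\bigr].
\end{equation*}
Next, a two-dimensional analogue of Proposition~\ref{prop2} (proof identical) represents $\Gamma_\gamma(\lambda,q,\bx)$ as a maximum over $\mathbb{R}^2$-valued controls $\mathbf{u}$ with terminal payoff $g(\lambda,\mathbf{Y})$, where $\mathbf{Y}=\bx+\int_q^1\xi''\gamma\mathbf{u}\,dw+\int_q^1\sqrt{\xi''}\,d\mathbf{W}$. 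The separation at $\lambda=0$ forces the unique optimizer to be $(\partial_x\Phi_\gamma(\cdot,Y_1),\partial_x\Phi_\gamma(\cdot,Y_2))$ with $Y_1,Y_2$ \emph{independent} Parisi diffusions on $[q,1]$ started at $x_1,x_2$. Since $g(\lambda,y_1,y_2)=\max_{\epsilon_1,\epsilon_2\in\{\pm 1\}}(\epsilon_1 y_1+\epsilon_2 y_2+\epsilon_1\epsilon_2\lambda)$ is piecewise linear in $\lambda$ with $\partial_\lambda g(0,y_1,y_2)=\mathrm{sgn}(y_1)\mathrm{sgn}(y_2)$ on $\{y_1 y_2\neq 0\}$, and since $Y_1,Y_2$ possess densities ($q<1$ guarantees $\int_q^1\xi''\,dw>0$), another envelope argument yields
\begin{equation*}
\partial_\lambda\Gamma_\gamma(0,q,\bx)=\e[\mathrm{sgn}(Y_1)\mathrm{sgn}(Y_2)]=\e\,\mathrm{sgn}(Y_1)\cdot\e\,\mathrm{sgn}(Y_2),
\end{equation*}
by independence.

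\emph{Martingale identity and conclusion.} Differentiating the Parisi PDE~\eqref{pde} in $x$ shows that $v:=\partial_x\Phi_\gamma$ satisfies $\partial_s v+\tfrac{\xi''}{2}\partial_{xx}v+\xi''\gamma v\,\partial_x v=0$. Applying It\^o's formula to $v(w,X_\gamma^x(w))$ along the Parisi SDE~\eqref{cor1:eq2} cancels the drift, so $v(w,X_\gamma^x(w))$ is a bounded martingale on $[q,1)$. A standard approximation of the boundary $|x|$ by smooth convex functions, combined with the existence of a density for $X_\gamma^x(1)$, gives $\partial_x\Phi_\gamma(q,x)=\e\,\mathrm{sgn}(X_\gamma^x(1))$. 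Setting $\bx=(x,x)$ in the previous display therefore yields $\partial_\lambda\Gamma_\gamma(0,q,x,x)=(\partial_x\Phi_\gamma(q,x))^2$, and substituting into the first envelope identity delivers the stated formula. Continuous differentiability of $\lambda\mapsto\Psi_\gamma(\lambda,0,0)$ follows because it is convex (a supremum of convex functions of $\lambda$) and differentiable on all of $\mathbb{R}$ by the same envelope argument (valid at every $\lambda$ via non-degeneracy of the terminal law), hence automatically $C^1$.

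The main obstacles are (i) the rigor of the two envelope steps, which require using strict concavity of the quadratic running cost $-\tfrac12\xi''\gamma|\mathbf{u}|^2$ for uniqueness of optimizers together with convexity in $\lambda$ to force the one-sided derivatives to coincide; and (ii) the approximation that identifies $\partial_x\Phi_\gamma(q,x)$ with $\e\,\mathrm{sgn}(X_\gamma^x(1))$ despite the non-smooth boundary $|x|$ of the Parisi PDE.
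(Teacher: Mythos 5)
Your overall architecture matches the paper's: differentiate the variational representation of Proposition \ref{prop2} in $\lambda$ via an envelope argument, and use the decoupling $\Gamma_\gamma(0,s,\bx)=\Phi_\gamma(s,x_1)+\Phi_\gamma(s,x_2)$ at $\lambda=0$ to identify $\partial_\lambda\Gamma_\gamma(0,q,x,x)=\bigl(\partial_x\Phi_\gamma(q,x)\bigr)^2$. Your probabilistic derivation of that last identity (independent Parisi diffusions on $[q,1]$, the computation $\partial_\lambda g(0,y)=\mathrm{sgn}(y_1)\mathrm{sgn}(y_2)$, and the martingale representation $\partial_x\Phi_\gamma(q,x)=\e\,\mathrm{sgn}(X_\gamma^x(1))$) is a reasonable self-contained substitute for the paper's citation of Lemma \ref{lem1} (Lemma 6 of \cite{ChenHL}) and is correct in substance.

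The genuine gap is in the first envelope step, which you flag as an ``obstacle'' but do not close. The identity $\partial_\lambda\Psi_\gamma(0,0,0)=\e\bigl[\partial_\lambda\Psi_\gamma(0,q,X_\gamma^0(q))\bigr]$ does not follow from convexity in $\lambda$ together with uniqueness of the optimal control. What convexity gives you is that $\e\bigl[\partial_\lambda\Psi_\gamma(0,q,X_\gamma^0(q))\bigr]$, being the derivative at $0$ of the convex minorant $F_\gamma^q(\cdot,u_\gamma^0)$ that touches $\Psi_\gamma(\cdot,0,0)$ at $0$, lies in the subdifferential $[\partial_\lambda^-\Psi_\gamma(0,0,0),\partial_\lambda^+\Psi_\gamma(0,0,0)]$; it does not force the two one-sided derivatives to coincide. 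Danskin-type theorems that convert uniqueness of the maximizer into differentiability require compactness of the index set plus joint continuity, and $(D[q],\|\cdot\|_q)$ is not compact; over a non-compact index set the subdifferential of a supremum of convex functions can strictly contain the derivative at the (even unique) maximizer, because near-optimal controls with different $\lambda$-sensitivities can contribute. The paper's way around this is to verify the hypothesis of Lemma \ref{lem-1}, namely that $\lambda\mapsto u_\gamma^\lambda$ is continuous in $\|\cdot\|_q$; this is the bulk of the actual proof --- the uniform Lipschitz bound \eqref{add:eq1} on $\partial_x\Psi_\gamma(\lambda,s,x)$ in $(\lambda,x)$, proved first for atomic $\gamma$ via Cole--Hopf and Lemma \ref{pde:lem} and then extended by approximation, followed by Gronwall's inequality for the SDE \eqref{cor1:eq22}. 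Without this (or an equivalent) input, both your formula for the derivative at $0$ and your claim of differentiability at every $\lambda$ (hence $C^1$ by convexity) remain unproved; the same issue affects your second envelope step for $\partial_\lambda\Gamma_\gamma(0,q,\bx)$, though there one can instead differentiate the explicit Cole--Hopf representation directly, as is done in \cite{ChenHL}.
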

The rest of this subsection is devoted to verifying this proposition. Note that a direct computation gives $\partial_\lambda\Gamma_\gamma(0,1,\bx)= \Phi_\gamma(1,x_1)\cdot \partial_{x_2}\Phi_\gamma(1,x_2)$ for all $x_1,x_2\neq 0.$ From this, the validity of Proposition \ref{lem2} follows immediately from the Cole-Hopf transformation so long as $\gamma\in \mathcal{U}$ is a step function with finitely many jumps. To handle the general situation, we need the following three key lemmas, which relate the functionals in the Guerra-Talagrand bound and the Parisi formula as well as describe their regularity properties. They are taken directly from \cite{C15} and \cite{ChenHL}. The first one is in Lemma 6 in \cite{ChenHL}.
\begin{lemma}
	\label{lem1}
	Fix $0\leq q<1$ and $\gamma\in\mathcal{U}$. For any $w\in [q,1)$ and $\bx = (x_1,x_2) \in\mathbb{R}^2$,
	\begin{align}
	\begin{split}\label{sec4.3:lem1:eq1}
	\Gamma_\gamma(0,w,\bx)&=\Phi_{\gamma}(w,x_1)+\Phi_{\gamma}(w,x_2)
	\end{split}
	\end{align}
	and $\Gamma_\gamma(\lambda,w,\bx)$ is differentiable in $\lambda$ in the classical sense with
	\begin{align}
	\begin{split}\label{sec4.3:lem1:eq2}
	\partial_\lambda \Gamma_\gamma(0,w,\bx)&=\partial_x\Phi_{\gamma}(w,x_1)\partial_x\Phi_{\gamma}(w,x_2).
	\end{split}
	\end{align}
\end{lemma}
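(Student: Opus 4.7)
The plan is to establish the two claims as consequences of uniqueness for the Parisi-type PDEs \eqref{pde} and \eqref{pde2}, combined with an approximation of $\gamma$ by step functions where the Cole--Hopf transformation makes the relevant quantities explicit.

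For the first identity \eqref{sec4.3:lem1:eq1}, I would simply check that the candidate $U(s,\bx) := \Phi_\gamma(s,x_1) + \Phi_\gamma(s,x_2)$ is a weak solution of the coupled PDE \eqref{pde2} at $\lambda = 0$. The boundary condition matches because $g(0,\bx) = |x_1|+|x_2| = \Phi_\gamma(1,x_1) + \Phi_\gamma(1,x_2)$, and the PDE itself follows by adding the two one-dimensional Parisi equations \eqref{pde} for $\Phi_\gamma(s,x_i)$: the mixed partials $\partial_{x_1 x_2} U$ vanish identically, so $\Delta U = \partial_{x_1 x_1}\Phi_\gamma(s,x_1) + \partial_{x_2 x_2}\Phi_\gamma(s,x_2)$, and likewise $\langle \nabla U, \nabla U\rangle$ splits as a sum of $(\partial_x\Phi_\gamma(s,x_i))^2$. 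Since the weak solution to \eqref{pde2} is unique (this is part of the framework developed in \cite{ChenHL}, analogous to the one-dimensional situation in \cite{JagTob}), the identity follows.

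For the differentiability in $\lambda$ and the derivative formula \eqref{sec4.3:lem1:eq2}, the plan is first to establish uniform Lipschitz continuity of $\lambda \mapsto \Gamma_\gamma(\lambda,w,\bx)$ directly from the comparison principle for \eqref{pde2}: since $|g(\lambda,\bx)-g(\lambda',\bx)| \leq |\lambda-\lambda'|$, the maximum principle yields the same Lipschitz bound at every $w \in [q,1)$. This, together with the explicit step-function analysis below, will promote the candidate derivative to a classical derivative. To identify the derivative, I would first treat the case when $\gamma$ is a step function with finitely many jumps, where Cole--Hopf gives a closed-form iterated log-expectation: $\Gamma_\gamma(\lambda,w,\bx)$ equals a finite composition of functionals of the type $\frac{1}{m}\log\mathbb{E}\exp\bigl(m\, g(\lambda, x_1+\zeta_1, x_2+\zeta_2)\bigr)$ with independent Gaussian increments $\zeta_1,\zeta_2$. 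At $\lambda=0$ the integrand factors because $g(0,\bx) = |x_1|+|x_2|$, which re-derives \eqref{sec4.3:lem1:eq1}; differentiating in $\lambda$ at $\lambda = 0$ brings down $\partial_\lambda g(0,\bx) = \mathrm{sign}(x_1)\mathrm{sign}(x_2)$ inside each Gibbs-type average, and the resulting expectation again factors into the product of the Cole--Hopf expressions for $\partial_x\Phi_\gamma(w,x_i)$, $i=1,2$.

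For general $\gamma\in\mathcal{U}$, approximate it in $L^1$ by step functions $\gamma_n$ and use the stability estimates for the Parisi and coupled Parisi PDEs from \cite{C15,ChenHL}: these give locally uniform convergence of $\Gamma_{\gamma_n}\to\Gamma_\gamma$, $\partial_x\Phi_{\gamma_n}\to\partial_x\Phi_\gamma$ (at points where $\Phi_\gamma$ is smooth, in particular for generic $\bx$), together with a Lipschitz bound in $\lambda$ uniform in $n$. This allows interchanging the limit $n\to\infty$ with the $\lambda$-difference quotient and upgrades the step-function identity to the general case. Equivalently, once differentiability is known, one can linearize \eqref{pde2} in $\lambda$ around $\lambda=0$, observe that both $v_1 := \partial_\lambda\Gamma_\gamma(0,\cdot,\cdot)$ and $v_2 := \partial_x\Phi_\gamma(\cdot,x_1)\,\partial_x\Phi_\gamma(\cdot,x_2)$ satisfy the same linear equation with identical boundary data at $s=1$ (using \eqref{sec4.3:lem1:eq1} to identify $\nabla\Gamma_\gamma(0,s,\bx)$ as the coefficient in the drift), and conclude by uniqueness for the linearized problem.

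The main obstacle is the last step: justifying that $\lambda\mapsto\Gamma_\gamma$ is genuinely classically differentiable at $\lambda=0$ (not merely a.e.) and that the resulting derivative is continuous in $\bx$, so that the linearized PDE it satisfies has a well-posed uniqueness theory. The step-function approximation combined with the Lipschitz bound in $\lambda$ and the regularity estimates for $\Phi_\gamma$ from \cite[Appendix]{ChenHL} handle this, but they are the technically delicate ingredient of the argument.
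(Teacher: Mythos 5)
The paper offers no proof of this lemma to compare against: it is imported verbatim as Lemma~6 of \cite{ChenHL}. Your sketch nevertheless reconstructs what is essentially the standard argument from that reference, and its core steps are sound: $\Phi_\gamma(s,x_1)+\Phi_\gamma(s,x_2)$ does solve \eqref{pde2} at $\lambda=0$ because $g(0,\bx)=\max(|x_1+x_2|,|x_1-x_2|)=|x_1|+|x_2|$ and both $\Delta$ and $\la\nabla\cdot,\nabla\cdot\ra$ decouple for a sum of functions of separate variables, so \eqref{sec4.3:lem1:eq1} follows from uniqueness; and the identity $\partial_\lambda g(0,\bx)=\mathrm{sign}(x_1)\mathrm{sign}(x_2)=\partial_x\Phi_\gamma(1,x_1)\,\partial_x\Phi_\gamma(1,x_2)$ for $x_1,x_2\neq 0$, which drives the factorization of the Cole--Hopf/Gibbs average at $\lambda=0$, is exactly the ``direct computation'' the paper alludes to just before stating the lemma.

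The one thin spot is the passage from step functions to general $\gamma\in\mathcal{U}$. A Lipschitz bound in $\lambda$ for $\Gamma_{\gamma_n}$ uniform in $n$, together with pointwise convergence $\Gamma_{\gamma_n}\to\Gamma_\gamma$, does not by itself justify interchanging $n\to\infty$ with the $\lambda$-difference quotient: uniformly Lipschitz functions can converge while their derivatives fail to. What is actually needed --- and what the paper itself deploys when it proves the closely related Proposition~\ref{lem2} --- is a bound on second derivatives in $(\lambda,\bx)$ uniform in $n$, of the kind supplied by Lemma~\ref{pde:lem}, which gives equicontinuity of $\partial_\lambda\Gamma_{\gamma_n}$ and hence, via Arzel\`a--Ascoli and a diagonal argument, locally uniform convergence of the $\lambda$-derivatives to $\partial_\lambda\Gamma_\gamma$. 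You flag this as the delicate ingredient, which is fair, but the mechanism should be named explicitly rather than attributed generically to ``stability estimates.'' Your alternative route via the linearized equation is a correct consistency check --- $\partial_x\Phi_\gamma(s,x_1)\,\partial_x\Phi_\gamma(s,x_2)$ does satisfy the linearization of \eqref{pde2} around $\lambda=0$ with boundary data $\mathrm{sign}(x_1)\mathrm{sign}(x_2)$, once \eqref{sec4.3:lem1:eq1} is used to identify the drift coefficient --- but it presupposes the classical differentiability in $\lambda$ that is precisely the point at issue, so it cannot substitute for the approximation argument.
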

The second one can be found in Lemma 2 in \cite{C15}.
\begin{lemma}
	\label{lem-1}
	Let $\DD$ be a metric space and $I$ be an interval with right open edge. Let $f$ be a real-valued function on $\DD\times I$ and $f_0(\lambda)=\sup_{u\in \DD}f(u,\lambda)$ for $\lambda \in I$. Suppose that there exists a $\DD$-valued continuous function $u(\lambda)$ on $I$ such that $f_0(\lambda)=f(u(\lambda),\lambda)$ and $\partial_\lambda f$ is continuous on $\DD\times I$. Then $f_0(\lambda)$ is right-differentiable with the derivative given by $\partial_\lambda f(u(\lambda),\lambda)$ for all $\lambda\in I$.
\end{lemma}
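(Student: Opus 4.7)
The plan is to prove this as a standard envelope / Danskin-type theorem by sandwiching the right-hand difference quotient $h^{-1}(f_0(\lambda_0+h)-f_0(\lambda_0))$ between two quantities that both converge to $\partial_\lambda f(u(\lambda_0),\lambda_0)$ as $h\downarrow 0$. Throughout, fix $\lambda_0 \in I$ and choose $h_0>0$ small enough that $[\lambda_0, \lambda_0+h_0]\subset I$.

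For the lower bound, since $u(\lambda_0)\in\DD$ is a feasible point in the supremum at every $\lambda$, we have $f_0(\lambda_0+h)\geq f(u(\lambda_0),\lambda_0+h)$ for $0<h\leq h_0$, while $f_0(\lambda_0)=f(u(\lambda_0),\lambda_0)$ by hypothesis. Continuity of $\partial_\lambda f$ in $\lambda$ (at the fixed first coordinate $u(\lambda_0)$) together with the fundamental theorem of calculus gives
\begin{equation*}
\frac{f_0(\lambda_0+h)-f_0(\lambda_0)}{h} \;\geq\; \frac{1}{h}\int_0^h \partial_\lambda f\bigl(u(\lambda_0),\lambda_0+s\bigr)\,ds \;\xrightarrow[h\downarrow 0]{}\; \partial_\lambda f\bigl(u(\lambda_0),\lambda_0\bigr).
\end{equation*}

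For the upper bound, use that $f_0(\lambda_0)\geq f(u(\lambda_0+h),\lambda_0)$ (again because $u(\lambda_0+h)$ is feasible in the sup defining $f_0(\lambda_0)$), while $f_0(\lambda_0+h)=f(u(\lambda_0+h),\lambda_0+h)$. Hence
\begin{equation*}
\frac{f_0(\lambda_0+h)-f_0(\lambda_0)}{h} \;\leq\; \frac{f(u(\lambda_0+h),\lambda_0+h)-f(u(\lambda_0+h),\lambda_0)}{h} \;=\; \frac{1}{h}\int_0^h \partial_\lambda f\bigl(u(\lambda_0+h),\lambda_0+s\bigr)\,ds.
\end{equation*}
The remaining step is to show that the right-hand side also tends to $\partial_\lambda f(u(\lambda_0),\lambda_0)$ as $h\downarrow 0$, at which point the squeeze finishes the proof.

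The key technical input is continuity/uniform-continuity of $\partial_\lambda f$ along the path $s\mapsto (u(\lambda_0+h),\lambda_0+s)$ as $h\downarrow 0$. Continuity of $u$ on $I$ implies $K:=\{u(\lambda):\lambda\in[\lambda_0,\lambda_0+h_0]\}$ is a compact subset of $\DD$, so $\partial_\lambda f$ is uniformly continuous on the compact set $K\times[\lambda_0,\lambda_0+h_0]$. Since $u(\lambda_0+h)\to u(\lambda_0)$ as $h\downarrow 0$ and $\lambda_0+s\to\lambda_0$ uniformly in $s\in[0,h]$, the integrand $\partial_\lambda f(u(\lambda_0+h),\lambda_0+s)$ converges uniformly in $s\in[0,h]$ to $\partial_\lambda f(u(\lambda_0),\lambda_0)$, giving the desired limit of the averaged integral. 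I expect this uniform-continuity step to be the only subtle point, since the hypothesis only asserts continuity of $\partial_\lambda f$ on all of $\DD\times I$ rather than local uniform continuity, but compactness of $K$ and of $[\lambda_0,\lambda_0+h_0]$ bridges the gap. Combining the two one-sided bounds yields right-differentiability of $f_0$ at $\lambda_0$ with derivative $\partial_\lambda f(u(\lambda_0),\lambda_0)$, as claimed.
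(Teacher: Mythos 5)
Your argument is correct and complete: it is the standard Danskin/envelope-theorem sandwich, which is exactly the route taken in the reference the paper cites for this lemma (the paper itself does not reprove it, deferring to Lemma 2 of \cite{C15}). Both one-sided bounds are set up with the right inequalities, the fundamental theorem of calculus is legitimately applied since the hypothesis that $\partial_\lambda f$ exists and is continuous makes $\lambda\mapsto f(v,\lambda)$ continuously differentiable for each fixed $v$, and the squeeze closes. The only remark worth making is that your detour through compactness of $K=\{u(\lambda):\lambda\in[\lambda_0,\lambda_0+h_0]\}$ and uniform continuity on $K\times[\lambda_0,\lambda_0+h_0]$ is more than is needed: joint continuity of $\partial_\lambda f$ at the single point $(u(\lambda_0),\lambda_0)$, together with $u(\lambda_0+h)\to u(\lambda_0)$, already forces the integrand $\partial_\lambda f(u(\lambda_0+h),\lambda_0+s)$ to lie within $\eps$ of $\partial_\lambda f(u(\lambda_0),\lambda_0)$ for all $s\in[0,h]$ once $h$ is small, which is all the averaged integral requires. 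This is a stylistic simplification, not a gap.
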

We will also need the following Lemma 10 in \cite{ChenHL}.
\begin{lemma}\label{pde:lem}
	Let $0<r_0<r_1<r_2<\infty.$ Suppose that $\kappa_1,\kappa_2\in L^\infty([r_0 ,r_2]\times\mathbb{R})$ and $g\in L^\infty(\mathbb{R})$ with $\|\kappa_i\|_\infty\leq C_i$ for $i=1,2$ and $\|g\|_\infty\leq C_0$. Assume that $u$ is the classical solution to
	$$
	\partial_ru(r,x)=\partial_{xx}u(r,x)+\kappa_1(r,x)\partial_xu(r,x)+k_2(r,x),\,\,\forall (r,x)\in(r_0 ,{r_2} ]\times\mathbb{R}
	$$
	with the initial condition $u(r_0 ,x)=g(x).$ Then there exists a nonnegative continuous function $F$ on $[0,\infty)^3$ depending only on $r_0 ,r_2$ such that
	\begin{align*}
	\sup_{(r,x)\in[r_1 ,r_2]\times\mathbb{R}}|\partial_xu(r,x)|&\leq F(C_0,C_1,C_2).
	\end{align*}
\end{lemma}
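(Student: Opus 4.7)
The plan is to derive the gradient estimate from the Duhamel representation of $u$ in terms of the fundamental solution of the linear parabolic operator $\mathcal{L} := \partial_r - \partial_{xx} - \kappa_1(r,x)\partial_x$, combined with classical Gaussian heat-kernel bounds for this operator with bounded drift.

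First, I would note that $u$ solves $\mathcal{L} u = \kappa_2$ on $(r_0, r_2] \times \mathbb{R}$ with initial datum $g$. Since $\kappa_1, \kappa_2 \in L^\infty$, classical parabolic theory yields a fundamental solution $\Gamma(r,x;s,y)$ of $\mathcal{L}$ on the strip $[r_0, r_2] \times \mathbb{R}$, together with Aronson-type Gaussian upper bounds of the form
\begin{align*}
|\Gamma(r,x;s,y)| &\leq \frac{C}{\sqrt{r-s}}\, \exp\!\left(-\frac{c(x-y)^2}{r-s}\right), \\
|\partial_x \Gamma(r,x;s,y)| &\leq \frac{C}{r-s}\, \exp\!\left(-\frac{c(x-y)^2}{r-s}\right),
\end{align*}
where $C, c > 0$ depend continuously on $C_1 = \|\kappa_1\|_\infty$ and on $r_2 - r_0$ only.

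Second, Duhamel's principle gives
\begin{equation*}
u(r,x) = \int_{\mathbb{R}} \Gamma(r,x;r_0,y)\, g(y)\, dy + \int_{r_0}^{r}\!\int_{\mathbb{R}} \Gamma(r,x;s,y)\, \kappa_2(s,y)\, dy\, ds.
\end{equation*}
Differentiating in $x$ under the integral sign, invoking the gradient bound on $\Gamma$, and using the Gaussian integral identity $\int_{\mathbb{R}} e^{-c(x-y)^2/(r-s)}\, dy = \sqrt{\pi(r-s)/c}$ to integrate out $y$, one obtains
\begin{equation*}
|\partial_x u(r,x)| \leq C' C_0\, (r-r_0)^{-1/2} + C'' C_2\, \sqrt{r-r_0},
\end{equation*}
with $C', C''$ depending continuously on $C_1$; the $\sqrt{r-r_0}$ factor arises after integrating $(r-s)^{-1/2}$ in $s$ over $[r_0, r]$.

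Third, since $r \in [r_1, r_2]$ with $r_1 > r_0$, one has $(r-r_0)^{-1/2} \leq (r_1-r_0)^{-1/2}$ and $\sqrt{r-r_0} \leq \sqrt{r_2-r_0}$, so setting
\begin{equation*}
F(C_0, C_1, C_2) := C'(C_1)\, C_0\, (r_1-r_0)^{-1/2} + C''(C_1)\, C_2\, \sqrt{r_2-r_0}
\end{equation*}
yields the claimed bound, with $F$ nonnegative and continuous in $(C_0, C_1, C_2)$ by the continuous dependence of the Aronson constants on $C_1$. The main technical obstacle is establishing the Gaussian derivative bound on $\Gamma$ with constants depending only on $\|\kappa_1\|_\infty$ rather than on higher-order norms of $\kappa_1$. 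For smooth $\kappa_1$ this is classical parabolic theory; for general $L^\infty$ drifts one approximates $\kappa_1, \kappa_2$ by smooth functions with the same $L^\infty$ bounds, derives a uniform gradient bound for the regularized classical solutions, and passes to the limit using the classical-solution hypothesis and uniqueness.
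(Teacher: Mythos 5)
Your proposal is sound in outline but follows a genuinely different route from the paper, which in fact does not prove this lemma at all: it is imported verbatim as Lemma 10 of \cite{ChenHL}, and the only proof content supplied here is the remark that the continuity assumption on $r\mapsto\|\partial_x u(r,\cdot)\|_\infty$ made in \cite{ChenHL} is dispensable because it enters only through the Gronwall step. The argument in \cite{ChenHL} works with the mild (Duhamel) formulation relative to the \emph{pure} heat semigroup, treating $\kappa_1\partial_x u+\kappa_2$ as a source: differentiating the representation and using $\|\partial_x e^{t\Delta}f\|_\infty\leq Ct^{-1/2}\|f\|_\infty$ gives a weakly singular integral inequality for $w(r)=\|\partial_x u(r,\cdot)\|_\infty$, which a singular Gronwall lemma closes. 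You instead build the fundamental solution $\Gamma$ of the full operator with drift and invoke Aronson-type bounds on $\Gamma$ and $\partial_x\Gamma$ with constants depending only on $\|\kappa_1\|_\infty$; the ensuing computation (Gaussian integral in $y$, then integration of $(r-s)^{-1/2}$ in $s$, then $r\geq r_1>r_0$ to kill the initial-layer singularity) is correct and yields the stated $F$. The main caveat is that you have not avoided the hard step so much as relocated it: for merely bounded measurable $\kappa_1$ the gradient bound on $\Gamma$ is precisely what the parametrix iteration $\Gamma=G+G*(\kappa_1\partial_x\Gamma)$ delivers, and that iteration converges by essentially the same singular Gronwall mechanism, so your proof is not more elementary, only differently packaged. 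Two smaller points you should make explicit: (i) identifying the given classical solution $u$ with the Duhamel representation requires uniqueness of bounded solutions with $L^\infty$ data (true by the maximum principle, and harmless in the application where $u=\partial_x\Psi_\gamma$ or $\partial_\lambda\Psi_\gamma$ is bounded by $2$, but it should be said); and (ii) in the closing approximation argument, passing to the limit in the term $\kappa_{1,n}\partial_x u_n$ needs local compactness of $\{\partial_x u_n\}$, i.e.\ a uniform modulus of continuity for the gradients on $[r_1,r_2]\times\mathbb{R}$, not just the uniform sup bound, before uniqueness identifies the limit with $u$.
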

The statement of this result in \cite{ChenHL} also assumed the continuity of $r\mapsto\|\partial_xu(r,\,\cdot\,)\|_\infty$. To avoid this,
we note that this assumption was used in \cite{ChenHL} only in the application of Gronwall's inequality, which also holds without it,
so this result in \cite{ChenHL} in fact did not need this assumption.

\medskip
{\noindent \bf Proof of Proposition \ref{lem2}.} 
First, we establish the differentiability of $\Psi_\gamma(\lambda,0,0)$. Again, recall that $\Psi_\gamma(\lambda, q,x)=\Gamma_\gamma(\lambda,q,x,x).$ If we denote\begin{align*}
F_\gamma^{q}(\lambda,u)&=\e\left[\Psi_\gamma\Bigl(\lambda,q,\int_0^q\xi''\gamma udw+\int_0^q\sqrt{\xi''}d{W}\Bigr)-\frac{1}{2}\int_0^q\xi''udw\right],
\end{align*}
for  $u\in D[q]$ then, by Proposition \ref{prop2},
\begin{align*}
\Psi_\gamma(\lambda,0,0)&=\max_{u\in D[q]} F_\gamma^{q}(\lambda,u).
\end{align*}
To compute the derivative of this in $\lambda$, we would like to apply Lemma \ref{lem-1} with $I=\Reals$, the metric space $\DD=(D[q],\|\,\cdot\,\|_q)$, and $u(\lambda)= u_\gamma^\lambda$ from Proposition \ref{prop2}. First of all, the proof of Lemma 6 in \cite{ChenHL} shows that $\Psi_\gamma(\lambda, q,x)=\Gamma_\gamma(\lambda,q,x,x)$ is differentiable in $\lambda$, the derivative $\partial_\lambda \Psi_\gamma(\lambda, q,x)$ is continuous in $(\lambda,x)$, and $|\partial_\lambda \Psi_\gamma(\lambda, q,x)|\leq 1.$ In particular, this implies that we can interchange the derivative and integral and write
\begin{align}
\partial_\lambda F_\gamma^q(\lambda,u)&=\e\left[\partial_\lambda\Psi_\gamma\Bigl(\lambda,q,\int_0^q\xi''\gamma u\,dw+\int_0^q\sqrt{\xi''}\,d{W}\Bigr)\right].
\label{pLfg}
\end{align}
For $u(\lambda)= u_\gamma^\lambda$ from Proposition \ref{prop2} with $x=0$ and $s=0$, 
$$
\int_0^q\xi''\gamma u_\gamma^\lambda\,dw+\int_0^q\sqrt{\xi''}\,d{W} = X_{\gamma}^{\lambda}(q),
$$
and, therefore, if we can show that $\partial_\lambda F_\gamma^q(\lambda,u)$ is continuous in $(\lambda, u)$ and $u_\gamma^\lambda$ is continuous in $\lambda$ on $\DD$ then Lemma \ref{lem-1} will imply that the right derivative
\begin{align}\label{lem2:proof:eq1}
\frac{d}{d\lambda}\Psi_\gamma(\lambda,0,0)
=
\partial_\lambda F_\gamma^q(\lambda,u_\gamma^\lambda)=\e\bigl[\partial_\lambda\Psi_\gamma\bigl(\lambda,q,X_{\gamma}^{\lambda}(q)\bigr)\bigr].
\end{align} 
Since $\partial_\lambda \Psi_\gamma$ is continuous in $(\lambda,x)$ and $|\partial_\lambda \Psi_\gamma|\leq 1,$ (\ref{pLfg}) implies that $\partial_\lambda F_\gamma^q$ is continuous in $(\lambda,u).$ 

It remains to show that $u_\gamma^\lambda$ is continuous in $\lambda$ on $\DD$. First, we note that $\Gamma_\gamma(\lambda,q,x,x)$ admits a similar expressions as Propositions \ref{cor1} and \ref{prop2}, which can be seen from Theorem 7 in \cite{ChenHL} with $(r,s)=(q,1)$ and $t=1$. This representation makes it clear how properties of the boundary condition $g(\lambda,\bx)$ in (\ref{eq1}) are propagated to $\Psi_\gamma(\lambda,q,x).$ For example, the fact that $g(\lambda,\bx)$ is $1$-Lipschitz in $\lambda$ and $2$-Lipschitz in $\bx$ implies that $|\partial_\lambda\Psi_\gamma(\lambda,q,x)|\leq 1$ and $|\partial_x\Psi_\gamma(\lambda,q,x)|\leq 2$. Using (\ref{prop2:eq2}), we get $|\partial_\lambda\Psi_\gamma(\lambda,s,x)|\leq 1$ and $|\partial_x\Psi_\gamma(\lambda,s,x)|\leq 2$ for all $s\in [0,q].$ From this, it can be shown that $\partial_x\Psi_\gamma(\lambda,s,x)$ is uniformly Lipschitz in $(\lambda,x)$ over all $s\in[0,q]$, as follows.

If $\gamma(ds)$ is an atomic measure with finitely many jumps then the solution $\Psi_\gamma$ of (\ref{pde3}) can be explicitly found via the Cole-Hopf transformation. Moreover, $A:=\partial_x\Psi_\gamma$ or $A:=\partial_\lambda\Psi_\gamma$ satisfies
\begin{align*}
\partial_t\Delta&=-\frac{\xi''}{2}\bigl(\partial_{xx}\Delta+2\gamma(\partial_x\Psi_\gamma)(\partial_x\Delta)\bigr)
\end{align*}
on $\mathbb{R}\times[0,q]\times\mathbb{R}$ with the boundary condition $A(\lambda,q,x)=\partial_x\Psi_\gamma(\lambda,q,x)$ or $\partial_\lambda\Psi_\gamma(\lambda,q,x)$. Since $|A(\lambda,q,x)|\leq 2$ and $|\partial_x\Psi_\gamma(\lambda,s,x)|\leq 2$ for $s\in[0,q]$, one may apply Lemma \ref{pde:lem} to obtain 
\begin{align}
\label{add:eq1}
\sup_{(\lambda,s,x)\in \mathbb{R}\times[0,q]\times \mathbb{R}}\bigl|\partial_x A(\lambda,s,x)\bigr|\leq F\bigl(\gamma(q)\bigr),
\end{align}
where $F$ is some nonnegative continuous function on $[0,\infty)$. Therefore, $\partial_x\Psi_\gamma(\lambda,s,x)$ is $F\bigl(\gamma(q)\bigr)$-Lipschitz in $\lambda$ and $x$ uniformly over $s\in [0,q]$, for atomic $\gamma(ds)$. For arbitrary $\gamma$ on $[0,q]$, we define the solution $\Psi_\gamma(\lambda,s,x)$ of (\ref{pde3}) with the boundary condition $\Gamma_\gamma(\lambda,q,x,x)$ via approximation of $\gamma$ by atomic $\gamma_n$ and ensuring, using the Arzela-Ascoli theorem and Cantor's diagonalization process, that $\Psi_{\gamma_n}(\lambda,s,x)$ and all its partial derivatives in $x$ and partial derivative in $\lambda$ converge to some $\Psi_{\gamma}(\lambda,s,x)$ and its partial derivatives. The  limiting $\Psi_\gamma(\lambda,s,x)$ is the solution of the equation (\ref{pde3}) with the boundary condition $\Gamma_\gamma(\lambda,q,x,x)$ and (\ref{add:eq1}) still holds, i.e. $\partial_x\Psi_\gamma(\lambda,s,x)$ is $F\bigl(\gamma(q)\bigr)$-Lipschitz in $\lambda$ and $x$ uniformly over $s\in [0,q]$. (The uniqueness of solutions follows from the same argument as in \cite{JagTob}.)

Finally, the equation \eqref{cor1:eq22} with $x=0$ and $s=0$ and (\ref{add:eq1}) imply that, for $0\leq w\leq q,$
$$
|X_{\gamma}^\lambda(w) - X_{\gamma}^{\lambda'}(w)|
\leq 
\xi''(q)\gamma(q)F(\gamma(q))\Bigl(|\lambda-\lambda'|w+ \int_0^w |X_{\gamma}^\lambda(t)-X_{\gamma}^{\lambda'}(t)|\,dt\Bigr).
$$
An application of Gronwall's inequality shows that $\lambda\mapsto X_\gamma^\lambda$ is continuous with respect to the sup-norm, and \eqref{add:eq1} implies that $\lambda \mapsto u_{\gamma}^\lambda(w)=\partial_x \Psi_\gamma(\lambda, w,X_{\gamma}^\lambda(w))$ is also continuous with respect to the sup-norm and, therefore, the norm $\|\,\cdot\,\|_q$ defined in \eqref{cor1:eq2}. As a result, $\lambda\mapsto(\lambda,u_\gamma^\lambda)$ is continuous and  (\ref{lem2:proof:eq1}) holds. Note that this right derivative is also continuous in $\lambda.$ It is well known (see e.g. \cite{B94}) that a function with continuous right derivative on an open interval  is continuously differentiable on this interval. Thus, $\Psi_\gamma(\lambda,0,0)$ is continuously differentiable. Finally, the verification of \eqref{lem2:eq1} follows directly from \eqref{lem2:proof:eq1} and \eqref{sec4.3:lem1:eq2}.
\qed

\subsection{Proof of Theorem \ref{Thm2}}

We now turn to the main proof of Theorem \ref{Thm2} by using the GT bound \eqref{RSB}. First of all, from the optimality of the Parisi measure $\gamma_P$, it is known that the points in the support of the Parisi measure must satisfy a system of consistency equations (see \cite[Proposition 3]{ChenHL}).

\begin{proposition}\label{prop1}
	For any $s\in \mathrm{supp}\,\gamma_P(ds)$,
	\begin{align}
	\begin{split}
	\label{prop1:eq1}
	\e\bigl(\partial_x\Phi_{\gamma_{P} }(s,X_{{\gamma_P} }(s))\bigr)^2&=s,
	\end{split}\\
	\begin{split}
	\label{prop1:eq2}
	\xi''(s)\e\bigl(\partial_{xx}\Phi_{\gamma_{P} }(s,X_{\gamma_{P} }(s))\bigr)^2&\leq 1,
	\end{split}
	\end{align}
	where $X_{\gamma_{P} }=(X_{\gamma_{P} }(w))_{0\leq w<1}$ is defined as in Proposition \ref{cor1} with $q=1,$ $s=0$ and $x=0.$
\end{proposition}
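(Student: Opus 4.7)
The strategy is to derive \eqref{prop1:eq1} and \eqref{prop1:eq2} as first-order optimality conditions for the convex Parisi functional $\mathcal{P}$ at its unique minimizer $\gamma_P$, using the variational representation in Proposition \ref{cor1} together with a martingale identity hidden in the Parisi PDE.

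First I would compute the Gateaux derivative of $\Phi_\gamma(0,0)$ at $\gamma=\gamma_P$ in an admissible direction $\eta$ (one for which $\gamma_P+t\eta\in\mathcal{U}$ for small $t\geq 0$). Applying \eqref{cor1:eq1} with $s=0$, $q=1$, $x=0$ together with an envelope argument (valid because $u_{\gamma_P}$ is the unique maximizer), only the explicit $\gamma$-dependence of the objective contributes. Writing $M(w):=\partial_x\Phi_{\gamma_P}(w,X_{\gamma_P}(w))$, which equals $u_{\gamma_P}(w)$ along the optimal trajectory by Proposition \ref{cor1}, and using $\partial_x\Phi_{\gamma_P}(1,\cdot)=\mathrm{sgn}(\cdot)$, a short calculation gives
\[
\partial_t\big|_{t=0}\Phi_{\gamma_P+t\eta}(0,0)=\mathbb{E}\Bigl[M(1)\int_0^1 \xi''(w)\eta(w)M(w)\,dw\Bigr]-\tfrac{1}{2}\mathbb{E}\int_0^1 \xi''(w)\eta(w)M(w)^2\,dw.
\]
Differentiating \eqref{pde} in $x$ and applying It\^o's formula to $M(w)$ along \eqref{cor1:eq2} shows that the drift of $M$ vanishes identically, so $M$ is a martingale with $M(0)=0$ (by the $x\mapsto -x$ symmetry of \eqref{pde}) and $dM(w)=\partial_{xx}\Phi_{\gamma_P}(w,X_{\gamma_P}(w))\sqrt{\xi''(w)}\,dW$. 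Conditioning on $\mathcal{G}_w$ yields $\mathbb{E}[M(1)M(w)]=\mathbb{E}[M(w)^2]$, so the two terms above combine and, together with the derivative of $-\tfrac12\int_0^1 s\xi''(s)\gamma(s)\,ds$, produce the clean formula
\[
\partial_t\big|_{t=0}\mathcal{P}(\gamma_P+t\eta)=\tfrac{1}{2}\int_0^1 \xi''(s)\eta(s)A(s)\,ds,\qquad A(s):=\mathbb{E}[M(s)^2]-s.
\]

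To deduce \eqref{prop1:eq1}, I would reformulate first-order optimality in terms of the signed perturbation measure $\nu=d\eta$ on $[0,1)$: admissible $\nu$ are exactly those whose negative part $\nu^-$ is absolutely continuous with respect to $d\gamma_P$. A Fubini computation rewrites the variation as $\tfrac{1}{2}\int_0^1 B(r)\,d\nu(r)$, where $B(r):=\int_r^1 \xi''(s)A(s)\,ds$. Testing against nonnegative $\nu$ yields $B\geq 0$ pointwise (using continuity of $B$), and testing against $\nu=-\nu^-$ with $\nu^-$ supported in $\mathrm{supp}\,\gamma_P(ds)$ gives $\int B\,d\nu^-\leq 0$, which together with $B\geq 0$ forces $B\equiv 0$ on $\mathrm{supp}\,\gamma_P(ds)$. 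Since $B'=-\xi''A$ and $\xi''>0$, this yields $A\equiv 0$ on the support, establishing \eqref{prop1:eq1}. For \eqref{prop1:eq2}, the martingale representation with $M(0)=0$ gives
\[
\mathbb{E}[M(s)^2]=\int_0^s \xi''(w)\mathbb{E}\bigl[\partial_{xx}\Phi_{\gamma_P}(w,X_{\gamma_P}(w))^2\bigr]\,dw,
\]
hence $A'(s)=\xi''(s)\mathbb{E}[\partial_{xx}\Phi_{\gamma_P}(s,X_{\gamma_P}(s))^2]-1$. Since $B\geq 0$ globally while $B=0$ on the support, $B$ is locally nondecreasing just to the right of any support point $s_0$, which forces $A\leq 0$ on $(s_0,s_0+\varepsilon)$; combined with $A(s_0)=0$, this gives $A'(s_0^+)\leq 0$, which rearranges to \eqref{prop1:eq2}.

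The main technical obstacle is the rigorous justification of the Gateaux differentiation and the It\^o calculation for general $\gamma_P\in\mathcal{U}$, since weak solutions of \eqref{pde} are only guaranteed to possess a bounded weak $x$-derivative and $\partial_{xx}\Phi_{\gamma_P}$ need not exist classically. I would handle this by first establishing all identities for atomic $\gamma_n$ with finitely many jumps, where the Cole--Hopf transformation yields smooth classical solutions and the envelope and martingale computations are transparent, and then passing to the limit $\gamma_n\to\gamma_P$ using the uniform gradient bounds of Lemma \ref{pde:lem} together with an Arzel\`a--Ascoli and diagonalization argument in the style of the proof of Proposition \ref{lem2} and of \cite{ChenAufP,ChenHL}.
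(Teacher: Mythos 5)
The paper does not actually prove Proposition \ref{prop1}: it is quoted verbatim from \cite[Proposition 3]{ChenHL}, so there is no internal proof to compare against. Your proposal essentially reconstructs the argument of that reference: directional differentiability of the zero-temperature Parisi functional via the stochastic control representation plus a Danskin-type envelope lemma (exactly the role Lemma \ref{lem-1} plays elsewhere in the paper), the martingale property of $M(w)=\partial_x\Phi_{\gamma_P}(w,X_{\gamma_P}(w))$ with $dM=\sqrt{\xi''}\,\partial_{xx}\Phi_{\gamma_P}\,dW$ and $M(0)=0$, and first-order optimality over the cone of admissible perturbations of $\gamma_P$. The resulting formula $\partial_t\big|_{t=0^+}\mathcal{P}(\gamma_P+t\eta)=\tfrac12\int_0^1\xi''\eta\,A\,ds$ with $A(s)=\e[M(s)^2]-s$, and the deduction of \eqref{prop1:eq1} from $B\geq 0$ together with $B=0$ on $\mathrm{supp}\,\gamma_P(ds)$, are correct. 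Two small imprecisions: admissibility of the perturbation $-\nu^-$ requires $\nu^-\leq C\,d\gamma_P$ with bounded density, not mere absolute continuity (taking $\nu^-=1_E\,d\gamma_P$ suffices for your purpose); and at $s=0$ the step ``$\xi''>0$'' fails for $K\geq 4$, but both identities are trivial there since $M(0)=0$ and $\xi''(0)=0$.

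The one step that does not work as written is the passage to \eqref{prop1:eq2}. From $B\geq 0$ and $B(s_0)=0$ you cannot conclude that $B$ is locally nondecreasing to the right of $s_0$, nor that $A\leq 0$ pointwise on $(s_0,s_0+\eps)$: a nonnegative $C^1$ function vanishing at a point may oscillate arbitrarily to its right. The correct deduction, using only facts you already have, is: $B(r)\geq B(s_0)=0$ gives $\int_{s_0}^{r}\xi''(s)A(s)\,ds=B(s_0)-B(r)\leq 0$ for all $r>s_0$; since $A(s_0)=0$ and $A'(s)=\xi''(s)\e\bigl[\partial_{xx}\Phi_{\gamma_P}(s,X_{\gamma_P}(s))^2\bigr]-1$ is continuous near $s_0$, the assumption $A'(s_0)>0$ would force $A>0$, hence $\int_{s_0}^{r}\xi''A>0$, on a right-neighbourhood of $s_0$ (where $\xi''>0$), a contradiction; hence $A'(s_0)\leq 0$, which is \eqref{prop1:eq2}. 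With that repair, and granting the deferred regularity and approximation work needed to justify the envelope step and the It\^{o} computations for general $\gamma_P\in\mathcal{U}$ (which is the genuinely hard part and occupies much of \cite{C15} and \cite{ChenHL}), your argument is sound and matches the standard proof of the cited result.
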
 
A key consequence of these equations is the following lemma where the condition $K \geq 4$ is used.
\begin{lemma}
	\label{add:lem2}
	For even $K\geq 4,$ there exists $c\in (0,1)$ such that
	\begin{align*}
	\e\bigl(\partial_x\Phi_{\gamma_{P} }(s,X_{\gamma_P }(s))\bigr)^2<s
	\end{align*} 	
	for all $s\in (0,c).$
\end{lemma}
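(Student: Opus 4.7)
The plan is to use that $\xi''(s)=K(K-1)s^{K-2}$ vanishes at $s=0$ to order $K-2\ge 2$ when $K\ge 4$. Let $f(s):=\e(\partial_x\Phi_{\gamma_P}(s,X_{\gamma_P}(s)))^2$. The goal is to show that $f(s)=O(s^{K-1})=o(s)$ as $s\to 0^+$, hence $f(s)<s$ on some open interval $(0,c)$.

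The first step is a martingale identity for $Y(w):=\partial_x\Phi_{\gamma_P}(w,X_{\gamma_P}(w))$. Differentiating the Parisi PDE \eqref{pde} once in $x$ gives $\partial_s(\partial_x\Phi_{\gamma_P})=-\tfrac{\xi''}{2}(\partial_{xxx}\Phi_{\gamma_P}+2\gamma_P\,\partial_x\Phi_{\gamma_P}\,\partial_{xx}\Phi_{\gamma_P})$. Applying It\^o's formula along the SDE \eqref{cor1:eq2}, the drifts coming from $\partial_s(\partial_x\Phi_{\gamma_P})$ cancel exactly against the drifts from the transport term of $X_{\gamma_P}$, leaving
\begin{equation*}
dY(w)=\sqrt{\xi''(w)}\,\partial_{xx}\Phi_{\gamma_P}(w,X_{\gamma_P}(w))\,dW(w).
\end{equation*}
Since $|\partial_x\Phi_{\gamma_P}|\le 1$ (inherited from the $1$-Lipschitz terminal datum $|x|$), $Y$ is a bounded martingale. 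The terminal condition $\Phi_{\gamma_P}(1,x)=|x|$ is even in $x$, so by uniqueness $\Phi_{\gamma_P}(s,\cdot)$ is even for every $s<1$; combined with smoothness in $x$ for $s<1$ this yields $\partial_x\Phi_{\gamma_P}(0,0)=0$, and because $X_{\gamma_P}(0)=0$ we get $Y(0)=0$. The It\^o isometry then reads
\begin{equation*}
f(s)=\int_0^s\xi''(w)\,\e\bigl(\partial_{xx}\Phi_{\gamma_P}(w,X_{\gamma_P}(w))\bigr)^2\,dw.
\end{equation*}

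The second step is an a priori bound on $\partial_{xx}\Phi_{\gamma_P}$ away from $s=1$. Fix any $c_0\in(0,1)$. On $[0,c_0]$ the nondecreasing RCLL function $\gamma_P$ is bounded by $\gamma_P(c_0)<\infty$, and $\partial_x\Phi_{\gamma_P}$ is globally bounded by $1$. Differentiating \eqref{pde} once more in $x$ produces a linear parabolic equation for $\partial_x\Phi_{\gamma_P}$ with bounded coefficients, whose terminal data $\partial_x\Phi_{\gamma_P}(c_0,\cdot)$ is smooth and bounded since $c_0<1$. Applying Lemma \ref{pde:lem} after the change of variables $r=c_0-w$, together with the atomic-approximation and Cantor-diagonalization recipe from the proof of Proposition \ref{lem2}, yields $M:=\sup_{w\in[0,c_0]}\|\partial_{xx}\Phi_{\gamma_P}(w,\cdot)\|_\infty<\infty$. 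Inserting this into Step 1 gives, for $s\in[0,c_0]$,
\begin{equation*}
f(s)\le M^2\int_0^s K(K-1)w^{K-2}\,dw= M^2\, K\, s^{K-1}.
\end{equation*}
For $K\ge 4$ the exponent satisfies $K-1\ge 3>1$, so $M^2 K s^{K-2}<1$ for all sufficiently small $s>0$, and any $c\in(0,c_0]$ with $M^2 K c^{K-2}<1$ works. The main obstacle is precisely the uniform bound $M<\infty$: the terminal datum $|x|$ has a Dirac mass in its weak second derivative, so higher-order estimates blow up as $s\uparrow 1$; but we only need regularity on the window $[0,c_0]$, where parabolic smoothing suffices. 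Note that the argument genuinely requires $K\ge 4$: for $K=2$ one has $\xi''\equiv 2$, the factor $s^{K-2}$ disappears, and one cannot conclude $f(s)=o(s)$.
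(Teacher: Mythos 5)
Your proof is correct and follows essentially the same route as the paper's: the martingale representation of $\partial_x\Phi_{\gamma_P}(w,X_{\gamma_P}(w))$ together with the It\^{o} isometry gives $f(s)=\int_0^s\xi''(w)\,\e(\partial_{xx}\Phi_{\gamma_P}(w,X_{\gamma_P}(w)))^2dw$, and the uniform bound on $\partial_{xx}\Phi_{\gamma_P}$ on $[0,c_0]\times\Reals$ (the paper cites Proposition 2(ii) of \cite{ChenHL} rather than re-deriving it from Lemma \ref{pde:lem}) combined with $\xi''(0)=0$ for $K\geq 4$ yields the strict inequality. Your version is slightly more quantitative ($f(s)=O(s^{K-1})$ versus the paper's observation that the integrand is eventually $<1$), but the mechanism is identical.
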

\textbf{\noindent Proof.}
Note that $\xi(s)=s^K$. By Proposition 2(ii) in \cite{ChenHL}, for any $0<q<1$, there exists a non-negative continuous function $F$ on $[0,\infty)$ depending only on $\xi$ and $q$ such that
\begin{align*}
\sup_{(s,x)\in [0,q]\times\mathbb{R}}\bigl|\partial_{xx}\Phi_{\gamma_P}(s,x)\bigr|\leq F(\gamma_P(q)).
\end{align*}
Since  $\xi''(0)=0$ for $K\geq 4$, this and \eqref{prop1:eq2} imply that there exists some $c\in (0,1)$ such that
\begin{align}\label{add:lem3:proof:eq1}
\xi''(s)\e\bigl(\partial_{xx}\Phi_{\gamma_{P} }(s,X_{\gamma_{P} }(s))\bigr)^2&< 1
\end{align}
for all $s\in(0,c).$ It remains to appeal to the following well known relationship between the left hand sides of the equations (\ref{prop1:eq1}) and (\ref{prop1:eq1}). By \cite[Lemma 3]{ChenHL}, 
\begin{align*}
&\partial_x\Phi_{\gamma_P}(s,X_\gamma(s))-\partial_x\Phi_{\gamma_P} (s',X_\gamma(s'))\\
&=\int_{s'}^s\xi''  (w)^{1/2}\partial_{xx}\Phi_{\gamma_P} (w,X_{\gamma_P}(w))dW(w)
\end{align*}
for all $0<s'<s<1$, and application of It\^{o}'s formula yields
\begin{align*}
&\e\bigl(\partial_{x}\Phi_{\gamma_{P} }(s,X_{\gamma_{P} }(s))\bigr)^2-\e\bigl(\partial_{x}\Phi_{\gamma_{P} }(s',X_{\gamma_{P} }(s'))\bigr)^2\\
&=\int_{s'}^s\xi''(w)\e\bigl(\partial_{xx}\Phi_{\gamma_P} (w,X_{\gamma_P}(w))\bigr)^2dw.
\end{align*}
In particular, letting $s'=0$ and using \eqref{add:lem3:proof:eq1}, 
\begin{align*}
\e\bigl(\partial_{x}\Phi_{\gamma_{P} }(s,X_{\gamma_{P} }(s))\bigr)^2&<s
\end{align*}
for any $s\in (0,c).$ 	This completes our proof.
\qed

\medskip
Let $c$ be the constant obtained in Lemma \ref{add:lem2}. For any fixed $q\in (0,c),$ we define 
$$
\gamma_q(s)=\frac{\gamma_P}{2}1_{[0,q)}+\gamma_P1_{[q,1)}\in \mathcal{U},
$$
where $1_A$ stands for the indicator function on the set $A.$ The definition of the boundary condition in (\ref{pde3}), $\Psi_\gamma(\lambda, q,x)=\Gamma_\gamma(\lambda,q,x,x)$, together with \eqref{sec4.3:lem1:eq1} yields 
$$
\Psi_{\gamma_q}(0,q,x)= \Gamma_{\gamma_q}(0,q,x,x) = 2\Phi_{\gamma_P}(q,x).
$$
Since $\gamma_q=\gamma_P/2$ on the interval $[0,q),$ comparing the equations (\ref{pde}) and \eqref{pde3} we see that
\begin{align}\label{add:eq4}
\Psi_{\gamma_q}(0,s,x)&=2\Phi_{\gamma_P}(s,x)
\,\,\mbox{ for }\,\, s\in[0,q].
\end{align}
Moreover,
\begin{align*}
\int_0^1s\xi''(s)\gamma_q(s)ds+\int_0^{q}s\xi''(s)\gamma_q(s)ds&=\int_0^1s\xi''(s)\gamma_P(s)ds,
\end{align*}
and, as a result,
\begin{align}
\label{add:eq3}
\mathcal{T}_q(0,\gamma_q)&=2\PP(\gamma_P)=2ME.
\end{align}
Next, recall $X_{\gamma_q}$ from \eqref{cor1:eq2} and $X_{\gamma_q}^0$ from \eqref{cor1:eq22} both with $x=0$ and $s=0$. Since on the interval $[0,q]$ we have $\gamma_q=\gamma_P/2$ and $\partial_x\Psi_{\gamma_q}(0,s,x)=2\partial_x\Phi_{\gamma_P}(s,x),$ \eqref{cor1:eq22} becomes
\begin{align*}
dX_{\gamma_q}^0&=\xi''\gamma_P\partial_x\Phi_{\gamma_q}(s,X_{\gamma_q}^0)ds+\sqrt{\xi''}dW.
\end{align*}
In other words, $X_{\gamma_q}$ and $X_{\gamma_q}^0$ are defined by the same equation on $[0,q]$ and, by the uniqueness of the strong SDE solution, we see that $X_{\gamma_q}^0=X_{\gamma_P}.$ Consequently, \eqref{lem2:eq1} implies
\begin{align*}
\partial_\lambda\Psi_{\gamma_q}(0,0,0)=\e \partial_x\Phi_{\gamma_P}(q,X_{\gamma_P}(q))^2
\end{align*}
and, by Lemma \ref{add:lem2},
\begin{align*}
\partial_\lambda \mathcal{T}_q(0,\gamma_q)
=
\partial_\lambda\Psi_{\gamma_q}(0,0,0) - q
=\e \partial_x\Phi_{\gamma_P}(q,X_{\gamma_P}(q))^2-q<0.
\end{align*}
This assures that, for all $q\in(0,c),$
\begin{align*}
\inf_{(\lambda,\gamma)\in \mathbb{R}\times\mathcal{U}}\mathcal{T}_q(\lambda,\gamma)<2ME.
\end{align*}
Since $\mathcal{T}_q(\lambda,\gamma)$ is continuous in $q$, the infimum on the left-hand side is an upper semi-continuous function of $q$ and, therefore, there exist $0<a<b<c$ and $\eta>0$ such that
\begin{align*}
\inf_{(\lambda,\gamma)\in \mathbb{R}\times\mathcal{U}}\mathcal{T}_q(\lambda,\gamma)<2ME-2\eta
\end{align*}
for all $q\in[a,b].$
Consequently, from \eqref{RSB} and the usual Gaussian concentration of measure, 
\begin{align*}
\e\MCE_N\bigl([a,b]\bigr)&\leq 2\e\ME_N-\eta
\end{align*}
for large enough $N$. Note that, since the Hamiltonian $H(\sigma)$ is symmetric for even $K$, i.e. $H(\sigma)=H(-\sigma)$, we see that this also implies that
\begin{align*}
\e\MCE_N\bigl([-b,-a]\cup[a,b]\bigr)&\leq 2\e\ME_N-\eta.
\end{align*}
This finishes our proof.
\qed

\section*{Acknowledgements} 
The authors thank a referee for careful reading and suggestions regarding the presentation of the paper.
D.G.~and M.R.~also thank organizers of the AIM workshop ``Phase transitions in randomized computational problems,"
where some of this work was conducted.

\end{document}